\newtheorem{theorem}{Theorem}[section]
\newtheorem*{theorem*}{Theorem}
\newtheorem{corollary}[theorem]{Corollary}
\newtheorem*{corollary*}{Corollary}
\newtheorem{proposition}[theorem]{Proposition}
\newtheorem{lemma}[theorem]{Lemma}
\newtheorem*{maintheorem*}{Theorem}
\theoremstyle{definition}
\newtheorem{definition}[theorem]{Definition}
\begin{document}

\dtitle[Harmonic Functions on Groups]{Harmonic Functions on Compactly Generated Groups}
\dauthor[Darren Creutz]{Darren Creutz}{creutz@usna.edu}{U.S.~Naval~Academy}{Supported in part by a Naval Academy Research Council grant}
\datewritten{11 January 2022}
\subjclass{Primary 22D05 Secondary 22D10}

\dabstract{%
A compactly generated group is noncompact if and only if it admits a nonconstant harmonic function (for some, equivalently for every, reasonable measure).

This generalizes the known fact that a finitely generated group is infinite if and only if it admits a nonconstant harmonic function (for some, equivalently every, reasonable measure).
}

\makepreprint

\section*{Introduction}

Let $\Gamma$ be a finitely generated group and $S$ be a finite generating set.  A function $f : \Gamma \to \mathbb{R}$ is harmonic when for all $\gamma_{0} \in \Gamma$ we have $\frac{1}{|S|}\sum_{s\in S} f(\gamma_{0}s) = f(\gamma_{0})$.  It is a well-known result, going back to the work of Mok \cite{mok} and Kleiner \cite{kleiner}, that every infinite finitely generated group admits a nonconstant harmonic function; the converse, that a group admitting such a function is infinite, is immediate.

More recently, Tointon \cite{tointon} considered functions which are harmonic with respect to weighted measures: if $\mu : \Gamma \to [0,1]$ is a probability measure ($\sum \mu(\gamma) = 1$) that is symmetric ($\mu(\gamma^{-1}) = \mu(\gamma)$) with support generating the group then a $\mu$-harmonic $f : \Gamma \to \mathbb{R}$ is a function such that $\sum f(\gamma_{0}\gamma)\mu(\gamma) = f(\gamma_{0})$.  Tointon showed that $\Gamma$ is infinite if and only if there is some $\mu$ for which there is a nonconstant $\mu$-harmonic function if and only if there is a nonconstant $\mu$-harmonic function for every reasonable $\mu$.

We generalize this to the compactly generated case.  The fundamental ideas used are the same as those pioneered by Mok and refined by Kleiner (and indeed their proofs can be adapted for $\mu$-harmonic functions in the finitely generated case); our goal here is to present the complete argument collected in one place and to show how it extends to the compactly generated case:

\begin{maintheorem*}
A locally compact second countable compactly generated group $G$ is noncompact if and only if some (equivalently, every) reasonable probability measure $\mu$ on $G$ admits a nonconstant $\mu$-harmonic function.  Moreover, the set of such functions intersects nontrivially with the set of Lipschitz functions or the set of bounded functions (possibly with both).
\end{maintheorem*}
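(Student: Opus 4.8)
The plan is to dispose of the elementary converse first and then organize the main implication around a single dichotomy. If $G$ is compact, fix its Haar probability measure and observe that a continuous $\mu$-harmonic $f$ attains its maximum at some $x_0$; since $f(x_0)=\int f(x_0 g)\,d\mu(g)$ and $f\le f(x_0)$, equality forces $f\equiv f(x_0)$ on $x_0\,\mathrm{supp}(\mu)$, and iterating over the (dense) group generated by $\mathrm{supp}(\mu)$ gives $f$ constant by continuity. The theorem then reduces to producing a nonconstant harmonic function from noncompactness, and I would aim to do this for \emph{every} reasonable $\mu$ at once, so that the \emph{some}/\emph{every} equivalence falls out automatically.

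For the forward direction I would fix a compact symmetric generating set $K$, form the word metric $d_K$, and note that noncompactness is exactly the unboundedness of $d_K$, so $x\mapsto d_K(e,x)$ is a nonconstant $1$-Lipschitz starting function. The engine for the Lipschitz case is the Mok--Kleiner construction phrased through cocycles: given a unitary representation $\pi$ and a $1$-cocycle $b:G\to\mathcal H_\pi$, I would minimize the energy $\int_G \|b(g)+\pi(g)v-v\|^2\,d\mu(g)$ over $v$ in the closure of the coboundary directions. The minimizer $b^{\ast}$ satisfies $\int(\pi(g)-1)b^{\ast}\,d\mu(g)=0$, which is precisely harmonicity of the orbit map $x\mapsto b^{\ast}(x)$; pairing with a fixed functional yields a scalar harmonic function, and the cocycle identity $b^{\ast}(xg)-b^{\ast}(x)=\pi(x)b^{\ast}(g)$ makes it $\sup_{g\in K}\|b^{\ast}(g)\|$-Lipschitz for $d_K$. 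This function is nonconstant exactly when $b^{\ast}\ne 0$, i.e.\ when $[b]$ is nonzero in the \emph{reduced} cohomology $\overline{H}^1(G,\pi)$.

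This is where the dichotomy enters and is the crux. If $\overline{H}^1(G,\pi)\ne 0$ for some $\pi$, the construction delivers a nonconstant Lipschitz $\mu$-harmonic function for every reasonable $\mu$, the minimizer being nonzero precisely because the fixed class is nonzero (and the energy norms from different reasonable $\mu$ are equivalent on a compactly generated group, so the class does not depend on $\mu$). If instead $\overline{H}^1(G,\pi)=0$ for \emph{all} $\pi$, then, using that $G$ is compactly generated, Shalom's cohomological characterization forces property (T); being noncompact and (T), $G$ is nonamenable, whence a Kaimanovich--Vershik/Rosenblatt-type theorem gives every symmetric generating measure a nontrivial Poisson boundary and hence a nonconstant \emph{bounded} $\mu$-harmonic function. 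Since noncompact amenable groups never have (T), the amenable case always lands in the Lipschitz branch and the nonamenable case always supplies the bounded branch, while groups that are both nonamenable and have nonvanishing reduced cohomology (e.g.\ free groups) exhibit both, matching the ``(possibly with both)'' clause.

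The hard part will be making the Lipschitz branch genuinely self-contained in the compactly generated setting rather than quoting the (T)/reduced-cohomology machinery. I expect the real work to be (i) reducing the locally compact second countable problem to the proper metric space $(G,d_K)$ and checking that a reasonable (symmetric, compactly supported, absolutely continuous, generating) $\mu$ makes the averaging operator contract Lipschitz seminorms, and (ii) extracting the harmonic function directly: run Arzel\`a--Ascoli compactness of normalized $1$-Lipschitz functions together with Ces\`aro averaging of the random-walk operator to obtain a harmonic limit, then argue the limit cannot collapse to a constant unless $G$ is nonamenable, in which case one switches to the boundary construction. Verifying this non-collapse is the step I anticipate will demand the most care, since for symmetric starting data (such as $d_K$ itself) the averages can degenerate, so the choice of initial function and the link between collapse and amenability must be handled precisely.
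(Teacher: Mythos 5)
Your proposal follows essentially the same route as the paper: the maximum principle handles the compact direction, and the noncompact direction splits via Margulis's ``amenable plus property $(T)$ implies compact'' into a Lipschitz branch (a nonzero reduced cohomology class, guaranteed by Shalom's characterization when $(T)$ fails, whose energy-minimizing harmonic cocycle representative is paired against vectors to give nonconstant Lipschitz $\mu$-harmonic functions for every reasonable $\mu$) and a bounded branch (nonamenability gives nontrivial Poisson boundary, hence nonconstant bounded $\mu$-harmonic functions, via Furstenberg and Kaimanovich--Vershik). The only caveats are cosmetic: reduced cohomology is defined independently of $\mu$, so no ``equivalence of energy norms'' is needed for the some/every equivalence, and the self-contained Arzel\`a--Ascoli route you sketch at the end is not needed --- the energy-minimization argument (with the Cauchy-sequence limit replacing an attained minimizer) already is the paper's proof.
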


The definitions of reasonableness for a measure and $\mu$-harmonic functions appear in Section \ref{S:funcs} below.

\subsection*{On the Approach}

We employ the strategy used by Margulis \cite{Margulis}, namely that $G$ is compact if and only if it is amenable and has property $(T)$, following the approach taken by Mok \cite{mok} and Kleiner \cite{kleiner}.

Furstenberg \cite{furst} initiated the study of bounded harmonic functions on locally compact groups and Kaimanovich and Vershik \cite{KV} established that a group is amenable if and only if there exists a measure admitting nonconstant bounded harmonic functions.  Several nice accounts of this theory appear in the literature, e.g.~in \cite{BS06}, and our aim is to provide the companion work for the property $(T)$ half of the proof.

Making use of the notion of harmonic cocycles, specifically that property $(T)$ is equivalent to the vanishing of all nonconstant harmonic cocycles, we establish that the nonexistence of Lipschitz bounded nonconstant harmonic functions implies the group has property $(T)$.

The first systematic study of harmonic cocycles appears in the author's dissertation \cite{Creutz2011} and we include a presentation of the material relevant in that work here.  More recently, Bekka \cite{bekka} employed harmonic cocycles and their relationship to property $(T)$ in the setting of von Neumann algebras and Ozawa \cite{ozawa} and Erschler and Ozawa \cite{oe} studied the connection between harmonic cocycles and Shalom's property $H_{FD}$, also largely in the operator-algebraic setting.  Here we include the general theory of harmonic cocycles in terms of the energy functional and the direct construction of a sequence of cocycles tending towards harmonic.

\subsection*{Additional Results}

We use harmonic cocycles to give new (short) proofs of some results of Shalom on products of groups.

Specifically, we give a new proof of the fact that every cocycle $\beta$ on $G_{1} \times G_{2}$ is almost cohomologous to a cocycle of the form $\beta_{1} + \beta_{2} + \beta_{0}$ where $\beta_{0}$ takes values in the invariant vectors and each $\beta_{j}$ is a cocycle on $G_{j}$ under the representation restricted to the $G_{3-j}$-invariant vectors.

We also give a short, more direct, proof that if $\Gamma$ is a lattice in a product of simple compactly generated groups and $N \normal \Gamma$ then $\Gamma / N$ has property $(T)$.

\subsection*{Acknowledgments}

The author would like to thank the referee for pointing out an issue in the original formulation of the main theorem and for many other helpful suggestions.

\section{Harmonic Functions on Groups}\label{S:funcs}

Throughout $G$ is a locally compact second countable compactly generated group.

\begin{definition}
Let $\mu$ be a probability measure on $G$.  A function $f : G \to \mathbb{R}$ is \emph{$\mu$-harmonic} when for all $g \in G$ we have
\[
\int_{G} f(gh)~d\mu(h) = f(g)
\]
\end{definition}

For discrete groups, one is only interested in measures whose support generates the whole group (as otherwise one is in fact talking about harmonic functions on the subgroup generated by the support).  In the locally compact case, the analogous requirement is:

\begin{definition}
A probability measure $\mu$ on $G$ is \emph{admissible} when the support of $\mu$ generates $G$ and when $\mu$ (more generally some convolution power of $\mu$ with itself: $\mu * \mu * \cdots * \mu$) is nonsingular with respect to the Haar measure.
\end{definition}

One also prefers to work with symmetric measures:
\begin{definition}
A probability measure $\mu$ on $G$ is \emph{symmetric} when $d\mu(g) = d\mu(g^{-1})$, more precisely: for all measurable functions $f : G \to \mathbb{R}$ it holds that $\int f(g)~d\mu(g) = \int f(g^{-1})~d\mu(g)$.
\end{definition}

In order to formulate the argument, we also consider only measures with some control over their second moment relative to word length:
\begin{definition}
Let $K$ be a compact generating set for $G$ and write $|g|$ for the word length of $g \in G$ relative to the set $K$.
A probability measure $\mu$ on $G$ has \emph{finite second moment} when
\[
\int_{G} |g|^{2}~d\mu(g) < \infty
\]
(Note that while $|g|$ depends on the choice of $K$, the finiteness of the integral does not depend on the choice of generating set).
\end{definition}

With the definitions in mind, we formulate the class of measures to be considered:
  \begin{definition}
 A probability measure $\mu \in P(G)$ is \emph{reasonable} when it is admissible, symmetric and has finite second moment.
 \end{definition}
 
 The purpose of the paper is to prove:
 
  \begin{theorem}\label{T:propT}
 Let $G$ be a locally compact second countable compactly generated group.  If there exists a reasonable measure $\mu$ on $G$ such that every $\mu$-harmonic Lipschitz function is constant then $G$ has property $(T)$.
\end{theorem}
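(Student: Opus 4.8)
The plan is to prove the contrapositive in spirit: assuming $G$ does \emph{not} have property $(T)$, I will produce a nonconstant $\mu$-harmonic Lipschitz function for every reasonable $\mu$. The bridge between the analytic hypothesis (harmonic functions) and the group-theoretic conclusion (property $(T)$) is the theory of cocycles. Recall that property $(T)$ is equivalent to the statement that every cocycle into an orthogonal (or unitary) representation without invariant vectors is bounded---equivalently that $H^1(G,\pi)=0$ for all such $\pi$, by the Delorme--Guichardet theorem and the fact that for compactly generated $G$, property $(T)$ coincides with property (FH). So if $G$ lacks property $(T)$, there is a representation $\pi$ on a real Hilbert space $\mathcal{H}$ with no nonzero invariant vectors admitting an unbounded cocycle $b : G \to \mathcal{H}$ satisfying the cocycle identity $b(gh) = b(g) + \pi(g)b(h)$.

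First I would introduce the energy functional. Given a cocycle $b$ and a reasonable measure $\mu$, its \emph{energy} is
\[
E_\mu(b) = \int_G \|b(g)\|^2 \, d\mu(g),
\]
which is finite precisely because the finite-second-moment hypothesis controls $\|b(g)\|$ by a constant times $|g|$ (a cocycle is Lipschitz in the word metric). The central idea, going back to Mok and Kleiner, is that one can minimize energy within the affine class of cocycles cohomologous to $b$, or more precisely extract an energy-minimizing sequence and pass to a limit, obtaining a \emph{harmonic cocycle}: one for which $\int_G \pi(g)\, b(g)\,d\mu(g) = 0$, the Euler--Lagrange condition for the energy. The key point is that this harmonic cocycle is still nonzero (indeed still unbounded, or at least nontrivial in cohomology) because property $(T)$ fails; the vanishing of all harmonic cocycles is exactly equivalent to property $(T)$, which is the mechanism advertised in the introduction.

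Given a nonzero harmonic cocycle $b$ into $(\pi,\mathcal{H})$, I would extract a scalar harmonic function by pairing against a fixed vector. Choose $v \in \mathcal{H}$ with $\langle b(g_0), v\rangle$ nonconstant in $g_0$ (possible since $b$ is nonconstant), and set
\[
f(g) = \langle b(g), v \rangle.
\]
The cocycle identity gives $f(gh) = \langle b(g),v\rangle + \langle \pi(g)b(h), v\rangle$, so
\[
\int_G f(gh)\,d\mu(h) = f(g) + \Big\langle \pi(g)\!\int_G b(h)\,d\mu(h),\, v\Big\rangle,
\]
and here I would use symmetry of $\mu$ together with the harmonicity condition $\int_G \pi(g)b(g)\,d\mu(g)=0$ to show the correction term vanishes, yielding $\mu$-harmonicity of $f$. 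The Lipschitz bound $|f(g)-f(g')| \le \|v\|\,\|b(g)-b(g')\|$ and the fact that harmonic cocycles remain Lipschitz in the word metric give that $f$ is Lipschitz, and nonconstancy of $f$ follows from the choice of $v$.

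The hard part, I expect, is the energy-minimization step: showing that an energy-minimizing sequence of cocycles converges (in a suitable weak sense, after translating to fix a basepoint) to a genuine harmonic cocycle that is still nontrivial. One must control the representation and cocycle data simultaneously---typically by passing to weak limits of the representations $\pi_n$ on a common probability space or via an ultralimit/ultraproduct construction---and verify that nontriviality is preserved in the limit, i.e.\ that the limiting cocycle is not a coboundary and the limiting representation still has no invariant vectors. This is precisely where the failure of property $(T)$ must be leveraged to prevent the minimizing sequence from collapsing to zero, and where the compactly-generated-but-noncompact hypothesis enters through the finite-second-moment control that keeps the energies uniformly finite. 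Establishing this compactness-and-nonvanishing dichotomy cleanly in the locally compact setting, rather than the discrete one of Mok and Kleiner, is the main technical obstacle.
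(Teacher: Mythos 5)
Your overall strategy---extract a nonzero $\mu$-harmonic cocycle by energy minimization, then pair it against vectors to get the scalar functions $f(g)=\langle b(g),v\rangle$---is the paper's strategy, and your final step (harmonicity of $f$ via the cocycle identity, the Lipschitz bound from $\|b(g)-b(h)\|=\|b(h^{-1}g)\|\leq L|h^{-1}g|$, and nonconstancy from $b\neq 0$, $b(e)=0$) matches the paper's proof essentially verbatim. Your Euler--Lagrange condition $\int_{G}\pi(g)b(g)\,d\mu(g)=0$ is also fine: for symmetric $\mu$ the cocycle identity shows it is equivalent to the paper's condition $\int_{G}b(g)\,d\mu(g)=0$.

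The genuine gap is at the very first step: you invoke Delorme--Guichardet to get a representation $\pi$ with $H^{1}(G,\pi)\neq 0$, i.e.\ an unbounded cocycle $b$, and hope that energy minimization applied to $b$ yields a \emph{nonzero} harmonic cocycle. It need not. The minimization takes place inside the cohomology class of $b$, and what it produces is the unique harmonic representative of the \emph{reduced} class $[b]\in\overline{H^{1}}(G,\pi)$; that representative is zero precisely when $b$ lies in the closure $\overline{B^{1}(G,\pi)}$ of the coboundaries. An unbounded cocycle can perfectly well be almost cohomologous to zero: for $G=\mathbb{Z}$ with the regular representation $\lambda$ on $\ell^{2}(\mathbb{Z})$ one has $H^{1}(\mathbb{Z},\lambda)\neq 0$, yet $\overline{H^{1}}(\mathbb{Z},\lambda)=0$ because $\lambda(1)-I$ has dense range; for such a cocycle your minimizing sequence collapses to $0$ and the argument produces nothing. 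What is needed is a representation with $\overline{H^{1}}(G,\pi)\neq 0$, and the assertion that failure of property $(T)$ supplies one is exactly Shalom's theorem (\cite{shalom}, \cite{Sha06}) for compactly generated groups---a substantially deeper input than Delorme--Guichardet, and the one the paper actually uses in Theorem \ref{T:Tharmonic}. Patching your argument without it would amount to reproving Shalom's theorem. On the other hand, once you do start from a class that is nonzero in reduced cohomology, the ``main technical obstacle'' you anticipate disappears: no ultralimits or weak limits of varying representations are needed, since the minimizing sequence $\beta_{n}(g)=\varphi(g)+\pi(g)v_{n}-v_{n}$ stays in a single representation and a single class, is Cauchy in $L^{2}(\mu)$ by the parallelogram law (hence converges pointwise to a cocycle $\beta$ in the same reduced class), $\beta$ is harmonic by continuity of the energy and of its directional derivatives, and $\beta\neq 0$ precisely because its reduced class is nonzero---which is Proposition \ref{P:uniquemin} together with the fixed-point decomposition in the proof of Theorem \ref{T:harmoniccocycles2}.
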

 
  \begin{corollary}\label{T:compactgroups}
 Let $G$ be a locally compact second countable compactly generated group.  
 The following are equivalent:
 \begin{itemize}
 \item $G$ is compact;
 \item there exists a reasonable measure $\mu$ on $G$ such that every $\mu$-harmonic Lipschitz function and every $\mu$-harmonic bounded function is constant; and
 \item for every reasonable measure $\mu$ on $G$, every $\mu$-harmonic Lipschitz function and every $\mu$-harmonic bounded function is constant.
 \end{itemize}
 \end{corollary}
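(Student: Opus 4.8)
The plan is to establish the cycle of implications $(1) \Rightarrow (3) \Rightarrow (2) \Rightarrow (1)$, drawing on Theorem~\ref{T:propT} together with the two classical facts recalled in the introduction: the Kaimanovich--Vershik criterion that a group is amenable if and only if some admissible measure has trivial Poisson boundary (equivalently, admits no nonconstant bounded harmonic function), and Margulis's observation that a locally compact second countable group is compact precisely when it is simultaneously amenable and has property~$(T)$.

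For $(1) \Rightarrow (3)$ I would argue directly. First note that on a compact group the word length relative to any compact generating set $K$ is bounded: since $G = \bigcup_{n} K^{n}$ is an increasing union of compact sets, compactness forces $G = K^{n}$ for some finite $n$, so $G$ has finite diameter and every Lipschitz function is automatically bounded. It therefore suffices to show that every bounded $\mu$-harmonic function is constant. Normalize Haar measure $m$ to a probability measure; since compact groups are unimodular, $m$ is right-invariant and each right translation $R_{h}f(g) = f(gh)$ is a unitary operator on $L^{2}(G,m)$. Consider the Markov operator $(Pf)(g) = \int_{G} f(gh)\,d\mu(h) = \int_{G} (R_{h}f)(g)\,d\mu(h)$, the $\mu$-average of the $R_{h}$. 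Symmetry of $\mu$ makes $P$ self-adjoint, and it is a contraction; if $Pf = f$ then equality in Cauchy--Schwarz forces $R_{h}f = f$ for $\mu$-almost every $h$, whence $f$ is right-$G$-invariant (the support of $\mu$ generating $G$), i.e.\ constant. As a bounded $\mu$-harmonic function lies in $L^{2}(G,m)$ and is fixed by $P$, it is constant.

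The implication $(3) \Rightarrow (2)$ is immediate once one knows that $G$ carries at least one reasonable measure; this is a routine construction, obtained by spreading a symmetric, absolutely continuous, rapidly decaying density over the powers of a symmetric compact generating neighborhood so that admissibility, symmetry, and finiteness of the second moment hold simultaneously.

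The heart of the matter is $(2) \Rightarrow (1)$, and this is where I expect the main obstacle to lie, since the argument must combine both halves of the hypothesis. Fix a reasonable $\mu$ for which every Lipschitz and every bounded $\mu$-harmonic function is constant. The vanishing of nonconstant \emph{Lipschitz} harmonic functions feeds directly into Theorem~\ref{T:propT} and yields that $G$ has property~$(T)$. Separately, the vanishing of nonconstant \emph{bounded} harmonic functions says that the Poisson boundary of $(G,\mu)$ is trivial; since $\mu$ is admissible, the contrapositive of the Kaimanovich--Vershik criterion forces $G$ to be amenable, for were $G$ nonamenable then every admissible measure, $\mu$ in particular, would have nontrivial boundary. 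Possessing both property~$(T)$ and amenability, $G$ is compact by Margulis's characterization. It bears emphasizing that both conjuncts of the hypothesis are genuinely needed: controlling only Lipschitz harmonic functions secures property~$(T)$ but not amenability, while controlling only bounded ones secures amenability but not property~$(T)$. This is exactly why the Main Theorem's dichotomy is phrased with an ``or.''
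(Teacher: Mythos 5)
Your proposal is correct, and on the crucial implication $(2) \Rightarrow (1)$ it coincides with the paper's proof: both you and the paper feed the Lipschitz hypothesis into Theorem \ref{T:propT} to get property $(T)$, invoke Furstenberg and Kaimanovich--Vershik to get amenability from the bounded hypothesis, and conclude compactness via Margulis. Where you genuinely diverge is $(1) \Rightarrow (3)$. The paper argues by the maximum principle: a Lipschitz harmonic function is continuous, hence attains its maximum on the compact group, and harmonicity propagates the maximum value along the generating support of $\mu$, forcing constancy; as written, that argument treats only the Lipschitz case. You instead reduce the Lipschitz case to the bounded case and run an $L^{2}(G,\mathrm{Haar})$ argument: the Markov operator $P = \int_{G} R_{h}\,d\mu(h)$ is a contraction (self-adjoint by symmetry of $\mu$), a $P$-fixed vector must satisfy $R_{h}f = f$ for $\mu$-a.e.\ $h$ by the equality case of Cauchy--Schwarz, and the fixed set is a closed subgroup containing $\mathrm{supp}\,\mu$, hence all of $G$. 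Your route buys coverage of bounded \emph{measurable} harmonic functions that need not be continuous --- which condition (3) explicitly demands and which the maximum-principle argument cannot see --- so in this respect your proof is more complete than the paper's. The paper's route, conversely, needs no Haar measure or spectral reasoning, and for continuous functions it upgrades ``constant $\mu$-a.e.''\ to ``constant'' by continuity alone.

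Two small repairs. First, your justification that the word metric on a compact group is bounded (``an increasing union of compact sets covering a compact space must stabilize'') is not a valid topological principle; the correct argument is Baire category (some $K^{n}$ has nonempty interior, then finitely many translates cover $G$). But you do not need this fact at all: Lipschitz functions are continuous, and continuous functions on a compact group are bounded, which is all your reduction requires. Second, your $L^{2}$ argument yields that $f$ is constant Haar-almost everywhere, whereas harmonicity and the statement of the corollary are pointwise; to upgrade, use admissibility: writing $\mu^{*n} = \alpha_{n} + \sigma_{n}$ with $\alpha_{n}$ absolutely continuous and $\sigma_{n}$ singular, one has $\|\sigma_{nk}\| \leq \|\sigma_{k}\|^{n} \to 0$ for a suitable $k$, and then $f(g) = \int_{G} f(gh)\,d\mu^{*nk}(h) \to c$ for \emph{every} $g$. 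Neither point affects the structure of your proof. You are also right, where the paper is silent, that $(3) \Rightarrow (2)$ requires exhibiting at least one reasonable measure on $G$.
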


\section{Unitary Representations}

Let $\mathcal{H}$ be a Hilbert space.  A continuous homomorphism $\pi : G \to \mathcal{U}(\mathcal{H})$ sending each element of $G$ to a unitary operator on $\mathcal{H}$ is a \emph{unitary representation of $G$ on a Hilbert space}.  

We begin by recalling the notions of cohomology and reduced cohomology.  The reader is referred to \cite{bdv} for further details on the well-known material in this section.

\subsection{Cocyles}

\begin{definition}
A map $b : G \to \mathcal{H}$ satisfying
\[
b(gh) = \pi(g)b(h) + b(g)
\]
for all $g,h \in G$ is a \emph{cocyle}.
\end{definition}

\begin{definition}
For $v \in \mathcal{H}$, the cocycle
\[
b(g) = \pi(g)v - v
\]
is a \emph{coboundary}
\end{definition}

The space of all cocycles of a representation is
\[
Z^{1}(G,\pi) = \{ b : G \to \mathcal{H}~|~b \text{ is a cocycle} \}
\]
and the space of coboundaries is
\[
B^{1}(G,\pi) = \{ b : G \to \mathcal{H}~|~b \text{ is a coboundary} \}
\]

\subsection{Cohomology}

\begin{definition}
The \emph{(first) cohomology} of the representation is
\[
H^{1}(G,\pi) = Z^{1}(G,\pi) / B^{1}(G,\pi)
\]
\end{definition}

\begin{definition}
A cocycle $b$ is \emph{cohomologous} to a cocycle $\varphi$ when they are in the same cohomology class, specifically: $b$ is cohomologous to $\varphi$ when there exists $v \in \mathcal{H}$ such that $b(g) = \varphi(g) + \pi(g)v - v$ for all $g \in G$.
\end{definition}

If we endow the space $Z^{1}(G,\pi)$ with the topology of strong convergence in $\mathcal{H}$ uniformly over compact sets in $G$, the space of coboundaries need not be closed.

\begin{definition}
The \emph{reduced (first) cohomology} of the representation is
\[
\overline{H^{1}}(G,\pi) = Z^{1}(G,\pi) / \overline{B^{1}(G,\pi)}
\]
\end{definition}

\begin{definition}
A cocycle $b$ is \emph{almost cohomologous} to a cocycle $\varphi$ when they are in the same reduced cohomology class, specifically: $b$ is almost cohomologous to $\varphi$ when there exists $v_{n} \in \mathcal{H}$ such that $b(g) = \lim_{n} \varphi(g) + \pi(g)v_{n} - v_{n}$ for all $g \in G$ where the limit is in the strong topology.
\end{definition}

In particular, since the map which is uniformly $0$ is a cocycle, we will be interested in cocycles which are \emph{almost cohomologous to $0$}: those $b \in Z^{1}(G,\pi)$ such that there exists $v_{n} \in \mathcal{H}$ with $b(g) = \lim \pi(g)v_{n} - v_{n}$ in the strong topology.

\subsection{Property $(T)$}

\begin{definition}
Let $\pi : G \to \mathcal{U}(\mathcal{H})$ be a unitary representation on a Hilbert space.  Then $\pi$ has \emph{almost invariant vectors} when there exist $v_{n} \in \mathcal{H}$ with $\| v_{n} \| = 1$ such that
\[
\| \pi(g)v_{n} - v_{n} \| \to 0
\]
uniformly over compact subsets of $G$.
\end{definition}

\begin{definition}
A group $G$ has property $(T)$ when for every unitary representation $\pi : G \to \mathcal{U}(\mathcal{H})$ with almost invariant vectors has a nonzero invariant vector: there exists $v \in \mathcal{H}$ with $\pi(g)v = v$ for all $g \in G$.
\end{definition}

The role of property $(T)$ in our study stems from it being in a certain sense disjoint from the notion of amenable, specifically:
\begin{theorem}
If $G$ is amenable and has property $(T)$ then $G$ is compact.
\end{theorem}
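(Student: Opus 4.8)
The plan is to apply property $(T)$ to the left regular representation $\lambda$ of $G$ on $L^{2}(G)$ (taken with respect to a left Haar measure), where $(\lambda(g)\xi)(x) = \xi(g^{-1}x)$. The whole argument amounts to producing almost invariant vectors for $\lambda$ from amenability, extracting an honest invariant vector from property $(T)$, and then observing that such a vector can only exist when $G$ has finite Haar measure.

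First I would invoke the standard characterization of amenability for locally compact groups, namely Reiter's condition $(P_{2})$ (equivalently, the Hulanicki--Reiter theorem that the trivial representation is weakly contained in $\lambda$): for every compact $K \subseteq G$ and every $\varepsilon > 0$ there is a unit vector $\xi \in L^{2}(G)$ with $\sup_{g \in K} \| \lambda(g)\xi - \xi \| < \varepsilon$. Choosing an exhausting sequence of compact sets and $\varepsilon_{n} \to 0$ produces unit vectors $\xi_{n}$ with $\| \lambda(g)\xi_{n} - \xi_{n} \| \to 0$ uniformly on compact subsets of $G$; that is, $\lambda$ has almost invariant vectors in exactly the sense defined above. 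Since $G$ has property $(T)$, the definition applied to $\lambda$ then yields a nonzero vector $\xi_{0} \in L^{2}(G)$ with $\lambda(g)\xi_{0} = \xi_{0}$ for all $g \in G$.

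Such an invariant vector satisfies $\xi_{0}(g^{-1}x) = \xi_{0}(x)$ for almost every $x$ and all $g$, so $\xi_{0}$ agrees almost everywhere with a constant $c \neq 0$. A nonzero constant function lies in $L^{2}(G,\mathrm{Haar})$ precisely when the total Haar mass of $G$ is finite, and for a locally compact group finite Haar mass is equivalent to compactness; hence $G$ is compact, as claimed. The main obstacle is really the first step: one must correctly invoke the amenability characterization in the locally compact (not merely discrete) setting and verify that the resulting almost invariant vectors for $\lambda$ converge uniformly over compact sets, so as to match the definition of almost invariant vectors feeding into property $(T)$. Once Reiter's condition $(P_{2})$ is in hand this matching is immediate, and the remaining steps are routine.
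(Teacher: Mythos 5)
Your proof is correct, and it is the standard argument for this fact: Hulanicki--Reiter to get almost invariant vectors in the left regular representation $\lambda$, property $(T)$ to upgrade them to a genuine invariant vector, and the observation that a nonzero $\lambda$-invariant vector in $L^{2}(G)$ forces $\mathrm{Haar}(G) < \infty$, hence compactness. Note that the paper itself states this theorem without proof, treating it as known background (it is the disjointness of amenability and property $(T)$ underlying Margulis's strategy), so there is no in-paper argument to compare against; yours supplies exactly the proof the paper implicitly relies on. The only step worth spelling out a little more is the passage from ``$\lambda(g)\xi_{0} = \xi_{0}$ for all $g$'' to ``$\xi_{0}$ is a.e.\ constant'': since invariance holds only up to a null set depending on $g$, one should either invoke ergodicity of the left translation action of $G$ on itself or run a short Fubini argument (for a.e.\ $x_{0}$, one has $\xi_{0}(g^{-1}x_{0}) = \xi_{0}(x_{0})$ for a.e.\ $g$, and a change of variables then shows $\xi_{0}$ is a.e.\ equal to $\xi_{0}(x_{0})$); with that detail filled in, the argument is complete.
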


Reduced cohomology will be at the center of our presentation due to the following:
\begin{theorem}[Shalom \cite{shalom}]
A compactly generated locally compact second countable group $G$ has property $(T)$ if and only if for every irreducible unitary representation $\pi$ on a Hilbert space, the reduced cohomology is trivial.
\end{theorem}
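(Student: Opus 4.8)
The plan is to prove the two implications separately, treating the forward direction as a soft consequence of the Delorme--Guichardet theorem and reserving the genuine content for the reverse. Suppose first that $G$ has property $(T)$. By Delorme--Guichardet, property $(T)$ is equivalent to property $(FH)$ -- every affine isometric action of $G$ on a Hilbert space has a fixed point -- which is in turn equivalent to the vanishing of the \emph{unreduced} cohomology $H^{1}(G,\pi)=0$ for every unitary representation $\pi$. Since $B^{1}(G,\pi)\subseteq\overline{B^{1}(G,\pi)}\subseteq Z^{1}(G,\pi)$, the reduced cohomology $\overline{H^{1}}(G,\pi)=Z^{1}/\overline{B^{1}}$ is a quotient of $H^{1}(G,\pi)=Z^{1}/B^{1}$, so $H^{1}(G,\pi)=0$ forces $\overline{H^{1}}(G,\pi)=0$; in particular this holds for every irreducible $\pi$.

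For the reverse implication I would argue by contraposition, assuming $G$ does not have property $(T)$ and producing an irreducible representation with nontrivial reduced cohomology. The first step manufactures, from the failure of $(T)$, a single representation carrying a nonzero reduced class. Failure of $(T)$ supplies, for the compact generating set $K$ and a sequence $\epsilon_{n}\downarrow 0$, representations $(\sigma_{n},\mathcal{H}_{n})$ with no nonzero invariant vector but with unit vectors $\xi_{n}$ satisfying $\sup_{g\in K}\|\sigma_{n}(g)\xi_{n}-\xi_{n}\|\le\epsilon_{n}$. Setting $\pi=\bigoplus_{n}\sigma_{n}$ and $b(g)=\bigoplus_{n}a_{n}\bigl(\sigma_{n}(g)\xi_{n}-\xi_{n}\bigr)$ for scalars $a_{n}$ chosen with $\sum_{n}a_{n}^{2}\epsilon_{n}^{2}<\infty$ but $\sum_{n}a_{n}^{2}=\infty$, one checks (using that word length controls $\|\sigma_{n}(g)\xi_{n}-\xi_{n}\|\le|g|\epsilon_{n}$) that $b$ is a genuine element of $Z^{1}(G,\pi)$ whose only fiberwise potential $\bigoplus_{n}a_{n}\xi_{n}$ has infinite norm and so fails to lie in $\mathcal{H}$; an energy estimate then shows $b$ is not almost cohomologous to $0$, whence $\overline{H^{1}}(G,\pi)\neq 0$.

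The final step descends to an irreducible representation. Decomposing $\pi=\int^{\oplus}\pi_{x}\,d\nu(x)$ into irreducibles yields a fiberwise decomposition $b=\int^{\oplus}b_{x}\,d\nu(x)$. Were every irreducible to have trivial reduced cohomology, each $b_{x}$ would be a strong limit of coboundaries $\pi_{x}(g)v_{x}^{(k)}-v_{x}^{(k)}$, and the goal would be to select these fiberwise potentials measurably and with $\int\|v_{x}^{(k)}\|^{2}\,d\nu(x)$ under control, so that the vectors $v^{(k)}=\int^{\oplus}v_{x}^{(k)}\,d\nu(x)$ lie in $\mathcal{H}$ and witness that $b$ itself is almost cohomologous to $0$, contradicting the previous step and forcing some $\pi_{x}$ to have $\overline{H^{1}}(G,\pi_{x})\neq 0$. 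This measurable assembly is the crux and the main obstacle: ``almost cohomologous to $0$'' is a closure condition that does not descend through a direct integral for free, and pinning down the norms $\|v_{x}^{(k)}\|$ uniformly across the fibers is exactly what requires Shalom's quantitative energy estimates rather than a naive fiberwise selection.
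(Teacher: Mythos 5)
Your forward implication is fine: property $(T)$ gives $H^{1}(G,\pi)=0$ for every $\pi$ by Delorme--Guichardet, and $\overline{H^{1}}(G,\pi)$ is a quotient of $H^{1}(G,\pi)$, so it vanishes as well. Note, for calibration, that the paper does not prove this theorem at all: it is quoted from Shalom and used as a black box (e.g.\ in the proof of Theorem \ref{T:Tharmonic}), so the real question is whether your argument for the hard direction stands on its own. It does not, and the failure is in your second step, not the third. The cocycle $b(g)=\bigoplus_{n}a_{n}\bigl(\sigma_{n}(g)\xi_{n}-\xi_{n}\bigr)$ with $\sum_{n}a_{n}^{2}\epsilon_{n}^{2}<\infty$ and $\sum_{n}a_{n}^{2}=\infty$ is indeed a well-defined cocycle, and it is not a coboundary (since the $\sigma_{n}$ have no invariant vectors, the only candidate potential is $\bigoplus_{n}a_{n}\xi_{n}$, which has infinite norm). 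But it \emph{is} almost cohomologous to $0$, so the ``energy estimate'' you invoke cannot exist. Truncate the potential: put $v_{N}=\bigoplus_{n\leq N}a_{n}\xi_{n}$, a vector of finite norm. Then
\[
b(g)-\bigl(\pi(g)v_{N}-v_{N}\bigr)=\bigoplus_{n>N}a_{n}\bigl(\sigma_{n}(g)\xi_{n}-\xi_{n}\bigr)
\]
and your own word-length bound $\|\sigma_{n}(g)\xi_{n}-\xi_{n}\|\leq |g|\,\epsilon_{n}$ gives
\[
\bigl\|b(g)-\bigl(\pi(g)v_{N}-v_{N}\bigr)\bigr\|^{2}\leq |g|^{2}\sum_{n>N}a_{n}^{2}\epsilon_{n}^{2}.
\]
Since word length is bounded on compact subsets of a compactly generated group and $\sum_{n}a_{n}^{2}\epsilon_{n}^{2}<\infty$, the right-hand side tends to $0$ uniformly on compacta as $N\to\infty$. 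Hence $b\in\overline{B^{1}(G,\pi)}$ and $[b]=0$ in $\overline{H^{1}}(G,\pi)$: the very summability condition that makes $b$ well defined forces its triviality in reduced cohomology. (Equivalently, in the language of Section \ref{S:harmonic}, the truncations show $\inf_{v}E_{b}^{\mu}(v)=0$, so the unique harmonic representative of $[b]$ is $0$.)

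What your construction actually reproves is a direction of Delorme--Guichardet: failure of $(T)$ yields a representation with $H^{1}\neq 0$. The entire content of Shalom's theorem is the gap between $H^{1}$ and $\overline{H^{1}}$ -- producing a cocycle that survives modulo the \emph{closure} of the coboundaries -- and the truncation argument above shows that no diagonal direct-sum-of-coboundaries construction can ever do this. Shalom's proof (see also the account in \cite{bdv}) has to work genuinely harder: one normalizes the coboundaries $\sigma_{n}(\cdot)\xi_{n}-\xi_{n}$ by their maximal displacement over the compact generating set, passes to a limit of the associated conditionally negative definite functions, and then must argue -- this is the real crux -- that the limiting class is nonzero in \emph{reduced} cohomology, after which the passage to an irreducible summand requires its own separate argument. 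You correctly flagged the measurable-selection problem in your third step as unresolved, but the proof is already irreparable one step earlier, and for a structural reason rather than a technical one.
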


\section{Harmonic Cocycles, Energy and Property $(T)$}\label{S:harmonic}

Throughout, $G$ is a locally compact second countable compactly generated group and $\mu$ is a reasonable probability measure on $G$.

\subsection{Harmonic Cocycles}

We now systematically study the space of harmonic cocycles; our ultimate aim being to show that they are essentially connected to reduced cohomology and thus to property $(T)$.

\begin{definition}
A cocycle $\beta : G \to \mathcal{H}$ is \textbf{$\mu$-harmonic} when for all $g \in G$,
\[
\beta(g) = \int_{G} \beta(gg^{\prime}) d\mu(g^{\prime}).
\]
\end{definition}

The goal of this section is to prove:
\begin{theorem}\label{T:harmoniccocycles}
Let $G$ be a locally compact second countable compactly generated group and $\mu$ a reasonable probability measure on $G$.  In every reduced cohomology class $[\beta] \in \overline{H^{1}}(G,\pi)$ there exists a unique $\mu$-harmonic representative.
\end{theorem}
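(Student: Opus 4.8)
The plan is to equip the space of cocycles with the quadratic energy functional $E(b)=\int_G \|b(g)\|^2\,d\mu(g)$ and to recognize the $\mu$-harmonic representative as the unique energy minimizer in the reduced class, obtained as an orthogonal projection for the Hilbert structure that $E$ induces. First I would check that $E$ is finite on all of $Z^1(G,\pi)$: iterating the cocycle relation gives $b(g_1\cdots g_n)=\sum_i \pi(g_1\cdots g_{i-1})b(g_i)$, whence the word-length bound $\|b(g)\|\le |g|\sup_{s\in K}\|b(s)\|$, and finite second moment of $\mu$ forces $E(b)<\infty$. The bilinear form $\langle b_1,b_2\rangle_E=\int_G\langle b_1(g),b_2(g)\rangle\,d\mu(g)$ is then a genuine inner product on $Z^1$: if $E(b)=0$ then $b$ vanishes $\mu$-almost everywhere, hence on $\mathrm{supp}\,\mu$ by continuity, hence identically since $\mathrm{supp}\,\mu$ generates $G$.

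Next I would record the two algebraic facts that drive everything. Writing $m_b=\int_G b(g)\,d\mu(g)$, the cocycle identity yields $\int_G b(gg')\,d\mu(g')=b(g)+\pi(g)m_b$, so $b$ is $\mu$-harmonic if and only if $m_b=0$. The role of symmetry is to identify the harmonic cocycles with the energy-orthogonal complement of the coboundaries: using $b(g^{-1})=-\pi(g^{-1})b(g)$ together with $d\mu(g)=d\mu(g^{-1})$ one computes $\langle b,\pi(\cdot)v-v\rangle_E=-2\langle m_b,v\rangle$ for every $v\in\mathcal H$, and the same symmetry shows $m_b\perp\mathcal H^{\pi}$. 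Hence $b\perp_E B^1(G,\pi)$ iff $m_b=0$ iff $b$ is harmonic, so the harmonic cocycles form the closed subspace $\mathcal{Harm}=B^1(G,\pi)^{\perp_E}$.

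With this in hand, existence and uniqueness become the Hilbert-space decomposition $Z^1=\overline{B^1}^{\,E}\oplus\mathcal{Harm}$: writing $\beta=\beta'+\beta_0$ produces a harmonic $\beta_0$ with $\beta-\beta_0\in\overline{B^1}^{\,E}$, and $\beta_0$ is equally the energy limit of the minimizing sequence for $E$ over the class, whose Cauchyness is immediate from the parallelogram law since $\overline{B^1}$ is a subspace and midpoints therefore remain in the affine class. Uniqueness is then clean: if $\beta_0,\beta_1$ are both harmonic and almost cohomologous, their difference is harmonic and lies in $\overline{B^1}$, so writing it as an energy limit of coboundaries and pairing it against itself gives $\|\beta_0-\beta_1\|_E^2=0$; here I use only that convergence in the reduced-cohomology topology implies energy convergence, which follows from $E(b)\le\big(\int|g|^2\,d\mu\big)\sup_{s\in K}\|b(s)\|^2$.

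The main obstacle is matching the two topologies for the existence half. The reduced class is the closure of $\beta+B^1$ in the topology of strong convergence uniform on compact sets, whereas the projection only supplies $\beta-\beta_0\in\overline{B^1}^{\,E}$, the closure in the coarser energy norm. I must therefore promote an $L^2(\mu)$-approximation of $\beta-\beta_0$ by coboundaries to one that is uniform on compacta, and dually confirm that $\beta_0$ is genuinely continuous. This is exactly where admissibility enters: some convolution power $\mu^{*k}$ is nonsingular with respect to Haar measure and dominates $\rho\,dm_G$ for a density $\rho$ positive on a set $V$ of positive measure, while $E_{\mu^{*k}}(b)\le C_k E_\mu(b)$ by the cocycle relation; combined with Steinhaus's theorem (so that $VV^{-1}$ contains a neighborhood of the identity), the averaging identity $b(g_0)=\frac{1}{m(V)}\int_V\big(b(g_0v)-\pi(g_0)b(v)\big)\,dm(v)$, and the linear growth bound to pass from a neighborhood of the identity to arbitrary compact sets, one upgrades energy-smallness of a cocycle to smallness uniform on compact sets. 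I expect this regularity step—showing that the energy topology and the uniform-on-compacts topology agree on $Z^1$—to be the technical heart of the argument.
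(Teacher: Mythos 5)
Your algebraic core is correct, and it is essentially the paper's energy argument in cleaner packaging. The identity $\langle b,\pi(\cdot)v-v\rangle_{E}=-2\langle m_{b},v\rangle$ (valid because $b(g^{-1})=-\pi(g^{-1})b(g)$ and $\mu$ is symmetric), together with your observation that $m_{b}\perp\mathcal{H}^{\pi}$ for \emph{every} cocycle, yields ``harmonic iff $E$-orthogonal to $B^{1}$'' with no case analysis. The paper reaches the same point through directional derivatives of the energy (Lemmas \ref{L:dirderiv} and \ref{L:dirzero}, Proposition \ref{P:minharmonic}), and because its equality criterion (Lemma \ref{L:harmonicdecrease}) only forces the center to be a fixed vector rather than zero, it must split $\mathcal{H}$ into fixed and unfixed parts and handle the fixed part separately; your formulation makes that decomposition unnecessary, which is a real simplification. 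Your existence proof via the projection theorem in $L^{2}(G,\mu;\mathcal{H})$ is the paper's parallelogram-law minimizing sequence (Proposition \ref{P:uniquemin}) in disguise, and your uniqueness argument is sound: uniform-on-compacts convergence of cocycles implies energy convergence via the word-length bound, so a harmonic difference lying in $\overline{B^{1}}$ pairs to zero against itself.

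The genuine gap is in the regularity step, which you correctly identify as the technical heart: your mechanism for upgrading energy-smallness of a coboundary $\pi(\cdot)w-w$ to uniform smallness on compacta fails as described. Admissibility, Steinhaus, and your averaging identity do give uniform smallness on some identity neighborhood $W$, but the ``linear growth bound'' cannot propagate this to an arbitrary compact set: a word-length estimate in $W$ only reaches the subgroup generated by $W$, which is open but may well be proper. In a totally disconnected group such as $\mathrm{SL}_{2}(\mathbb{Q}_{p})$ or the automorphism group of a regular tree, small neighborhoods of $e$ generate compact open subgroups, so most points of the compact generating set $K$ are not words in $W$ at all, and your estimate says nothing about them. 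The repair uses density of $D=\langle\mathrm{supp}\,\mu\rangle$ rather than word growth: some $\mu^{*2k_{0}}$ has density bounded below on a nonempty open set $U_{0}$; for $d\in(\mathrm{supp}\,\mu)^{j}$ the measure $\mu^{*(j+2k_{0})}$ has density bounded below on a slightly shrunken translate $dU_{0}'$; since $G=DU_{0}$, any compact $Q$ is covered by finitely many such translates, giving $\int_{Q}\|\pi(g)w-w\|^{2}\,dm(g)\leq C_{Q}\,E(b_{w})$ with $C_{Q}$ independent of $w$, after which your averaging identity (with Cauchy--Schwarz) converts this into $\sup_{g\in Q}\|\pi(g)w-w\|^{2}\leq C_{Q}'\,E(b_{w})$. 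With that replacement your outline closes; it is worth noting that the paper itself elides exactly this point (in Proposition \ref{P:uniquemin} the limit is declared ``evidently'' a cocycle almost cohomologous to $\varphi$, though almost-cohomology demands uniform-on-compacts convergence), so supplying this argument carefully would strengthen, not merely reproduce, the printed proof.
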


Before proving this, we derive a consequence:

\begin{theorem}\label{T:Tharmonic}
Let $G$ be a locally compact second countable compactly generated group.  The following are equivalent:
\begin{itemize}
\item $G$ has property $(T)$;
\item there exists a reasonable probability measure $\mu$ on $G$ so that the only $\mu$-harmonic cocycle is $0$; and
\item for every reasonable probability measure $\mu$ on $G$, the only $\mu$-harmonic cocycle is $0$
\end{itemize}
\end{theorem}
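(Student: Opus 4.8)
The plan is to deduce everything from Theorem~\ref{T:harmoniccocycles} together with Shalom's characterization of property~$(T)$ via reduced cohomology; label the three conditions $(1)$, $(2)$, $(3)$ in the order listed. The key observation is that Theorem~\ref{T:harmoniccocycles} identifies, for each fixed unitary representation $\pi$, the set of $\mu$-harmonic cocycles in $Z^{1}(G,\pi)$ with the reduced cohomology $\overline{H^{1}}(G,\pi)$. Indeed, the assignment sending a reduced class to its unique $\mu$-harmonic representative and the assignment sending a $\mu$-harmonic cocycle $\beta$ to its class $[\beta]$ are mutually inverse: one composition is the identity because the chosen representative lies in the given class, and the other is the identity by the uniqueness clause of Theorem~\ref{T:harmoniccocycles}. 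Since the zero cocycle is $\mu$-harmonic and lies in the trivial class $\overline{B^{1}(G,\pi)}$, it is the representative of that class, so under this bijection $0$ corresponds to $0$. Therefore, for each fixed $\pi$, \emph{the only $\mu$-harmonic cocycle in $Z^{1}(G,\pi)$ is $0$ if and only if $\overline{H^{1}}(G,\pi)=0$}; this dictionary turns the theorem into a statement about the vanishing of reduced cohomology.

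With this dictionary in hand I would close the cycle $(1)\Rightarrow(3)\Rightarrow(2)\Rightarrow(1)$. For $(1)\Rightarrow(3)$: if $G$ has property~$(T)$ then $H^{1}(G,\pi)=0$ for every unitary representation $\pi$ by the Delorme--Guichardet theorem (see~\cite{bdv}); since $\overline{H^{1}}(G,\pi)$ is a quotient of $H^{1}(G,\pi)$, it too vanishes for every $\pi$, so by the dictionary every reasonable $\mu$ admits only the zero $\mu$-harmonic cocycle. The implication $(3)\Rightarrow(2)$ needs only that at least one reasonable measure exists, which is readily arranged. For $(2)\Rightarrow(1)$: if some reasonable $\mu$ admits only the zero $\mu$-harmonic cocycle, then $\overline{H^{1}}(G,\pi)=0$ for \emph{every} $\pi$ by the dictionary, in particular for every irreducible $\pi$, whence Shalom's theorem \cite{shalom} gives property~$(T)$.

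The content of the argument is thus entirely concentrated in Theorem~\ref{T:harmoniccocycles}, which is proved separately; the present deduction is formal. The one point requiring care is the quantifier over representations: Shalom's criterion is phrased for irreducible $\pi$, whereas the dictionary produces information about all $\pi$. This mismatch is harmless here, because $(2)\Rightarrow(1)$ only restricts from all representations to irreducible ones, while in $(1)\Rightarrow(3)$ the passage from irreducible to arbitrary representations is supplied by Delorme--Guichardet rather than by Shalom. The one place I would write out the details is the verification that the two assignments of the dictionary are genuinely inverse---that is, that existence and uniqueness in Theorem~\ref{T:harmoniccocycles} are exactly what the bijection requires.
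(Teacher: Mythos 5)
Your proposal is correct and takes essentially the same route as the paper: both deduce the theorem formally from Theorem \ref{T:harmoniccocycles} (whose existence and uniqueness clauses are precisely your ``dictionary'') combined with the characterization of property $(T)$ via reduced cohomology. The only differences are bookkeeping: for $(1)\Rightarrow(3)$ the paper quotes the vanishing of \emph{reduced} cohomology under $(T)$ (Theorem 4.2 of \cite{Sha06}) and invokes the uniqueness of harmonic representatives, whereas you pass through Delorme--Guichardet and the quotient map from $H^{1}(G,\pi)$ onto $\overline{H^{1}}(G,\pi)$, and the paper proves $(2)\Rightarrow(1)$ by contraposition using the existence clause, which is the same content as your direct argument via the dictionary and Shalom's irreducible-representation criterion.
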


In particular, the lack of property $(T)$ is equivalent to the existence of some unitary representation and some reasonable probability measure such that there is a nonzero harmonic cocycle.

\begin{proof}
Assume $G$ has property $(T)$.  Let $\mu$ be a reasonable probability measure on $G$ and $\varphi$ a $\mu$-harmonic cocycle for some representation $\pi : G \to \mathcal{U}(\mathcal{H})$.  Since $G$ has $(T)$, the only reduced cohomology class in $\overline{H^{1}}(G,\pi)$ is the equivalence class of $0$ (Theorem 4.2 \cite{Sha06}).  As $\varphi$ is a cocycle, it is then almost cohomologous to $0$.  However, both $\varphi$ and $0$ are $\mu$-harmonic so Theorem \ref{T:harmoniccocycles} implies that $\varphi = 0$.  Hence the first condition implies the third, which obviously implies the second.

Conversely, assume that $G$ does not have property $(T)$.  Then there is some unitary representation $\pi$ with nontrivial reduced cohomology.  Let $\varphi$ be a cocycle that is not almost cohomologous to zero and let $\mu$ be a reasonable measure.  By Theorem \ref{T:harmoniccocycles}, $\varphi$ is almost cohomologous to a $\mu$-harmonic cocycle $\beta$.  But $\beta$ cannot be zero since $\varphi$ is not almost cohomologous to zero.  Hence the second condition implies the first.
\end{proof}

We now turn to the proof of Theorem \ref{T:harmoniccocycles}.

\subsection{Uniqueness of Harmonic Cocycles}

First observe that:
\begin{lemma}\label{L:hare}
A cocycle $\beta$ is harmonic if and only if $\int_{G} \beta(g) d\mu(g) = 0$.
\end{lemma}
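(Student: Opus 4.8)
The plan is to unwind the definitions and exploit the cocycle identity directly.

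The plan is to unwind the definitions and reduce the statement to the cocycle identity together with the facts that $\mu$ is a probability measure and that each $\pi(g)$ is unitary. First I would substitute the cocycle relation $\beta(gg') = \pi(g)\beta(g') + \beta(g)$ into the integral appearing in the definition of harmonicity and pull the $\pi(g)$ and $\beta(g)$ terms outside. Since $\pi(g)$ does not depend on the integration variable $g'$ and $\int_{G} d\mu = 1$, this yields
\[
\int_{G} \beta(gg')\,d\mu(g') = \pi(g)\int_{G} \beta(g')\,d\mu(g') + \beta(g).
\]
The harmonicity condition $\beta(g) = \int_{G} \beta(gg')\,d\mu(g')$ therefore becomes $\pi(g)\int_{G} \beta(g')\,d\mu(g') = 0$ for every $g$. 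Because $\pi(g)$ is unitary, hence injective, this holds for all $g$ exactly when $\int_{G} \beta(g')\,d\mu(g') = 0$, which is the claim.

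The one point requiring genuine care --- and the reason the finite second moment hypothesis enters --- is verifying that the vector-valued integral $\int_{G} \beta(g')\,d\mu(g')$ is well-defined (Bochner integrable), so that the manipulations above are legitimate and $\pi(g)$ may indeed be moved across the integral sign. I would establish this by first showing that cocycles grow at most linearly in word length: iterating $\beta(g_{1}g_{2}) = \pi(g_{1})\beta(g_{2}) + \beta(g_{1})$ and using that $\pi$ is norm-preserving gives $\|\beta(g)\| \le C|g|$, where $C = \sup_{k \in K}\|\beta(k)\|$ is finite by continuity of $\beta$ on the compact generating set $K$. Then $\int_{G} \|\beta(g)\|\,d\mu(g) \le C\int_{G} |g|\,d\mu(g)$, and the finite first moment (which follows from the finite second moment via Cauchy--Schwarz, since $\mu$ is a probability measure) guarantees convergence.

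I expect no serious obstacle: the mathematical content is essentially a one-line computation once integrability is in hand. The only subtleties worth flagging are that interchanging $\pi(g)$ with the integral requires $\pi(g)$ to be a bounded linear operator acting on an honest Bochner integral (both of which hold), and that the final equivalence uses the \emph{invertibility} of $\pi(g)$ rather than merely its boundedness.
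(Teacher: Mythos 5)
Your proof is correct and follows essentially the same route as the paper: substitute the cocycle identity $\beta(gg') = \pi(g)\beta(g') + \beta(g)$ into the harmonicity integral, pull out $\beta(g)$ and $\pi(g)$, and use unitarity of $\pi(g)$ to conclude. Your additional verification that $\int_{G}\|\beta(g)\|\,d\mu(g) < \infty$ via the linear growth bound $\|\beta(g)\| \le C|g|$ is a welcome extra level of care (the paper records that same bound only after the lemma), but it does not change the argument.
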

\begin{proof}
By the cocycle identity:
\[
\int_{G} \beta(gg^{\prime}) d\mu(g^{\prime}) = \int_{G} \beta(g) + \pi(g)\beta(g^{\prime}) d\mu(g^{\prime})
= \beta(g) + \pi(g) \int_{G} \beta(g^{\prime}) d\mu(g^{\prime}) \qedhere
\]
\end{proof}

If $\beta$ is a cocycle then the cocycle identity readily implies that
\[
\| \beta(g) \| \leq | g |~\sup_{s \in S} \| \beta(s) \|
\]
where $| g |$ is the word length in $G$ relative to a compact generating set $S$.  Since $\sup_{s \in S} \| \beta(s) \|$ is finite (as $\beta$ is continuous and $S$ is compact), we find that if $\mu$ has finite second moment then
\[
\int_{G} \|\beta(g)\|^{2}~d\mu(g) \leq C \int_{G} | g |^{2}~d\mu(g) < \infty
\]
and therefore Theorem \ref{T:harmoniccocycles} follows immediately from:

\begin{theorem}\label{T:harmoniccocycles2}
In any reduced cohomology class $[\beta] \in \overline{H^{1}}(G,\pi)$ that contains an $L^{2}(\mu)$ representative there exists a unique $\mu$-harmonic representative.
\end{theorem}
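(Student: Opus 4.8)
The plan is to characterize the $\mu$-harmonic representative variationally, as the minimizer of the energy functional $E(\varphi) = \int_{G} \|\varphi(g)\|^{2}\, d\mu(g)$ over the reduced cohomology class, and to recognize harmonicity as the associated orthogonality condition. First I would record the variational identity: for an $L^{2}(\mu)$ cocycle $\varphi$ and any $v \in \mathcal{H}$, expanding $E(\varphi + t(\pi(\cdot)v - v))$ and using the cocycle identity in the form $\varphi(g^{-1}) = -\pi(g^{-1})\varphi(g)$ together with the symmetry of $\mu$ gives $\int_{G} \langle \varphi(g), \pi(g)v - v\rangle\, d\mu(g) = -2\,\langle \int_{G}\varphi\, d\mu,\, v\rangle$. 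Hence the first-order condition for a minimizer is $\int_{G}\varphi\,d\mu = 0$, which by Lemma \ref{L:hare} is precisely $\mu$-harmonicity. Equivalently, an $L^{2}(\mu)$ cocycle is $\mu$-harmonic if and only if it is $L^{2}(\mu)$-orthogonal to every coboundary $\pi(\cdot)v - v$.

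Uniqueness is the easy half. If $\beta_{1},\beta_{2}$ are both $\mu$-harmonic representatives of $[\beta]$, then $\gamma = \beta_{1}-\beta_{2}$ is a $\mu$-harmonic cocycle lying in $\overline{B^{1}(G,\pi)}$. By the orthogonality just established, $\gamma$ is $L^{2}(\mu)$-orthogonal to every coboundary. On the other hand, writing $\gamma(g) = \lim_{n}(\pi(g)v_{n}-v_{n})$ uniformly on compact sets, the growth bound $\|\pi(g)v_{n}-v_{n}\| \le |g|\,\sup_{s\in S}\|\pi(s)v_{n}-v_{n}\|$ (for the coboundary cocycle) together with finite second moment lets me apply dominated convergence to conclude $\pi(\cdot)v_{n}-v_{n}\to\gamma$ in $L^{2}(\mu)$. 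Thus $\gamma$ lies in the $L^{2}(\mu)$-closed span of the coboundaries and is simultaneously orthogonal to it, so $\|\gamma\|_{L^{2}(\mu)}=0$; hence $\gamma=0$ $\mu$-almost everywhere, and since $\operatorname{supp}\mu$ generates $G$, the cocycle identity forces $\gamma\equiv0$.

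For existence I would first solve the problem inside $L^{2}(\mu;\mathcal{H})$ and then promote the solution to a genuine cocycle. Let $M v = \int_{G}\pi(g)v\,d\mu(g)$ be the Markov operator; symmetry makes $M$ self-adjoint with $\|M\|\le1$, and since $\operatorname{supp}\mu$ generates $G$ one has $\ker(I-M)=\mathcal{H}^{G}$, the invariant vectors. The same symmetry computation as above shows $b_{0} = \int_{G}\beta\,d\mu$ is orthogonal to $\mathcal{H}^{G}$, so $b_{0}\in\overline{\operatorname{ran}(I-M)}$ and we may choose $v_{n}$ with $(I-M)v_{n}\to b_{0}$. The genuine cocycles $\beta_{n}=\beta+(\pi(\cdot)v_{n}-v_{n})$ then satisfy $\int_{G}\beta_{n}\,d\mu = b_{0}-(I-M)v_{n}\to 0$: they are asymptotically harmonic. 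Equivalently, $\beta_{n}$ is a minimizing sequence for $E$ on the affine set $\beta+\overline{\{\pi(\cdot)v-v\}}\subseteq L^{2}(\mu;\mathcal{H})$, so by the parallelogram law $\beta_{n}$ is Cauchy in $L^{2}(\mu)$ and converges there to the norm-minimizing element $f^{*}$.

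The promotion of $f^{*}$ to an honest cocycle is where I expect the real difficulty, and it is the \textbf{main obstacle}. The differences $\beta_{n}-\beta_{m}$ are coboundaries that are Cauchy in $L^{2}(\mu)$, and it suffices to upgrade this to Cauchy uniformly on compact sets, after which the limit $\beta^{*}$ is a genuine cocycle with $\beta^{*}-\beta\in\overline{B^{1}(G,\pi)}$ and $\int_{G}\beta^{*}\,d\mu=\lim_{n}\int_{G}\beta_{n}\,d\mu=0$ (dominated convergence via the growth bound and finite second moment), hence the desired $\mu$-harmonic representative. By the growth bound, uniform control on compacta reduces to the comparison estimate $\sup_{s\in S}\|\pi(s)w-w\|^{2}\le C\int_{G}\|\pi(g)w-w\|^{2}\,d\mu(g)$ for coboundaries. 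Here I would use admissibility: the spectral inequality $1-\lambda^{k}\le k(1-\lambda)$ on $[-1,1]$ yields $\int\|\pi(g)w-w\|^{2}\,d\mu^{*k}(g)\le k\int\|\pi(g)w-w\|^{2}\,d\mu(g)$, and since some convolution power $\mu^{*k}$ is nonsingular it dominates a piece $\rho\,dm_{G}$ of Haar measure on a set of positive measure; feeding this into the relation $\|\pi(s)w-w\|=\|(\pi(gs)w-w)-(\pi(g)w-w)\|$ and integrating $g$ over that set transfers the $L^{2}(\mu)$ control to the generators $s\in S$. Making this transfer quantitative — controlling the translated integral $\int\|\pi(gs)w-w\|\,\rho(g)\,dm_{G}(g)$ uniformly in $s\in S$ — is the delicate technical point on which the argument turns.
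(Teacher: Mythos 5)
Your uniqueness half is correct and is, modulo phrasing, the paper's own argument: your identity $\int_G\langle\varphi(g),\pi(g)v-v\rangle\,d\mu(g)=-2\big\langle\int_G\varphi\,d\mu,\,v\big\rangle$ is the cross-term computation of Lemmas \ref{L:5} and \ref{L:actualprop} specialized to symmetric $\mu$, and your ``orthogonal to the closure while lying in the closure'' step is the same Hilbert-space geometry as the paper's computation $E^{\mu}_{\phi-\varphi}=\lim_n E^{\mu}_{0}(v_n)=0$ in Proposition \ref{P:uniquemin}. The genuine gap is in the existence half, at the word ``Equivalently.'' Asymptotic harmonicity of $\beta_n=\beta+(\pi(\cdot)v_n-v_n)$ is \emph{not} equivalent to being an energy-minimizing sequence, and the implication you need is the one that fails. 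Indeed $\|\beta_n-\beta_m\|^2_{L^2(\mu)}=2\langle(I-M)(v_n-v_m),v_n-v_m\rangle=2\|(I-M)^{1/2}(v_n-v_m)\|^2$, whereas your selection of $v_n$ constrains only $(I-M)v_n\to b_0$. On $(\mathcal{H}^{G})^{\perp}$ the operator $(I-M)^{1/2}$ has unbounded inverse whenever $I-M$ has no spectral gap at $0$ --- precisely the interesting case, e.g.\ when $\pi$ has almost invariant but no invariant vectors --- so control of $(I-M)v_n$ gives no control of $(I-M)^{1/2}v_n$. Concretely, pick mutually orthogonal unit vectors $\eta_n$ lying in spectral windows $[a_n,2a_n]$ of $I-M$ with $a_n\to 0$ and replace $v_n$ by $v_n+a_n^{-1/2}\eta_n$: the constraint $(I-M)v_n\to b_0$ survives (the perturbation of $(I-M)v_n$ has norm at most $2\sqrt{a_n}$), yet the vectors $(I-M)^{1/2}a_n^{-1/2}\eta_n$ are orthogonal with norms in $[1,\sqrt2\,]$, so the resulting $\beta_n$ is not $L^2(\mu)$-Cauchy and its energy stays bounded away from the infimum. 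Thus the parallelogram-law step collapses for your sequence. The repair is to choose $v_n$ as a genuine minimizing sequence, $E^{\mu}_{\beta}(v_n)\downarrow\inf_{v}E^{\mu}_{\beta}(v)$ --- exactly what the paper does --- or to make a spectral choice such as $v_n=(I-M)^{-1}P_{[1/n,2]}\,b_0$ (with $P$ the spectral projection of $I-M$), for which $(I-M)^{1/2}v_n$ does converge.

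Even with that repaired, your plan still hangs on the unproved comparison inequality $\sup_{s\in S}\|\pi(s)w-w\|^{2}\leq C\int_G\|\pi(g)w-w\|^{2}\,d\mu(g)$, which you yourself flag as the crux, so the proposal is incomplete at a second essential point. It is worth knowing that the paper's proof avoids any such uniform estimate: from the $L^2(\mu)$-Cauchy minimizing sequence it extracts a pointwise strong limit $\beta(g)=\lim_n \varphi(g)+\pi(g)v_n-v_n$ ($\mu$-a.e., hence --- since the set of $g$ where the limit exists is a subgroup, and by admissibility such a conull subgroup is all of $G$ --- everywhere), which by the paper's definition is a cocycle almost cohomologous to $\varphi$; harmonicity of the limit is then established not from uniform-on-compacts convergence but from the vanishing of every directional derivative $D_{w}E^{\mu}_{\beta}$, via the dominated-convergence continuity statements (Lemmas \ref{L:energycont} and \ref{L:derivenergycont}) and the quantitative contradiction with $c=\delta/(8\|w\|^{2})$, followed by Proposition \ref{P:minharmonic}. (The paper also splits $\mathcal{H}=\mathcal{H}_{fixed}\oplus\mathcal{H}_{unfixed}$ in a separate step; in your framework that is absorbed by $\ker(I-M)=\mathcal{H}^{G}$ and $b_0\perp\mathcal{H}^{G}$, which is a genuine small simplification.) So: fix the minimizing-sequence selection, and for the final step either prove your comparison estimate or switch to the paper's pointwise-limit-plus-derivative argument.
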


The remainder of the section is devoted to the proof.

\subsection{The Energy Function}

The key quantity in what follows is the energy of a cocycle.  The ideas involved in the study of energy decrementing go back to Mok \cite{mok} and play a key role in Kleiner's work \cite{kleiner}.

\begin{definition}
Let $G$ be a group and $\mu$ a probability measure on $G$.  Let $\pi : G \to B(\mathcal{H})$ be a unitary representation of $G$ on a Hilbert space $\mathcal{H}$.  Let $\beta : G \to \mathcal{H}$ be a cocycle.

The \textbf{$\mu$-energy of $\beta$} is defined to be
\[
E_{\beta}^{\mu} := \int_{G} \| \beta(g) \|^{2} d\mu(g)
\]
and the \textbf{$\mu$-energy function for $\beta$} is $E_{\beta}^{\mu} : \mathcal{H} \to [0,\infty]$ given by
\[
E_{\beta}^{\mu}(v) := \int_{G} \| \pi(g)v - v + \beta(g) \|^{2} d\mu(g).
\]
\end{definition}

The energy of a cocycle is the value of its energy function at $0$, that is $E_{\beta}^{\mu} = E_{\beta}^{\mu}(0)$, and in fact for $v \in \mathcal{H}$ the cocycle $\beta_{v}(g) = \beta(g) + \pi(g)v - v$ (which is cohomologous to $\beta$) has energy
\[
E_{\beta_{v}}^{\mu} = E_{\beta}^{\mu}(v)
\]

In order that the energy be finite, it is sufficient that $\mu$ be compactly supported (since $\| \pi(g)v - v + \beta(g)\| < \infty$ for each $g$).  More generally, since
\[
\| \pi(g)v - v + \beta(g) \| \leq 2 \| v \| + \| \beta(g) \|
\]
it is sufficient that $\| \beta(g) \| \in L^{2}(G,\mu)$.  The hypotheses of Theorem \ref{T:harmoniccocycles2} then ensure that $E_{\beta}^{\mu}$ is finite (which we will use implicitly throughout what follows).

\subsection{Properties of the Energy Function}

Let $\check{\mu}$ denote the symmetric opposite of $\mu$: $d\check{\mu}(g) = d\mu(g^{-1})$.
\begin{lemma}\label{L:5}
\begin{align*}
E_{\beta}^{\mu}(v &+ w) - E_{\beta}^{\mu}(v) = \int_{G} \| \pi(g)w - w \|^{2} d\mu(g)
- 2 \int_{G} \langle \pi(g)v - v + \beta(g), w \rangle d(\mu+\check{\mu})(g)
\end{align*}
\end{lemma}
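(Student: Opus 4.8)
The plan is to expand the definition of $E_{\beta}^{\mu}(v+w)$ directly and read off the two terms. Writing $\xi(g) = \pi(g)v - v + \beta(g)$ and $\eta(g) = \pi(g)w - w$, linearity of $\pi(g)$ gives $\pi(g)(v+w) - (v+w) + \beta(g) = \xi(g) + \eta(g)$, so that
\[
E_{\beta}^{\mu}(v+w) = \int_{G} \|\xi(g) + \eta(g)\|^{2}\, d\mu(g).
\]
Expanding $\|\xi + \eta\|^{2} = \|\xi\|^{2} + 2\langle \xi, \eta\rangle + \|\eta\|^{2}$ (the representation being orthogonal, so the inner product is real and symmetric) and subtracting $E_{\beta}^{\mu}(v) = \int_{G} \|\xi(g)\|^{2}\, d\mu(g)$ leaves
\[
E_{\beta}^{\mu}(v+w) - E_{\beta}^{\mu}(v) = \int_{G} \|\pi(g)w - w\|^{2}\, d\mu(g) + 2\int_{G} \langle \xi(g), \pi(g)w - w\rangle\, d\mu(g).
\]
The first integral is already the first term in the statement, so everything reduces to showing that the cross term equals $-2\int_{G} \langle \xi(g), w\rangle\, d(\mu+\check{\mu})(g)$. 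Finiteness of $E_{\beta}^{\mu}$, guaranteed under the hypotheses of Theorem \ref{T:harmoniccocycles2}, places both $\xi$ and $\eta$ in $L^{2}(\mu)$, so Cauchy--Schwarz justifies splitting these integrals.

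For the cross term I would split $\langle \xi(g), \pi(g)w - w\rangle = \langle \xi(g), \pi(g)w\rangle - \langle \xi(g), w\rangle$ and treat the two pieces separately. The piece $-\int_{G} \langle \xi(g), w\rangle\, d\mu(g)$ already has the right shape. For the other piece, unitarity lets me move $\pi(g)$ across the inner product, $\langle \xi(g), \pi(g)w\rangle = \langle \pi(g^{-1})\xi(g), w\rangle$, and the goal becomes computing $\pi(g^{-1})\xi(g)$.

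The key step --- and the only part that is not mechanical --- is the identity $\pi(g^{-1})\xi(g) = -\xi(g^{-1})$. This is where the cocycle structure enters essentially: since $\beta(e) = 0$, the cocycle identity at $h = g^{-1}$ gives $\pi(g)\beta(g^{-1}) = -\beta(g)$, equivalently $\pi(g^{-1})\beta(g) = -\beta(g^{-1})$; combined with $\pi(g^{-1})(\pi(g)v - v) = v - \pi(g^{-1})v = -(\pi(g^{-1})v - v)$, this yields the claimed sign-flip. Hence $\int_{G} \langle \xi(g), \pi(g)w\rangle\, d\mu(g) = -\int_{G} \langle \xi(g^{-1}), w\rangle\, d\mu(g)$, and the change of variables $g \mapsto g^{-1}$ (i.e.\ the definition $d\check{\mu}(g) = d\mu(g^{-1})$) turns the right-hand side into $-\int_{G} \langle \xi(g), w\rangle\, d\check{\mu}(g)$. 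Adding back the $-\int_{G} \langle \xi(g), w\rangle\, d\mu(g)$ piece gives exactly $-\int_{G} \langle \xi(g), w\rangle\, d(\mu+\check{\mu})(g)$, and multiplying by $2$ completes the identification. The main obstacle is thus isolating and proving the sign-flip identity; once that is in hand, the symmetrization against $\mu + \check{\mu}$ is automatic.
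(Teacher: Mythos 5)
Your proof is correct and follows essentially the same route as the paper's: the same expansion of the squared norm, the same splitting of the cross term, and the same use of unitarity plus the cocycle identity (via $\beta(e)=0$) to obtain the sign-flip $\pi(g^{-1})\bigl(\pi(g)v - v + \beta(g)\bigr) = -\bigl(\pi(g^{-1})v - v + \beta(g^{-1})\bigr)$, followed by the change of variables producing $\check{\mu}$. Your packaging of the computation as a single identity for $\xi$ is a slightly cleaner presentation of the identical argument.
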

\begin{proof}
\begin{align*}
E_{\beta}^{\mu}(v + w) - E_{\beta}^{\mu}(v) &= \int_{G} \| \pi(g)v - v + \beta(g) + \pi(g)w - w  \|^{2} - \| \pi(g)v - v + \beta(g) \|^{2} d\mu(g) \\
&= \int_{G} \| \pi(g)w - w \|^{2} + 2 \langle \pi(g)v - v + \beta(g), \pi(g)w - w \rangle d\mu(g) \\
\end{align*}
and using the cocycle identity
\begin{align*}
\langle \pi(g) v - v + \beta(g), \pi(g) w \rangle &= \langle v - \pi(g^{-1})v + \pi(g^{-1})\beta(g), w \rangle \\
&= \langle v - \pi(g^{-1})v + \beta(g^{-1}g) - \beta(g^{-1}), w\rangle \\
&= \langle v - \pi(g^{-1})v - \beta(g^{-1}), w \rangle
\end{align*}
since $\beta(e) = \beta(e) + \pi(e)\beta(e) = 2\beta(e)$ so $\beta(e) = 0$.
Now since
\begin{align*}
\int_{G} \langle \pi(g)v - v + \beta(g), \pi(g)w \rangle d\mu(g) 
&= \int_{G} \langle v - \pi(g^{-1})v - \beta(g^{-1}), w \rangle d\mu(g) \\
&= - \int_{G} \langle \pi(g)v - v + \beta(g), w \rangle d\check{\mu}(g)
\end{align*}
we have that
\begin{align*}
E_{\beta}^{\mu}(v + w) - E_{\beta}^{\mu}(v) &=
\int_{G} \| \pi(g)w - w \|^{2} d\mu(g) \\
&- 2 \int_{G} \langle \pi(g)v - v + \beta(g), w \rangle d\mu(g) - 2 \int_{G} \langle \pi(g)v - v + \beta(g), w \rangle d\check{\mu}(g). \qedhere
\end{align*}
\end{proof}

A useful special case is the following:
\begin{lemma}\label{L:actualprop}
\[
E_{\beta}^{\mu}(v) - E_{\beta}^{\mu} = \int_{G} \| \pi(g)v - v \|^{2} d\mu(g) - 2 \int_{G} \langle \beta(g) , v \rangle d(\mu + \check{\mu})(g)
\]
\end{lemma}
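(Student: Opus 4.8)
The plan is to obtain this identity as a direct specialization of Lemma \ref{L:5}. That lemma computes the difference $E_{\beta}^{\mu}(v+w) - E_{\beta}^{\mu}(v)$ for arbitrary $v, w \in \mathcal{H}$, and the present statement is precisely the case in which the base point is taken to be the origin.

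Concretely, I would apply Lemma \ref{L:5} with $v$ replaced by $0$ and $w$ replaced by $v$. On the left-hand side this produces $E_{\beta}^{\mu}(0 + v) - E_{\beta}^{\mu}(0)$, which equals $E_{\beta}^{\mu}(v) - E_{\beta}^{\mu}$ since $E_{\beta}^{\mu}(0) = E_{\beta}^{\mu}$ by the definition of the energy. On the right-hand side, the first integrand $\| \pi(g)w - w \|^{2}$ becomes $\| \pi(g)v - v \|^{2}$, and in the inner-product term the vector $\pi(g)\cdot 0 - 0 + \beta(g)$ collapses to $\beta(g)$, leaving the term $-2\int_{G} \langle \beta(g), v \rangle \, d(\mu + \check{\mu})(g)$. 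Combining these gives exactly the asserted formula.

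There is essentially no obstacle here: the only facts to verify are that $\pi(g)0 - 0 = 0$ by linearity of each $\pi(g)$, and that evaluating the energy function at the origin recovers the bare energy $E_{\beta}^{\mu}$, both of which are immediate from the definitions. All of the genuine computation, in particular the manipulation via the cocycle identity that produces the $\check{\mu}$ contribution, was already carried out in the proof of Lemma \ref{L:5}; this lemma simply records its most frequently used instance.
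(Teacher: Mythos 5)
Your proof is correct and matches the paper's intent exactly: the paper introduces this lemma as ``a useful special case'' of Lemma \ref{L:5}, and your substitution ($v \mapsto 0$, $w \mapsto v$, together with $E_{\beta}^{\mu}(0) = E_{\beta}^{\mu}$ and $\pi(g)0 - 0 = 0$) is precisely that specialization, spelled out.
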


\begin{lemma}\label{L:energycont}
The energy function is continuous: if $\beta_{n}$ are cocycles such that $\beta_{n} \to \beta$ uniformly over compact sets in $G$ (and strongly in $\mathcal{H}$) and such that the $\beta_{n}$ and $\beta$ are uniformly in $L^{2}(\mu \times \| \cdot \|)$ (all dominated by the same $L^2$ function) then for all $v \in \mathcal{H}$
\[
E_{\beta}^{\mu}(v) = \lim_{n} E_{\beta_{n}}^{\mu}(v)
\]
and in particular
\[
E_{\beta}^{\mu} = \lim_{n} E_{\beta_{n}}^{\mu}.
\]
If $\mu$ is compactly supported then the $L^{2}(\mu)$ requirement is satisfied automatically for all cocycles and the energy function is continuous everywhere.
\end{lemma}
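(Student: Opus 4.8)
The plan is a direct appeal to the dominated convergence theorem applied to the integrands defining the energy. Fix $v \in \mathcal{H}$ and set $f_{n}(g) = \| \pi(g)v - v + \beta_{n}(g) \|^{2}$ and $f(g) = \| \pi(g)v - v + \beta(g) \|^{2}$, so that $E_{\beta_{n}}^{\mu}(v) = \int_{G} f_{n}\, d\mu$ and $E_{\beta}^{\mu}(v) = \int_{G} f\, d\mu$; the goal is to show $\int_{G} f_{n}\, d\mu \to \int_{G} f\, d\mu$. First I would establish pointwise convergence: since $\beta_{n} \to \beta$ uniformly over compact sets, in particular $\beta_{n}(g) \to \beta(g)$ strongly in $\mathcal{H}$ for every fixed $g$, so $\pi(g)v - v + \beta_{n}(g) \to \pi(g)v - v + \beta(g)$ strongly, and continuity of the norm gives $f_{n}(g) \to f(g)$ for every $g \in G$.

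Next comes the domination step, which is where the standing hypotheses are actually used. Let $F \in L^{2}(G,\mu)$ be the common dominating function, so that $\|\beta_{n}(g)\| \leq F(g)$ and $\|\beta(g)\| \leq F(g)$ for $\mu$-almost every $g$. Using $\|\pi(g)v - v\| \leq 2\|v\|$ (as $\pi$ is unitary), I would bound
\[
f_{n}(g) \leq \big( \|\pi(g)v - v\| + \|\beta_{n}(g)\| \big)^{2} \leq \big( 2\|v\| + F(g) \big)^{2} = 4\|v\|^{2} + 4\|v\|\, F(g) + F(g)^{2}.
\]
Because $\mu$ is a probability measure we have $F \in L^{2}(\mu) \subseteq L^{1}(\mu)$ and $F^{2} \in L^{1}(\mu)$, so the right-hand side is an $L^{1}(\mu)$ function independent of $n$. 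Dominated convergence then yields $E_{\beta_{n}}^{\mu}(v) \to E_{\beta}^{\mu}(v)$, and specializing to $v = 0$ gives the stated convergence $E_{\beta}^{\mu} = \lim_{n} E_{\beta_{n}}^{\mu}$ of the energies.

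Finally, for the compactly supported case suppose $\operatorname{supp}(\mu) \subseteq K$ with $K$ compact. Uniform convergence of $\beta_{n} \to \beta$ on $K$ provides an $n_{0}$ with $\sup_{g \in K}\|\beta_{n}(g)\| \leq \sup_{g \in K}\|\beta(g)\| + 1$ for $n \geq n_{0}$; the continuous cocycle $\beta$ and the finitely many $\beta_{n}$ with $n < n_{0}$ are each bounded on the compact set $K$, so a single constant $M$ dominates every $\|\beta_{n}\|$ and $\|\beta\|$ on $\operatorname{supp}(\mu)$. Such a constant lies in $L^{2}(\mu)$ since $\mu$ is a probability measure, so the domination hypothesis is automatic and the continuity conclusion holds for every cocycle. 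The argument is essentially routine, and the only point demanding any care is verifying that the dominating bound is genuinely in $L^{1}(\mu)$, which is exactly what the $L^{2}(\mu)$-domination (equivalently, finite-second-moment) hypothesis guarantees.
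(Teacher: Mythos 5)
Your proof is correct and takes essentially the same approach as the paper: pointwise convergence of the integrands (via continuity of the norm) combined with the common $L^{2}(\mu)$ dominating function and the Dominated Convergence Theorem, with your explicit bound $\big(2\|v\| + F(g)\big)^{2} = 4\|v\|^{2} + 4\|v\|F(g) + F(g)^{2}$ simply filling in the domination detail that the paper leaves implicit. The only minor divergence is in the compactly supported case, where the paper swaps the Dominated Convergence Theorem for uniform convergence on the support of $\mu$, whereas you observe that a constant dominating function exists (handling the finitely many $n < n_{0}$ separately) and reuse the same argument; both are routine and valid.
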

\begin{proof}
The uniform $L^{2}$ requirement (since $v$ is fixed $\beta_{n}(g) + \pi(g)v - v$ are also uniformly in $L^{2}$) allows us to apply the Dominated Convergence Theorem (twice) to conclude that 
\begin{align*}
E_{\beta}^{\mu}(v) &= \int_{G} \| \beta(g) + \pi(g)v - v \|^{2} d\mu(g) = \int_{G} \| \lim_{n} \beta_{n}(g) + \pi(g)v - v \|^{2} d\mu(g) \\
&= \int_{G} \lim_{n} \| \beta_{n}(g) + \pi(g)v - v \|^{2} d\mu(g) \\
&= \lim_{n} \int_{G} \| \beta_{n}(g) + \pi(g)v - v \|^{2} d\mu(g) = \lim_{n} E_{\beta_{n}}^{\mu}(v)
\end{align*}
When $\mu$ is compactly supported the energy of any cocycle is finite (since $\| \beta(g) \|$ must attain its maximum on the support of $\mu$) and since $\beta_{n} \to \beta$ uniformly on compact sets in $G$ the convergence is uniform on the support of $\mu$ meaning the Uniform Convergence Theorem can replace the need for the Dominated Convergence Theorem.
\end{proof}

\subsection{Directional Derivatives}

Consider the \textbf{directional derivative of the energy function}:
\[
D_{w}E_{\beta}^{\mu} = \lim_{t \to 0} \frac{E_{\beta}^{\mu}(tw) - E_{\beta}^{\mu}}{t}
\]

\begin{lemma}\label{L:dirderiv}
\[
D_{w}E_{\beta}^{\mu} = -2 \int_{G} \langle \beta(g), w \rangle d(\mu + \check{\mu})(g)
\]
\end{lemma}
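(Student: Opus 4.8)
The plan is to compute the directional derivative directly from the formula for $E_{\beta}^{\mu}(v) - E_{\beta}^{\mu}$ established in Lemma \ref{L:actualprop}, specializing to $v = tw$. First I would substitute $v = tw$ into that identity to obtain
\[
E_{\beta}^{\mu}(tw) - E_{\beta}^{\mu} = \int_{G} \| \pi(g)(tw) - tw \|^{2} d\mu(g) - 2 \int_{G} \langle \beta(g), tw \rangle d(\mu + \check{\mu})(g).
\]
The second integrand is linear in $t$, so it pulls out a factor of $t$ and contributes exactly $-2t \int_{G} \langle \beta(g), w \rangle d(\mu + \check{\mu})(g)$. The first integrand is quadratic in $t$, since $\| \pi(g)(tw) - tw \|^{2} = t^{2} \| \pi(g)w - w \|^{2}$, so it contributes $t^{2} \int_{G} \| \pi(g)w - w \|^{2} d\mu(g)$.

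Dividing by $t$ then gives
\[
\frac{E_{\beta}^{\mu}(tw) - E_{\beta}^{\mu}}{t} = t \int_{G} \| \pi(g)w - w \|^{2} d\mu(g) - 2 \int_{G} \langle \beta(g), w \rangle d(\mu + \check{\mu})(g),
\]
and taking $t \to 0$ kills the quadratic term, leaving precisely the claimed expression. The only point requiring a moment's care is confirming that each of the two integrals is finite, so that the arithmetic of splitting and taking the limit is legitimate: the quadratic integral $\int_{G} \| \pi(g)w - w \|^{2} d\mu(g)$ is bounded by $4\|w\|^{2}$ since $\pi$ is unitary, and the linear integral is finite because $\beta \in L^{2}(\mu)$ (an $L^{1}$ bound via Cauchy–Schwarz against the fixed vector $w$ suffices) under the running hypotheses.

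I expect no real obstacle here; this lemma is a routine consequence of Lemma \ref{L:actualprop}, and the main point is simply that the energy function is a quadratic polynomial in $t$ along any line through the origin, whose linear coefficient is the directional derivative. The one thing worth flagging for later use is the sign and the appearance of the symmetrized measure $\mu + \check{\mu}$: this is exactly the ingredient that will make the vanishing of all directional derivatives equivalent to $\int_{G} \langle \beta(g), \cdot \rangle d(\mu + \check{\mu})(g) = 0$, which by symmetry of $\mu$ (so $\check{\mu} = \mu$) reduces to $\int_{G} \beta(g) d\mu(g) = 0$, connecting back to the harmonicity criterion of Lemma \ref{L:hare}.
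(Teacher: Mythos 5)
Your proposal is correct and is essentially identical to the paper's proof: both substitute $v = tw$ into Lemma \ref{L:actualprop}, use the homogeneity $\| \pi(g)(tw) - tw \|^{2} = t^{2}\| \pi(g)w - w \|^{2}$ and linearity of the inner-product term, divide by $t$, and let $t \to 0$. Your added remarks on finiteness of the two integrals are a harmless (and correct) extra check that the paper leaves implicit.
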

\begin{proof}
Applying Lemma \ref{L:actualprop} to the directional derivative:
\begin{align*}
D_{w} E_{\beta}^{\mu}
&= \lim_{t\to 0} \frac{1}{t} \Big{(} \int_{G} t^{2} \| \pi(g)w - w \|^{2} d\mu(g)  - 2 \int_{G} t \langle \beta(g), w \rangle d(\mu+\check{\mu})(g) \Big{)} \\
& = -2 \int_{G} \langle \beta(g), w \rangle d(\mu + \check{\mu})(g). \qedhere
\end{align*}
\end{proof}

\begin{lemma}\label{L:diffderiv}
\[
E_{\beta}^{\mu}(v) - E_{\beta}^{\mu} = \int_{G} \| \pi(g)v - v \|^{2} d\mu(g) + D_{v}E_{\beta}^{\mu}.
\]
\end{lemma}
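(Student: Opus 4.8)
The plan is to obtain this identity by directly combining the two preceding lemmas, with essentially no new computation. Lemma \ref{L:actualprop} already expresses the energy increment $E_{\beta}^{\mu}(v) - E_{\beta}^{\mu}$ as the sum of a purely quadratic term, $\int_{G} \| \pi(g)v - v \|^{2}\,d\mu(g)$, and a term that is linear in $v$, namely $-2\int_{G}\langle \beta(g),v\rangle\,d(\mu+\check\mu)(g)$. The quadratic term is exactly what appears on the right-hand side of the claimed identity, so the entire task reduces to recognizing the linear term as the directional derivative $D_{v}E_{\beta}^{\mu}$.

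To make this identification, I would simply invoke Lemma \ref{L:dirderiv} with the direction $w$ taken to be $v$ itself. That lemma gives $D_{v}E_{\beta}^{\mu} = -2\int_{G}\langle \beta(g),v\rangle\,d(\mu+\check\mu)(g)$, which is precisely the linear term produced by Lemma \ref{L:actualprop}. Substituting this expression in place of the integral term in Lemma \ref{L:actualprop} then yields the statement $E_{\beta}^{\mu}(v) - E_{\beta}^{\mu} = \int_{G} \| \pi(g)v - v \|^{2}\,d\mu(g) + D_{v}E_{\beta}^{\mu}$ immediately.

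I do not anticipate any genuine obstacle here: both ingredients are already established, and the argument is a one-line substitution. The only conceptual point worth noting is why this works cleanly, namely that the energy function is an exact quadratic functional of $v$ (the quadratic part being $\int_{G}\|\pi(g)v-v\|^{2}\,d\mu(g)$, which is homogeneous of degree two since $\pi$ is a representation). Consequently its second-order Taylor expansion about $0$ is exact, with no higher-order remainder, so the increment splits precisely into its degree-two part and its degree-one part, the latter being captured by the directional derivative. This identity is the form of the energy increment that will be most convenient for the subsequent energy-decrementing argument, since it isolates the derivative term that must vanish at a harmonic (energy-minimizing) representative.
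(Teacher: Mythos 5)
Your proof is correct and is exactly the paper's argument: the paper's own proof consists of citing Lemma \ref{L:dirderiv} together with Lemma \ref{L:actualprop}, i.e.\ substituting the directional-derivative formula (with $w=v$) into the energy-increment identity. Your additional remark about the exact quadratic structure of the energy functional is a fine piece of context but not needed for the substitution.
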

\begin{proof}
Lemma \ref{L:dirderiv} and Lemma \ref{L:actualprop}.
\end{proof}

\begin{lemma}\label{L:Dcw}
For $c \in \mathbb{R}$,
$D_{cv}E_{\beta}^{\mu} = c D_{v}E_{\beta}^{\mu}$.
\end{lemma}
\begin{proof}
By Lemma \ref{L:dirderiv} (twice),
\begin{align*}
D_{cv}E_{\beta}^{\mu} &= -2 \int_{G} \langle \beta(g), cv \rangle d(\mu + \check{\mu})(g) \\
&= -2 c \int_{G} \langle \beta(g), w \rangle d(\mu + \check{\mu})(g) = c D_{v}E_{\beta}^{\mu}. \qedhere
\end{align*}
\end{proof}

\begin{lemma}\label{L:derivenergycont}
The derivative of the energy function is continuous for each fixed direction: if $v \in \mathcal{H}$ and $\beta_{n}$ are cocycles such that $\beta_{n} \to \beta$ uniformly over compact sets in $G$ (and strongly in $\mathcal{H}$) and such that the $\beta_{n}$ and $\beta$ are uniformly in $L^{2}(\mu)$ (all dominated by the same $L^2$ function) then
\[
D_{v}E_{\beta}^{\mu} = \lim_{n} D_{v}E_{\beta_{n}}^{\mu}.
\]
If $\mu$ is compactly supported then the $L^{2}(\mu)$ requirement is satisfied automatically for all cocycles and the derivative of the energy function is continuous everywhere.
\end{lemma}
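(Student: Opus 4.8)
The plan is to reduce the statement to the closed-form expression for the directional derivative supplied by Lemma \ref{L:dirderiv} and then apply the Dominated Convergence Theorem, exactly paralleling the proof of Lemma \ref{L:energycont}. By Lemma \ref{L:dirderiv} the directional derivative is linear in $\beta$, being given by an integral against the fixed measure $\mu + \check{\mu}$:
\[
D_{v}E_{\beta_{n}}^{\mu} = -2 \int_{G} \langle \beta_{n}(g), v \rangle \, d(\mu + \check{\mu})(g)
\qquad\text{and}\qquad
D_{v}E_{\beta}^{\mu} = -2 \int_{G} \langle \beta(g), v \rangle \, d(\mu + \check{\mu})(g).
\]
Thus it suffices to pass the limit inside the integral, i.e.\ to justify interchanging $\lim_{n}$ with $\int_{G} \langle \,\cdot\, , v \rangle \, d(\mu + \check{\mu})$.

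For pointwise convergence of the integrands, strong convergence $\beta_{n}(g) \to \beta(g)$ in $\mathcal{H}$ together with continuity of the inner product against the fixed vector $v$ gives $\langle \beta_{n}(g), v \rangle \to \langle \beta(g), v \rangle$ for every $g$. For domination, Cauchy--Schwarz yields $|\langle \beta_{n}(g), v \rangle| \leq \|v\|\,\|\beta_{n}(g)\| \leq \|v\|\,F(g)$, where $F \in L^{2}(\mu)$ is the common dominating function furnished by the uniform $L^{2}$ hypothesis. Since $\mu$ is a probability measure we have the inclusion $L^{2}(\mu) \subseteq L^{1}(\mu)$, and since $\mu$ is symmetric (being reasonable) we have $\check{\mu} = \mu$ and hence $\mu + \check{\mu} = 2\mu$; consequently $\|v\| F$ is integrable against $\mu + \check{\mu}$. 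The Dominated Convergence Theorem then delivers $D_{v}E_{\beta}^{\mu} = \lim_{n} D_{v}E_{\beta_{n}}^{\mu}$.

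For the compactly supported case the domination hypothesis becomes automatic: a cocycle is continuous, so $\|\beta_{n}(g)\|$ is bounded on the compact support of $\mu$ (this also follows from the word-length bound $\|\beta_{n}(g)\| \leq |g|\sup_{s \in S}\|\beta_{n}(s)\|$ already used in Section \ref{S:harmonic}), and uniform convergence on compact sets means $\beta_{n} \to \beta$ uniformly on the support of $\mu$. One may then invoke the Uniform Convergence Theorem in place of Dominated Convergence, precisely as in Lemma \ref{L:energycont}.

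I expect no genuine obstacle here, since the heavy lifting was done in establishing the formula of Lemma \ref{L:dirderiv}; the only point deserving care is confirming that the $L^{2}$ domination transfers to an $L^{1}$ bound against $\mu + \check{\mu}$ rather than against $\mu$ alone. This is handled by the symmetry of $\mu$ together with the inclusion $L^{2} \subseteq L^{1}$ on a finite-mass space, equivalently by noting that $\mu + \check{\mu}$ is a finite measure with $F \in L^{2}(\mu + \check{\mu})$.
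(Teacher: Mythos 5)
Your argument is correct, but it takes a genuinely different route from the paper. The paper never touches the integral formula of Lemma \ref{L:dirderiv} in this proof: it instead uses Lemma \ref{L:diffderiv} to write, for any cocycle $\varphi$,
\[
D_{v}E_{\varphi}^{\mu} = E_{\varphi}^{\mu}(v) - E_{\varphi}^{\mu} - \int_{G} \| \pi(g)v - v \|^{2}\,d\mu(g),
\]
and then invokes the already-established continuity of the energy function (Lemma \ref{L:energycont}) for both $E_{\beta_{n}}^{\mu}(v)$ and $E_{\beta_{n}}^{\mu}$, so that no new convergence argument is needed at all. You instead exploit the linearity of $\beta \mapsto D_{v}E_{\beta}^{\mu}$ visible in Lemma \ref{L:dirderiv} and run the Dominated Convergence Theorem directly on $\int_{G} \langle \beta_{n}(g), v\rangle\,d(\mu+\check{\mu})(g)$. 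Both proofs are sound. The paper's route is shorter given what is already in hand and makes the lemma a formal corollary of energy continuity; yours is more self-contained and actually needs less than the stated hypothesis---after Cauchy--Schwarz you only use that $\|\beta_{n}(g)\|$ is dominated in $L^{1}(\mu)$, not in $L^{2}(\mu)$, so your argument establishes the conclusion under a weaker integrability assumption. One remark on the point you flagged as "deserving care": the transfer of the domination to $\check{\mu}$ does not in fact require symmetry of $\mu$ (though symmetry is a standing assumption for reasonable measures, so your use of it is legitimate), because the cocycle identity gives $\beta_{n}(g^{-1}) = -\pi(g^{-1})\beta_{n}(g)$, hence $\|\beta_{n}(g^{-1})\| = \|\beta_{n}(g)\| \leq F(g)$, and the same dominating function works for $\check{\mu}$ automatically.
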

\begin{proof}
Fix $v \in \mathcal{H}$.
By Lemma \ref{L:diffderiv} we have that for any cocycle $\varphi$
\[
D_{v} E_{\varphi}^{\mu} = E_{\varphi}^{\mu}(v) - E_{\varphi}^{\mu} - \int_{G} \| \pi(g)v - v\|^{2} d\mu(g)
\]
and so
\[
\lim_{n} D_{v} E_{\beta_{n}}^{\mu} = \lim_{n} \big{(}E_{\beta_{n}}^{\mu}(v) - E_{\beta_{n}}^{\mu}\big{)} - \int_{G} \| \pi(g)v - v\|^{2} d\mu(g)
\]
and therefore, by continuity of energy (Lemma \ref{L:energycont}),
\[
\lim_{n} D_{v} E_{\beta_{n}}^{\mu} = E_{\beta}^{\mu}(v) - E_{\beta}^{\mu} - \int_{G} \| \pi(g)v - v\|^{2} d\mu(g)
= D_{v} E_{\beta}^{\mu}. \qedhere
\]
\end{proof}

\subsection{Minima of the Energy}

\begin{definition}
A cocycle $\beta$ \textbf{minimizes the energy} when
\[
E_{\beta}^{\mu} \leq E_{\beta}^{\mu}(v)
\]
for all $v \in \mathcal{H}$.
\end{definition}

\begin{lemma}\label{L:dirzero}
A cocycle $\beta$ minimizes the energy if and only if all the directional derivatives of the energy function for $\beta$ are zero.
\end{lemma}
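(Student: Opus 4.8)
The plan is to reduce both implications to the exact energy-difference identity of Lemma \ref{L:diffderiv},
\[
E_{\beta}^{\mu}(v) - E_{\beta}^{\mu} = \int_{G} \| \pi(g)v - v \|^{2} d\mu(g) + D_{v}E_{\beta}^{\mu},
\]
in which the integral term is manifestly nonnegative. The converse direction is then immediate: if every directional derivative vanishes, then $D_{v}E_{\beta}^{\mu} = 0$ for all $v$, so the identity gives $E_{\beta}^{\mu}(v) - E_{\beta}^{\mu} = \int_{G} \| \pi(g)v - v \|^{2} d\mu(g) \geq 0$ for every $v$, which is exactly the statement that $\beta$ minimizes the energy.

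For the forward direction I would substitute $v = tw$ into the same identity and apply the homogeneity of Lemma \ref{L:Dcw}, namely $D_{tw}E_{\beta}^{\mu} = t\, D_{w}E_{\beta}^{\mu}$, to express the energy along the line $t \mapsto tw$ as an explicit quadratic in the real parameter $t$:
\[
E_{\beta}^{\mu}(tw) - E_{\beta}^{\mu} = t^{2} \int_{G} \| \pi(g)w - w \|^{2} d\mu(g) + t\, D_{w}E_{\beta}^{\mu}.
\]
Minimization forces the left-hand side to be nonnegative for all $t \in \mathbb{R}$. Writing this as $a t^{2} + b t$ with $a = \int_{G} \| \pi(g)w - w \|^{2} d\mu(g) \geq 0$ and $b = D_{w}E_{\beta}^{\mu}$, an elementary sign analysis shows that a quadratic with nonnegative leading coefficient can remain nonnegative on all of $\mathbb{R}$ only when $b = 0$: if $b \neq 0$, then choosing $t$ small with sign opposite to $b$ makes the linear term dominate and renders the expression negative. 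Hence $D_{w}E_{\beta}^{\mu} = 0$, and since $w$ was arbitrary, every directional derivative vanishes.

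There is no substantive obstacle here; the only point requiring care is the sign analysis in the degenerate case $a = 0$, where the expression reduces to $b t$ and nonnegativity for all $t$ again forces $b = 0$. No differentiability difficulty arises, since Lemma \ref{L:dirderiv} already guarantees that $D_{w}E_{\beta}^{\mu}$ exists and is linear in $w$, so the directional derivative is available from the outset and the argument is purely algebraic.
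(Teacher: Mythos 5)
Your proposal is correct and takes essentially the same approach as the paper: both directions rest on the identity of Lemma \ref{L:diffderiv} together with the homogeneity from Lemma \ref{L:Dcw}, and your converse direction is word-for-word the paper's. The only divergence is in the forward direction, and it is cosmetic: you force $D_{w}E_{\beta}^{\mu}=0$ by noting that the exact quadratic $t \mapsto t^{2}\int_{G}\|\pi(g)w-w\|^{2}\,d\mu(g) + t\,D_{w}E_{\beta}^{\mu}$ cannot remain nonnegative unless its linear coefficient vanishes, whereas the paper argues the contrapositive, using the limit definition of the directional derivative (after flipping the sign of $v$ via Lemma \ref{L:Dcw}) to produce a small $t>0$ with $E_{\beta}^{\mu}(tv)-E_{\beta}^{\mu}<0$; both hinge on the linear term dominating for small $t$, your version being purely algebraic where the paper's invokes the limit.
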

\begin{proof}
Assume that $D_{v}E_{\beta}^{\mu} = 0$ for all $v \in \mathcal{H}$.  Then
by Lemma \ref{L:diffderiv},
\[
E_{\beta}^{\mu}(v) - E_{\beta}^{\mu} = \int_{G} \| \pi(g) v - v \|^{2} d\mu(g) + D_{v}E_{\beta}^{\mu} = \int_{G} \| \pi(g)v - v \|^{2} d\mu(g) \geq 0
\]
so $\beta$ minimizes the energy.

Now suppose that for some $v \in \mathcal{H}$ we have
$D_{v}E_{\beta}^{\mu} \ne 0$.  Replacing $v$ by $-v$ (Lemma \ref{L:Dcw}) if necessary, we may assume that there is $\delta > 0$ such that
\[
D_{v}E_{\beta}^{\mu} \leq - \delta
\]
From the definition of the directional derivative this means there is some $t > 0$ such that
\[
E_{\beta}^{\mu}(tv) - E_{\beta}^{\mu} \leq - t \delta
\]
meaning that $\beta$ does not minimize the energy.
\end{proof}

\subsection{Centers of Cocycles}

\begin{definition}
Let $\beta : G \to \mathcal{H}$ be a cocycle and $\mu$ a probability measure on $G$.  The \textbf{$\mu$-center of $\beta$} is the vector
\[
\mu(\beta) := \int_{G} \beta(g) d\mu(g).
\]
\end{definition}
 A cocycle is then $\mu$-harmonic if and only if its $\mu$-center is $0$ (Lemma \ref{L:hare}).

\begin{definition}
The \textbf{$\mu$-convolution of a vector $v$} is
\[
\pi(\mu)v := \int_{G} \pi(g)v d\mu(g).
\]
\end{definition}

\begin{lemma}\label{L:harmonicdecrease}
Let $\beta : G \to \mathcal{H}$ be a cocycle and let $\mu \in P(G)$.  Write $\overline{\mu} \in P(G)$ for the symmetrization of $\mu$: $\overline{\mu} = \frac{\mu + \check{\mu}}{2}$.  Then
\[
E_{\beta}^{\mu}(\overline{\mu}(\beta)) \leq E_{\beta}^{\mu}
\]
with equality if and only $\overline{\mu}(\beta)$ is a fixed point for the linear $(\pi)$ action of $G$ on $\mathcal{H}$.  If the only fixed point is zero then equality holds if and only if $\beta$ is $\overline{\mu}$-harmonic.
\end{lemma}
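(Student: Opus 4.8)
The plan is to evaluate the energy function at the distinguished vector $w := \overline{\mu}(\beta)$ by feeding it into the second-order expansion already in hand. By Lemma \ref{L:diffderiv},
\[
E_{\beta}^{\mu}(w) - E_{\beta}^{\mu} = \int_{G} \| \pi(g)w - w \|^{2}\, d\mu(g) + D_{w}E_{\beta}^{\mu},
\]
so only two inputs remain. For the derivative, Lemma \ref{L:dirderiv} gives $D_{w}E_{\beta}^{\mu} = -2\langle \int_{G}\beta\, d(\mu+\check{\mu}),\, w\rangle$, and since $\int_{G}\beta\, d(\mu+\check{\mu}) = 2\overline{\mu}(\beta) = 2w$ this is exactly $-4\|w\|^{2}$. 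For the quadratic term I would expand $\|\pi(g)w - w\|^{2} = 2\|w\|^{2} - 2\langle \pi(g)w, w\rangle$ using that $\pi(g)$ is unitary; the integrand is invariant under $g \mapsto g^{-1}$, so integrating against $\mu$ equals integrating against $\overline{\mu}$, yielding $2\|w\|^{2} - 2\langle \pi(\overline{\mu})w, w\rangle$.

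Combining the two inputs collapses everything to the single identity
\[
E_{\beta}^{\mu}(w) - E_{\beta}^{\mu} = -2\langle w + \pi(\overline{\mu})w,\, w\rangle .
\]
Because $\overline{\mu}$ is symmetric, $\pi(\overline{\mu})$ is a self-adjoint contraction with spectrum in $[-1,1]$, so $\mathrm{Id} + \pi(\overline{\mu})$ is a positive operator. Hence the right-hand side is $\le 0$, which is the desired inequality, and equality holds precisely when $(\mathrm{Id}+\pi(\overline{\mu}))w = 0$, i.e. when $\pi(\overline{\mu})w = -w$. Writing out $\langle \pi(\overline{\mu})w, w\rangle = -\|w\|^{2}$ and invoking the equality case of Cauchy--Schwarz together with $\|\pi(g)w\| = \|w\|$ upgrades this to $\pi(g)w = -w$ for $\overline{\mu}$-almost every $g$.

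To match the stated equality condition I would exploit a structural feature of the center: $\overline{\mu}(\beta)$ always lies in the orthogonal complement of the space of $\pi$-invariant vectors. Indeed, the orthogonal projection $P$ onto the invariant vectors commutes with $\pi$, so $P\beta$ is a cocycle for the trivial subrepresentation, i.e. an additive homomorphism; such a map satisfies $P\beta(g^{-1}) = -P\beta(g)$, whence $P\,\overline{\mu}(\beta) = \overline{\mu}(P\beta) = 0$. Consequently $\overline{\mu}(\beta)$ is $\pi$-invariant if and only if it vanishes, so the reverse implication is immediate (a fixed $w$ is forced to be $0$, and $E_{\beta}^{\mu}(0) = E_{\beta}^{\mu}$), and the final clause ``if the only fixed point is zero then equality holds iff $\beta$ is $\overline{\mu}$-harmonic'' follows at once, since equality $\Leftrightarrow w = 0 \Leftrightarrow \overline{\mu}(\beta) = 0$.

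I expect the forward implication of the equality clause to be the main obstacle. The computation only delivers the $(-1)$-eigenvector condition $\pi(\overline{\mu})w = -w$, and to conclude that $w$ is genuinely fixed (equivalently, by the previous paragraph, that $w = 0$) one must exclude $-1$ from the spectrum of $\pi(\overline{\mu})$. A nonzero $(-1)$-eigenvector forces $\mu$ to be concentrated on a coset of the open index-two subgroup $H = \{g : \pi(g)w = w\}$, so this degeneracy cannot occur when $G$ admits no such subgroup (in particular when $G$ is connected); in general this is exactly the place where the admissibility (aperiodicity) of $\mu$ must be brought to bear, and pinning down the precise hypothesis that rules out the periodic case is the delicate part of the argument.
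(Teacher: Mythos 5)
Your proof of the inequality and of the ``fixed point $\Rightarrow$ equality'' direction is correct, and it takes a cleaner route than the paper's. The paper expands $\|\beta(g)+\pi(g)v-v\|^{2}-\|\beta(g)\|^{2}$ pointwise, integrates to obtain
\[
E_{\beta}^{\mu}(v)-E_{\beta}^{\mu} = 2\|v\|^{2}-2\langle \pi(\mu)v,v\rangle-4\langle \overline{\mu}(\beta),v\rangle,
\]
and then applies Cauchy--Schwarz to the term $\langle\pi(\mu)v,v\rangle$; you assemble the equivalent identity from Lemmas \ref{L:diffderiv} and \ref{L:dirderiv} and conclude by positivity of $\mathrm{Id}+\pi(\overline{\mu})$. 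The packaging matters: your version isolates exactly what equality means, namely $\pi(\overline{\mu})w=-w$, equivalently $\pi(g)w=-w$ for $\overline{\mu}$-almost every $g$, where $w=\overline{\mu}(\beta)$. Your observation that $\overline{\mu}(\beta)$ is always orthogonal to the invariant vectors appears nowhere in the paper and is genuinely needed: the paper never addresses the ``fixed point $\Rightarrow$ equality'' direction at all, and without that observation the direction would be vacuously broken, since a nonzero invariant $w=\overline{\mu}(\beta)$ would give $E_{\beta}^{\mu}(w)-E_{\beta}^{\mu}=-4\|w\|^{2}<0$.

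The obstacle you flag in your last paragraph is not a defect of your argument: the forward implication is false as stated, so no argument could close that gap. Concretely, take $G=\mathbb{Z}$, $\mu=\frac{1}{2}(\delta_{1}+\delta_{-1})$ (symmetric, admissible, finite second moment), $\pi(n)=(-1)^{n}$ acting on $\mathbb{R}$, and the coboundary $\beta(n)=\frac{1-(-1)^{n}}{2}\,b$ with $b\neq 0$. Then $\overline{\mu}(\beta)=b$ and $E_{\beta}^{\mu}=\|b\|^{2}=\|b-2b\|^{2}=E_{\beta}^{\mu}(\overline{\mu}(\beta))$, so equality holds; yet $b$ is not invariant, $\beta$ is not $\overline{\mu}$-harmonic, and the only invariant vector is $0$ --- contradicting both equality clauses of the lemma. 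This is exactly your index-two degeneracy: $H=2\mathbb{Z}$ is open of index two and $\mu$ is carried by the nontrivial coset. The paper's own proof goes wrong precisely where you predicted: having shown that equality forces $\pi(g)\overline{\mu}(\beta)$ to be constant over the support of $\mu$, it asserts ``hence $\overline{\mu}(\beta)$ is a fixed point,'' which does not follow --- the constant value is $-\overline{\mu}(\beta)$, not $\overline{\mu}(\beta)$, and $(\mathrm{supp}\,\mu)^{-1}(\mathrm{supp}\,\mu)$ need not generate $G$. So your proposal proves everything in the lemma that is actually true.

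The correct statement, which your computation delivers, is: equality holds if and only if $\pi(g)\overline{\mu}(\beta)=-\overline{\mu}(\beta)$ for $\overline{\mu}$-almost every $g$; this forces $\overline{\mu}(\beta)=0$ under any hypothesis excluding the period-two behaviour (e.g.\ $e\in\mathrm{supp}\,\overline{\mu}$, as for a lazy measure, or $G$ having no open subgroup of index two). The damage to the paper is contained: the equality clause is invoked only in Proposition \ref{P:minharmonic}, whose conclusion survives without it, since if $\varphi$ minimizes the energy then Lemmas \ref{L:dirzero} and \ref{L:dirderiv} give $0=D_{v}E_{\varphi}^{\mu}=-4\langle\overline{\mu}(\varphi),v\rangle$ for every $v$, hence $\overline{\mu}(\varphi)=0$ and $\varphi$ is harmonic by Lemma \ref{L:hare}.
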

\begin{proof}
Note first that $\pi(g^{-1})\beta(g) = - \beta(g^{-1})$.  For $v \in \mathcal{H}$ and $g \in G$ (treating $\mathcal{H}$ as real; if it is complex the obvious modifications are required),
\begin{align*}
\| \beta(g) + &\pi(g)v - v \|^{2} - \| \beta(g) \|^{2} = \| \pi(g)v - v \|^{2} + 2 \langle \beta(g), \pi(g)v - v \rangle \\
&= \| \pi(g)v \|^{2} + \| v \|^{2} - 2 \langle \pi(g)v, v \rangle + 2 \langle \pi(g^{-1})\beta(g), v \rangle - 2 \langle \beta(g), v \rangle \\
&= 2 \| v \|^{2} - 2 \langle \pi(g)v, v \rangle - 2 \langle \beta(g^{-1}) + \beta(g), v \rangle
\end{align*}
and therefore
\[
E_{\beta}^{\mu}(v) - E_{\beta}^{\mu}
= 2 \| v \|^{2} - 2 \langle \pi(\mu)v, v \rangle - 2 \langle \mu(\beta) + \check{\mu}(\beta), v \rangle.
\]
Now observe that
\[
\| \pi(\mu)v \| \leq \int \| \pi(g)v \|~d\mu(g) = \| v \|
\]
and therefore
\[
E_{\beta}^{\mu}(v) - E_{\beta}^{\mu} \leq 4 \| v \|^{2} - 4 \big{\langle} \frac{\mu(\beta) + \check{\mu}(\beta)}{2}, v \big{\rangle}.
\]
Then taking $v = \frac{1}{2}(\mu(\beta) + \check{\mu}(\beta)) = \overline{\mu}(\beta)$ we obtain that
\[
E_{\beta}^{\mu}(v) - E_{\beta}^{\mu} \leq 0.
\]
For the inequalities above to all be equalities requires that $\pi(g) \overline{\mu}(\beta) = \pi(h) \overline{\mu}(\beta)$ for all $g,h$ in the support of $\mu$.  Hence in this case $\overline{\mu}(\beta)$ is a fixed point for the linear action.
\end{proof}

\subsection{Harmonic Cocycles Minimize Energy}

\begin{proposition}\label{P:minharmonic}
Let $\pi$ be a unitary representation of a group $G$ on a Hilbert space $\mathcal{H}$ with no nonzero fixed points and $\varphi : G \to \mathcal{H}$ a cocycle and let $\mu \in P(G)$ be symmetric.
Then $\varphi$ is $\mu$-harmonic if and only if it minimizes the $\mu$-energy.
\end{proposition}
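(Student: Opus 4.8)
The plan is to establish the equivalence by showing that both conditions are characterized by the vanishing of all directional derivatives of the energy function, invoking the machinery already developed. Since $\mu$ is symmetric we have $\check{\mu} = \mu$, hence $\mu + \check{\mu} = 2\mu$ and $\overline{\mu} = \mu$; this simplification will be used throughout. Recall from Lemma \ref{L:dirzero} that $\varphi$ minimizes the energy if and only if $D_{w}E_{\varphi}^{\mu} = 0$ for every $w \in \mathcal{H}$. So it suffices to show that $\varphi$ is $\mu$-harmonic if and only if all directional derivatives vanish.

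For this I would compute $D_{w}E_{\varphi}^{\mu}$ explicitly. By Lemma \ref{L:dirderiv},
\[
D_{w}E_{\varphi}^{\mu} = -2 \int_{G} \langle \varphi(g), w \rangle \, d(\mu + \check{\mu})(g) = -4 \int_{G} \langle \varphi(g), w \rangle \, d\mu(g) = -4 \langle \mu(\varphi), w \rangle,
\]
where the middle equality uses symmetry of $\mu$ and the last is the definition of the $\mu$-center $\mu(\varphi) = \int_{G} \varphi(g) \, d\mu(g)$. Thus $D_{w}E_{\varphi}^{\mu} = 0$ for all $w$ if and only if $\langle \mu(\varphi), w \rangle = 0$ for all $w \in \mathcal{H}$, which holds precisely when $\mu(\varphi) = 0$.

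Finally I would connect the two endpoints. By Lemma \ref{L:hare} (or equivalently the remark that a cocycle is $\mu$-harmonic exactly when its $\mu$-center vanishes), $\varphi$ is $\mu$-harmonic if and only if $\mu(\varphi) = 0$. Chaining the equivalences gives: $\varphi$ is $\mu$-harmonic $\iff \mu(\varphi) = 0 \iff D_{w}E_{\varphi}^{\mu} = 0$ for all $w \iff \varphi$ minimizes the energy. I do not anticipate a genuine obstacle here, since the hard analytic work (finiteness of energy, the derivative formula, the characterization of minimizers via directional derivatives) has already been done in the preceding lemmas; the one point requiring care is the role of the hypothesis that $\pi$ has no nonzero fixed points. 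This hypothesis is not needed for the chain of equivalences above, but it guarantees (via Lemma \ref{L:harmonicdecrease}) that the $\mu$-center $\mu(\varphi)$ is the genuine energy-reducing direction rather than a spurious fixed vector, so I would remark on its consistency with Lemma \ref{L:harmonicdecrease} to keep the exposition coherent.
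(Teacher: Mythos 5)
Your proof is correct, but it takes a genuinely different route from the paper's on one of the two implications. You establish both directions through a single chain: $\varphi$ is harmonic $\iff \mu(\varphi)=0$ (Lemma \ref{L:hare}) $\iff D_{w}E_{\varphi}^{\mu}=0$ for all $w$ (Lemma \ref{L:dirderiv} plus symmetry of $\mu$) $\iff \varphi$ minimizes the energy (Lemma \ref{L:dirzero}). The paper proves ``harmonic $\Rightarrow$ minimizes'' exactly as you do, but proves ``minimizes $\Rightarrow$ harmonic'' differently: it evaluates the energy at $v=\mu(\varphi)$ and invokes Lemma \ref{L:harmonicdecrease}, which gives $E_{\varphi}^{\mu}(\overline{\mu}(\varphi)) \leq E_{\varphi}^{\mu}$ with equality if and only if $\overline{\mu}(\varphi)$ is a fixed vector; minimality forces equality, and the no-nonzero-fixed-points hypothesis then converts ``fixed vector'' into $\mu(\varphi)=0$. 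That is precisely where the hypothesis enters the paper's argument, and your observation that it is dispensable is right: for symmetric $\mu$, minimality gives vanishing directional derivatives (Lemma \ref{L:dirzero}), hence $\langle \mu(\varphi), w\rangle = 0$ for all $w$, hence $\mu(\varphi)=0$, with no assumption on fixed vectors — consistent with the fact that for the trivial representation every integrable cocycle is both harmonic (by symmetry of $\mu$) and trivially minimizing. What the paper's route buys is an explicit exhibition of the $\mu$-center as a concrete energy-decreasing direction, the gradient-descent step underlying the Mok--Kleiner energy-decrement method (and Lemma \ref{L:harmonicdecrease} also describes the equality case when fixed vectors do exist); what your route buys is economy — only the derivative lemmas are needed — and a slightly stronger statement of the proposition. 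One caveat shared by both proofs: the identity $\int_{G}\langle\varphi(g),w\rangle\,d\mu(g) = \langle \mu(\varphi),w\rangle$ presupposes that the center exists as a Bochner integral, which follows by Cauchy--Schwarz from the finite-energy assumption the paper makes implicitly throughout this section.
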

\begin{proof}
Since $\mu$ is symmetric, $\overline{\mu} = \check{\mu} = \mu$.
Assume $\varphi$ minimizes the energy.  Then $E_{\varphi}^{\mu} \leq E_{\varphi}^{\mu}(v)$ for all $v \in \mathcal{H}$.  By Lemma \ref{L:harmonicdecrease} we have that $E_{\varphi}^{\mu}(\mu(\beta)) \leq E_{\varphi}^{\mu}$ and equality occurs if and only if $\varphi$ is harmonic.  Hence minimizing the energy implies $\varphi$ is harmonic.

Now assume that $\varphi$ is harmonic.  Since $\mu = \check{\mu}$ ($\mu$ is symmetric), by Lemma \ref{L:dirzero}, harmonicity implies $D_{v}E_{\varphi}^{\mu} = -4 \langle \mu(\beta), v \rangle = 0$ for all $v \in \mathcal{H}$.  Now by Lemma \ref{L:dirderiv}, for any $v$,
\[
E_{\varphi}(v) - E_{\varphi} = \int_{G} \| \pi(g)v - v \|^{2} + D_{v}E_{\varphi} = \int_{G} \| \pi(g)v - v \|^{2} d\mu(g) \geq 0
\]
and therefore $\varphi$ minimizes the energy.
\end{proof}

\subsection{Proof of Theorem \ref{T:harmoniccocycles2} When There Are No Fixed Points}

\begin{proposition}\label{P:uniquemin}
Let $\pi$ be a unitary representation of a group $G$ on a Hilbert space $\mathcal{H}$ and $\mu \in P(G)$ an admissible symmetric probability measure on $G$ (recall that admissible means the support generates the group and some convolution power is nonsingular with respect to Haar measure).

Let $[\beta]$ be a reduced cohomology class containing an $L^{2}(\mu)$ representative.  There exists a unique cocycle $\varphi \in [\beta]$ such that $\varphi$ minimizes the $\mu$-energy.
\end{proposition}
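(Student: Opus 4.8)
The plan is to recognize the $\mu$-energy as a squared Hilbert-space norm and solve the problem as a nearest-point projection. The hypothesis that $[\beta]$ has an $L^{2}(\mu)$ representative means that, viewing a finite-energy cocycle $\varphi$ as an element of the Hilbert space $L^{2}(G,\mu;\mathcal{H})$, we have $E_{\varphi}^{\mu}=\|\varphi\|^{2}_{L^{2}(\mu)}$. A coboundary $b_{w}(g)=\pi(g)w-w$ is the image of $w$ under the bounded coboundary map into $L^{2}(\mu)$; writing $B\subseteq L^{2}(\mu)$ for its range and $\overline{B}$ for the $L^{2}(\mu)$-closure, every cocycle cohomologous to $\beta$ lies in $\beta+B$. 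Since $E_{\beta}^{\mu}$ is continuous on $L^{2}(\mu)$ (Lemma~\ref{L:energycont}) and the reduced class $[\beta]$ is the uniform-on-compacta closure of the cohomologous cocycles, minimizing the energy over $[\beta]$ amounts to finding the least-norm element of the closed affine subspace $\beta+\overline{B}$, equivalently to minimizing $E_{\beta}^{\mu}(v)$ over $v\in\mathcal{H}$.

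First I would produce this least-norm element. Either by the Hilbert-space projection theorem applied to $\beta+\overline{B}$, or directly: take a minimizing sequence $v_{n}$ with $E_{\beta}^{\mu}(v_{n})\to I:=\inf_{w}E_{\beta}^{\mu}(w)$. Because $E_{\beta}^{\mu}$ is a convex quadratic functional, the parallelogram identity reads
\[
E_{\beta}^{\mu}(v_{m})+E_{\beta}^{\mu}(v_{n})-2E_{\beta}^{\mu}\!\big(\tfrac{v_{m}+v_{n}}{2}\big)=\tfrac{1}{2}\int_{G}\|\pi(g)(v_{m}-v_{n})-(v_{m}-v_{n})\|^{2}\,d\mu(g).
\]
Since the midpoint value is at least $I$, the left side is bounded by $E_{\beta}^{\mu}(v_{m})+E_{\beta}^{\mu}(v_{n})-2I\to 0$, so the coboundaries $b_{v_{n}}$ are Cauchy in $L^{2}(\mu)$ and $\beta_{v_{n}}=\beta+b_{v_{n}}$ converges in $L^{2}(\mu)$ to some $\psi$ with $\|\psi\|^{2}_{L^{2}(\mu)}=I$. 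Uniqueness of the Hilbert projection makes $\psi$ unique as an $L^{2}(\mu)$-class; note the fixed subspace of $\pi$ is exactly the kernel of the coboundary map, so fixed vectors play no role.

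The main obstacle is the regularity upgrade: a priori $\psi$ is merely an $L^{2}(\mu)$-function, and I must show it agrees $\mu$-a.e.\ with a genuine continuous cocycle $\varphi$ which moreover lies in $[\beta]$, i.e.\ $\beta_{v_{n}}\to\varphi$ uniformly on compacta rather than only in $L^{2}(\mu)$. This is precisely where admissibility enters. Minimality forces $\psi\perp\overline{B}$, which by Lemma~\ref{L:dirderiv} together with symmetry of $\mu$ is the statement that $\psi$ is $\mu$-centered (Lemma~\ref{L:hare}); combined with the cocycle relation this yields the mean-value identity $\varphi(g)=\int_{G}\varphi(gh)\,d\mu^{*k}(h)$. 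Since some convolution power $\mu^{*k}$ is nonsingular with respect to Haar measure, averaging against its absolutely continuous part is a smoothing operation: it promotes the $L^{2}(\mu)$-limit to a continuous cocycle and converts the $L^{2}$-convergence of the $\beta_{v_{n}}$ into locally uniform convergence, placing $\varphi$ in the reduced class $[\beta]$.

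Finally, uniqueness at the level of cocycles follows from the $L^{2}(\mu)$-uniqueness together with propagation of equality. If $\varphi_{1},\varphi_{2}\in[\beta]$ both minimize the energy, then their midpoint is again a cocycle in $[\beta]$, and the parallelogram identity gives $\|\tfrac{1}{2}(\varphi_{1}-\varphi_{2})\|^{2}_{L^{2}(\mu)}=I-E_{\beta}^{\mu}(\text{midpoint})\le 0$, so $\varphi_{1}=\varphi_{2}$ in $L^{2}(\mu)$, hence $\mu$-a.e.\ and, by continuity, on all of $\operatorname{supp}(\mu)$. As $\mu$ is symmetric its support is symmetric and generates $G$, and the cocycle identity $\varphi(s_{1}\cdots s_{k})=\sum_{j}\pi(s_{1}\cdots s_{j-1})\varphi(s_{j})$ determines a continuous cocycle from its values on a generating set; therefore $\varphi_{1}=\varphi_{2}$ everywhere.
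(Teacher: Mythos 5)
Your skeleton---realize the energy as the squared norm in $L^{2}(G,\mu;\mathcal{H})$, run a minimizing sequence, and invoke the parallelogram law---is the same as the paper's, and your replacement of the paper's directional-derivative machinery (Lemmas \ref{L:dirderiv}, \ref{L:Dcw}, \ref{L:derivenergycont}) by the orthogonality characterization of the Hilbert projection (once $\psi\perp\overline{B}$, one gets $E_{\psi}^{\mu}(v)=\|\psi\|_{L^{2}(\mu)}^{2}+\|b_{v}\|_{L^{2}(\mu)}^{2}\geq E_{\psi}^{\mu}$ for free) would be a genuine simplification of that step. But there is a real gap exactly where you yourself locate ``the main obstacle'': the regularity upgrade is asserted, not proved. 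The claim that averaging against the absolutely continuous part of $\mu^{*k}$ ``promotes the $L^{2}(\mu)$-limit to a continuous cocycle and converts the $L^{2}$-convergence into locally uniform convergence'' is precisely the content that needs an argument, and none is given; no lemma in the paper supplies it. Moreover the sketch you give is circular: you apply Lemma \ref{L:dirderiv} and Lemma \ref{L:hare} to conclude that $\psi$ is $\mu$-centered and satisfies a mean-value identity, but both of those lemmas are derived from the cocycle identity, which $\psi$---a priori only a norm-limit of cocycles in $L^{2}(G,\mu;\mathcal{H})$---is not yet known to satisfy. Until $\psi$ is known to agree a.e.\ with a cocycle, orthogonality to $\overline{B}$ does not translate into $\int_{G}\psi\,d\mu=0$, so your smoothing step has no harmonicity to start from.

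A second, related, unproven identification: the reduced class $[\beta]$ is the closure of $\beta+B^{1}$ in the topology of uniform convergence on compacta, whereas your projection lives in the $L^{2}(\mu)$-closure $\beta+\overline{B}$; neither set obviously sits inside the other (elements of the $L^{2}$-closure need not be cocycles at all, and a cocycle in $[\beta]$ lies in the $L^{2}$-closure only after a domination argument, which in general needs a moment hypothesis this proposition does not assume). Your final uniqueness paragraph quietly uses this identification twice, when it assigns both minimizers the common value $I$ and when it bounds the midpoint's energy below by $I$. The paper's proof is structured to avoid both issues: it works throughout with honest cocycles $\beta_{n}=\varphi+b_{v_{n}}$, so the $\mu$-a.e.\ pointwise limit satisfies the cocycle identity wherever defined, and minimality of the limit is established not by identifying it with an abstract projection but by continuity of the directional derivatives (Lemma \ref{L:derivenergycont}), which only ever evaluates energies of the genuine cocycles $\beta_{n}$. (The paper is itself brisk about why the a.e.-defined limit is a cocycle in the class---its ``evidently'' hides the argument that the set where $\pi(g)v_{n}-v_{n}$ converges is a subgroup of full $\mu$-measure, hence, by nonsingularity of a convolution power and Steinhaus, open and therefore all of $G$---but its route never applies cocycle identities to an object that is not one.) To repair your proof you would need to carry out that subgroup/Steinhaus argument and then establish continuity of the limit and local uniformity of the convergence; as written, the proposal replaces the hardest step with a declaration that admissibility handles it.
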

\begin{proof}
First we establish uniqueness.  Suppose $\varphi$ and $\phi$ are both cocycles in the same reduced cohomology class so that both minimize the energy.  Since they are in the same reduced cohomology class there exists a sequence $v_{n}$ such that $\phi(g) = \lim_{n} \varphi(g) + \pi(g)v_{n} - v_{n}$ (strongly) uniformly over compact sets in $G$.

Since $\varphi$ minimizes the energy we have $D_{v}E_{\varphi}^{\mu} = 0$ for all $v$ by Lemma \ref{L:dirzero}.  By Lemma \ref{L:actualprop}, for any $v$,
\[
E_{\varphi}^{\mu}(v) - E_{\varphi}^{\mu} = \int_{G} \| \pi(g)v - v \|^{2}~d\mu(g) + D_{v}E_{\varphi}^{\mu} = \int_{G} \| \pi(g)v - v \|^{2}~d\mu(g)
\]
By the continuity of energy (Lemma \ref{L:energycont}) we have that
\[
0 = E_{\phi}^{\mu} - E_{\varphi}^{\mu} = \lim_{n} E_{\varphi}^{\mu}(v_{n}) - E_{\varphi}^{\mu} = \lim_{n} \int_{G} \| \pi(g) v_{n} - v_{n} \|^{2}~d\mu(g)
\]
and therefore, since $\phi - \varphi$ is a cocycle and $\phi(g) - \varphi(g) = \lim_{n} \pi(g)v_{n} - v_{n}$ (uniformly over compact sets in $G$, strongly in $\mathcal{H}$), again by the continuity of energy (Lemma \ref{L:energycont})
\[
E_{\phi - \varphi}^{\mu} = \lim_{n} E_{0}^{\mu}(v_{n}) = 0
\]
and so
\[
\int_{G} \| \phi(g) - \varphi(g) \|^{2}~d\mu(g) = 0
\]
meaning that $\phi(g) = \varphi(g)$ for $\mu$-almost every $g$ which by continuity (and that the support of $\mu$ generates $G$) gives $\phi = \varphi$.

We now show that a minimizer in fact exists.  Fix a cocycle $\varphi$ and take a sequence of vectors $v_{n} \in \mathcal{H}$ such that
\[
E_{\varphi}^{\mu}(v_{n}) \downarrow \inf_{v \in \mathcal{H}} E_{\varphi}^{\mu}(v)
\]
Define
\[
\beta_{n}(g) = \varphi(g) + \pi(g)v_{n} - v_{n}
\]
so that the $\beta_{n}$ are cocycles cohomologous to $\varphi$.  By the Parallelogram Law (that $\| a + b \|^{2} + \| a - b \|^{2} = 2 \| a \|^{2} + 2 \| b \|^{2}$),
\begin{align*}
\frac{1}{2} \int_{G} \| &\beta_{n}(g) - \beta_{m}(g) \|^{2}~d\mu(g)\\
&= \frac{1}{2} \int_{G} 2 \| \beta_{n}(g) \|^{2} + 2 \| \beta_{m}(g) \|^{2} - \| \beta_{n}(g) + \beta_{m}(g) \|^{2}~d\mu(g) \\
&= E_{\varphi}^{\mu}(v_{n}) + E_{\varphi}^{\mu}(v_{m}) - 2 \int_{G} \| \frac{1}{2}\big{(}\beta_{n}(g) + \beta_{m}(g)\big{)}\|^{2}~d\mu(g) \\
&= E_{\varphi}^{\mu}(v_{n}) + E_{\varphi}^{\mu}(v_{m}) - 2 E_{\varphi}^{\mu}(\frac{v_{n}+v_{m}}{2}) \\
&\leq E_{\varphi}^{\mu}(v_{n}) + E_{\varphi}^{\mu}(v_{m}) - 2 \inf_{v\in\mathcal{H}} E_{\varphi}^{\mu}(v)
\end{align*}
and therefore
\[
\lim_{n,m \to \infty} \int_{G} \| \beta_{n}(g) - \beta_{m}(g) \|^{2}~d\mu(g) = 0
\]
meaning that for almost every $g$ we have that (again by Lemma \ref{L:energycont} we have continuity)
\[
\| \beta_{n}(g) - \beta_{m}(g) \| \to 0 \quad\quad \text{as $n,m \to \infty$}.
\]
Since this a Cauchy sequence of vectors (for each $g$) it has a strong limit point, call it $\beta(g)$:
\[
\beta(g) = \lim_{n \to \infty} \beta_{n}(g) = \lim_{n\to\infty} \varphi(g) + \pi(g)v_{n} - v_{n}.
\]
So $\beta$ is evidently a cocycle almost cohomologous to $\varphi$.
We remark at this point that were the infimum of the energy $\inf_{v\in\mathcal{H}} E_{\varphi}^{\mu}(v)$ attained by some vector $v$ then $\beta$ will simply be $\varphi(g) + \pi(g)v - v$.

The proof will be complete if we show that $\beta$ minimizes the energy, so it is enough (by Lemma \ref{L:dirzero}) to show that $DE_{\beta}^{\mu} = 0$.  Suppose that $D_{w}E_{\beta}^{\mu} \ne 0$ for some $w$.  Then, by the continuity of the derivative of the energy function (Lemma \ref{L:derivenergycont}),
\[
\lim_{n} D_{w}E_{\beta_{n}}^{\mu} \ne 0
\]
so there exists $\delta > 0$ and a subsequence of $n$ so that
\[
D_{w}E_{\beta_{n}}^{\mu} \leq - \delta
\]
(we may replace $w$ by $-w$ to force the derivative to be negative using Lemma \ref{L:Dcw}).

By Lemma \ref{L:Dcw}, for any $c \geq 0$,
\[
D_{cw}E_{\beta_{n}}^{\mu} \leq -c \delta
\]
and then by Lemma \ref{L:actualprop}
\[
E_{\beta_{n}}^{\mu}(cw) - E_{\beta_{n}}^{\mu} = \int_{G} \| \pi(g)cw - cw \|^{2}~d\mu(g) + D_{cw}E_{\beta_{n}}^{\mu} \leq 4c^{2}\|w\|^{2} - c\delta.
\]
Now $E_{\beta_{n}}^{\mu} = E_{\varphi}^{\mu}(v_{n}) \downarrow \inf_{v \in \mathcal{H}} E_{\varphi}^{\mu}(v)$ so for any $\epsilon > 0$ and large $n$,
\[
E_{\beta_{n}}^{\mu}(cw) - E_{\beta_{n}}^{\mu} \geq \inf_{v \in \mathcal{H}} E_{\varphi}^{\mu}(v) - \big{(} \inf_{v \in \mathcal{H}} E_{\varphi}^{\mu}(v) + \epsilon \big{)} = - \epsilon
\]
and therefore for any $\epsilon > 0$, by taking $c = \frac{\delta}{8 \| w \|^{2}}$, we have
\[
- \epsilon \leq c(4c \|w\|^{2} - \delta) = c \Big{(} \frac{\delta}{2} - \delta \Big{)} = - \frac{c\delta}{2} = - \frac{\delta^{2}}{16 \| w \|^{2}}.
\]
Hence $\delta^{2} \leq 16 \| w \|^{2} \epsilon$.  Taking $\epsilon \to 0$ then contradicts that $\delta > 0$ and so we conclude that in fact  $D_{v}E_{\beta}^{\mu} = 0$.
\end{proof}

\subsection{Completing the Proof -- Fixed Points}

\begin{proof}[Proof of Theorem \ref{T:harmoniccocycles2}]
We handle the case of $\mathcal{H}$ having fixed points under $\pi(G)$.  Decompose $\mathcal{H} = \mathcal{H}_{fixed} \oplus \mathcal{H}_{unfixed}$ where $\mathcal{H}_{fixed}$ is the set of fixed points for $\pi$.  These are clearly $G$-invariant subspaces of $\mathcal{H}$.

Let $\beta : G \to \mathcal{H}$ be a cocycle.  Write $\beta(g) = \beta_{fixed}(g) + \beta_{unfixed}(g)$ where $\beta_{fixed}(g)$ is the projection of $\beta(g)$ to $\mathcal{H}_{fixed}$ (and likewise for $\beta_{unfixed}$).  By the cocycle identity,
\begin{align*}
\beta_{fixed}(gh) &+ \beta_{unfixed}(gh) = \pi(g)\beta_{fixed}(h) + \pi(g)\beta_{unfixed}(h) + \beta_{fixed}(g) + \beta_{unfixed}(g)
\end{align*}
and therefore, since $\pi(g)\beta_{fixed}(h) = \beta_{fixed}(h)$,
\[
\beta_{fixed}(gh) - \beta_{fixed}(g) - \beta_{fixed}(h) = - \beta_{unfixed}(gh) + \pi(g)\beta_{unfixed}(h) + \beta_{unfixed}(g)
\]
The vector on the left is in $\mathcal{H}_{fixed}$ while that on the right is in $\mathcal{H}_{unfixed}$.  Therefore both are $0$.  So $\beta_{fixed} : G \to \mathcal{H}_{fixed}$ and $\beta_{unfixed} : G \to \mathcal{H}_{unfixed}$ are both cocycles.

Now $\beta_{fixed}(g^{-1}) = - \beta_{fixed}(g)$ by the cocycle identity and so
\[
\int_{G} \beta_{fixed}(g) d\mu(g) = 0
\]
since $\mu$ is symmetric.  Since $\pi$ is the trivial representation on $\mathcal{H}_{fixed}$ and there are no nontrivial coboundaries for the trivial representation, $\beta_{fixed}$ is the sole representative of its (reduced) cohomology class and is also $\mu$-harmonic.

The conclusion of Theorem \ref{T:harmoniccocycles2} holds for $\beta_{unfixed}$ as there are no fixed points (Proposition \ref{P:uniquemin}).  So there is a unique $\mu$-harmonic representative for $[\beta_{unfixed}]$.  Call this representative $\varphi_{unfixed}$.  Define the cocycle on $\mathcal{H}$
\[
\varphi := \beta_{fixed} + \varphi_{unfixed}
\]
Since $\beta_{fixed}$ and $\varphi_{unfixed}$ are $\mu$-harmonic, so is $\varphi$.  Since $\varphi_{unfixed} \in [\beta_{unfixed}]$ we clearly have $\varphi \in [\beta]$.  So $[\beta]$ has a $\mu$-harmonic representative.

Suppose $\phi$ were also a $\mu$-harmonic representative of $[\beta]$.  Decompose $\phi = \phi_{fixed} + \phi_{unfixed}$.  Observe that
\[
\phi(g) - \beta(g) = \lim \pi(g)v_{n} - v_{n}
\]
for some $v_{n}$ which we may take to be in $\mathcal{H}_{unfixed}$ (if $v_{n} = v_{n,fixed} + v_{n,unfixed}$ then $\pi(g)v_{n} - v_{n} = \pi(g)v_{n,unfixed} - v_{n,unfixed}$).  Then
\[
\phi_{fixed}(g) - \beta_{fixed}(g) = \beta_{unfixed}(g) - \phi_{unfixed}(g) + \lim \pi(g)v_{n} - v_{n}
\]
and the term on the left is in $\mathcal{H}_{fixed}$ while that on the right is in $\mathcal{H}_{unfixed}$.  So they are both $0$.  Then, $\phi_{fixed} = \beta_{fixed}$ and $\phi_{unfixed} \in [\beta_{unfixed}]$.  So, by the uniqueness of $\varphi_{unfixed}$ in $[\beta_{unfixed}]$ we have that $\phi = \beta_{fixed} + \varphi_{unfixed} = \varphi$.  So $\varphi$ is unique.
\end{proof}

\section{Characterizing Property $(T)$ and Compactness}
 
We now prove the main results:
\begin{proof}[Proof of Theorem \ref{T:propT}]
Suppose $G$ does not have property $(T)$.  Then by (the contrapositive of) Theorem \ref{T:Tharmonic} there exists $\pi : G \to \mathcal{U}(\mathcal{H})$ a unitary representation of $G$ on a Hilbert space and a nontrivial $\mu$-harmonic cocycle $\beta : G \to \mathcal{H}$.  For $v \in \mathcal{H}$ define the function
\[
\varphi_{v} : G \to \mathbb{R} \quad\quad\quad\quad \varphi_{v}(g) = \langle \beta(g), v \rangle
\]
and observe that
$|\varphi_{v}(g)| \leq \| \beta(g) \| \| v \|$.

Since $\beta$ is a cocycle,
\[
\| \beta(gh) \| = \| \pi(g)\beta(h) + \beta(g) \| \leq \| \pi(g)\beta(h) \| + \| \beta(g) \| = \| \beta(h) \| + \| \beta(g) \|.
\]
Let $K$ be a compact generating set for $G$ and set $L = \max \{ \| \beta(k) \| : k \in K \}$ which exists as $\beta$ is continuous and $K$ is compact.  Then, writing $|g|$ for the word length of $g$ relative to $K$, we have $\| \beta(g) \| \leq L |g|$ so
\[
\| \beta(g) - \beta(h) \| = \| \beta(hh^{-1}g) - \beta(h) \| = \| \pi(h)\beta(h^{-1}g) \| = \| \beta(h^{-1}g) \| \leq L |h^{-1}g|
\]
meaning $\varphi_{v}$ is Lipschitz.  Using the harmonicity of $\beta$,
\[
\int_{G} \varphi_{v}(gh)~d\mu(h) = \Big{\langle} \int_{G} \beta(gh)~d\mu(h), v \Big{\rangle} = \big{\langle} \beta(g), v \big{\rangle} = \varphi_{v}(g)
\]
so we conclude that $\varphi_{v}$ is $\mu$-harmonic.

By hypothesis this means that $\varphi_{v}$ is constant for every $v \in \mathcal{H}$.  As $\beta(e) = 0$ since it is a cocycle,
\[
\varphi_{v}(g) = \varphi_{v}(e) = 0
\]
for all $g \in G$ and all $v \in \mathcal{H}$.  But this in turn implies $\beta = 0$ contradicting that it is nontrivial.  Hence $G$ must in fact have property $(T)$.
\end{proof}

\begin{proof} [Proof of Corollary \ref{T:compactgroups}]
Lipschitz functions are always continuous since if $g_{n} \to g$ and $f$ is Lipschitz then
\[
|f(g_n) - f(g_m)| \leq C \| g_{m}^{-1}g_{n} \| \to 0\quad\quad\quad\quad\text{as $g_{m}^{-1}g_{n} \to e$}
\]

Assume $G$ is compact and let $\mu$ be any admissible measure on $G$.  Let $f$ be any Lipschitz $\mu$-harmonic function.  As $f$ is continuous, by the maximum principle $f$ attains its maximum so let $g_{0}$ such that
\[
f(g) \leq f(g_{0}) \quad\quad\text{for all $g \in G$}
\]
This implies that, using the harmonicity of $f$,
\[
f(g_{0}) = \int_{G} f(g_{0}h)~d\mu(h) \leq \int_{G} f(g_{0})~d\mu(h) = f(g_{0})
\]
which, by the convexity of the integral, means $f(g_{0}h) = f(g_{0})$ for almost every $h$, hence $f$ is constant $\mu$-almost everywhere.  As $f$ is continuous and the support of $\mu$ generates $G$ (and $\mu$ is nonsingular with respect to Haar measure), $f$ is constant.  So the first condition implies the third.

The third obviously implies the second.  Now assume that there is some admissible $\mu$ with finite second moment such that every $\mu$-harmonic Lipschitz function and every $\mu$-harmonic bounded function is constant.  As every $\mu$-harmonic bounded function is constant, $G$ is amenable by Furstenberg \cite{furst} and Kaimanovich-Vershik \cite{KV}.  Theorem \ref{T:propT} gives that $G$ has property $(T)$.  As any group with both property $(T)$ and amenability is compact, this shows that the second condition implies the first and completes the proof.
\end{proof}

\section{Reduced Cohomology of Products of Groups}

Our aim here is to use the theory of harmonic cocycles to give a new proof of a result of Shalom:
\begin{theorem}[Shalom \cite{Sha06}]\label{T:sha}
Let $G = G_{1} \times \cdots \times G_{k}$ be a product of at least two locally compact second countable, compactly generated groups and let $\pi$ be a unitary representation of $G$.

Every $(G,\pi)$-cocycle is almost cohomologous to a $(G,\pi)$-cocycle $\beta$ such that
\[
\beta = \beta_{0} + \beta_{1} + \cdots + \beta_{k}
\]
where $\beta_{0}$ takes values in the fixed points for $G$ and each $\beta_{j}$ is a $(G_{j},\pi\big{|}_{G_{j}})$-cocycle (treated as $(\prod_{j=1}^{k} G_{j},\pi)$-cocycle taking values in the $\check{G}_{j}$-invariant vectors where $\check{G}_{j} = \prod_{i\ne j} G_{i}$).
\end{theorem}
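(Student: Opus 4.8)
The plan is to pass to a harmonic representative and then exploit the commuting factors. Fix reasonable measures $\mu_j$ on each $G_j$ and let $\mu = \mu_1 \times \cdots \times \mu_k$, which is again reasonable on $G$; by Theorem~\ref{T:harmoniccocycles} it suffices to prove that the unique $\mu$-harmonic representative $\beta$ of a given class already decomposes as claimed. Grouping the last $k-1$ factors lets me induct on $k$, so I reduce to the two-factor case $G = G_1 \times G_2$ with $\check{G}_1 = G_2$. From the cocycle identity together with $g = g_1 g_2 = g_2 g_1$ (the factors commute) I record two elementary identities for $\beta_i := \beta|_{G_i}$: the telescoping $\beta(g_1 g_2) = \beta_1(g_1) + \pi(g_1)\beta_2(g_2)$, and the commutation relation $(\pi(y) - I)\beta_i(x) = (\pi(x) - I)\beta_j(y)$ for $x \in G_i$, $y \in G_j$. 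The second says precisely that for fixed $y \in G_j$ the $G_i$-cocycle $x \mapsto \pi(y)\beta_i(x)$ differs from $\beta_i$ by the $G_i$-coboundary with vector $\beta_j(y)$.

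Next I decompose the representation. The orthogonal projections $P_1, P_2$ onto the $G_1$- and $G_2$-invariant vectors lie in the commutants of $\pi(G_2)$ and $\pi(G_1)$ respectively, hence are $G$-equivariant and commute with each other, giving a $\pi$-invariant splitting $\mathcal{H} = \mathcal{H}^G \oplus \mathcal{K}_1 \oplus \mathcal{K}_2 \oplus \mathcal{K}_{12}$, where $\mathcal{K}_1$ is $G_1$-fixed but $G_2$-ergodic, $\mathcal{K}_2$ is $G_2$-fixed but $G_1$-ergodic, and $\mathcal{K}_{12}$ has no nonzero invariant vector for either factor. Projecting $\beta$ onto these ($\pi$-invariant) summands yields cocycles, which I treat separately. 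On $\mathcal{H}^G$ the cocycle is a continuous homomorphism into the fixed vectors; this contributes $\beta_0$. On $\mathcal{K}_1$ the commutation relation forces the $G_1$-restriction to vanish (its values would be $G_2$-invariant vectors in a $G_2$-ergodic space), so the $\mathcal{K}_1$-part of $\beta$ is a $(G_2, \pi|_{G_2})$-cocycle valued in $G_1$-invariant vectors, i.e. a piece of $\beta_2$; symmetrically $\mathcal{K}_2$ contributes to $\beta_1$. The entire theorem therefore reduces to showing that the $\mathcal{K}_{12}$-component of $\beta$ is zero.

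The heart of the argument is this vanishing on the totally ergodic part $\mathcal{K}_{12}$, and it is where harmonicity enters decisively. Writing $w = \mu_1(\beta_1)$ and $u = \mu_2(\beta_2)$ for the factor-centers of the restrictions, integrating the two telescopings of $\beta$ against $\mu$ and using $\mu(\beta) = 0$ (Lemma~\ref{L:hare}) gives $w + \pi(\mu_1)u = 0$ and $u + \pi(\mu_2)w = 0$, whence $w = \pi(\mu_1)\pi(\mu_2)w = \pi(\mu)w$. Because $\mu$ is admissible its support generates $G$, so the equality case in $\| \pi(\mu)w \| \le \| w \|$ forces $w$ to be $G$-invariant; on $\mathcal{K}_{12}$ this means $w = 0$, and likewise $u = 0$. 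Thus on $\mathcal{K}_{12}$ the restrictions $\beta_1, \beta_2$ are themselves harmonic. Since $\mathcal{K}_{12}$ has no $G_1$-fixed vectors, $\beta_1$ is the unique $\mu_1$-energy minimizer in its class (Propositions~\ref{P:minharmonic} and~\ref{P:uniquemin}); but $x \mapsto \pi(y)\beta_1(x)$ is cohomologous to $\beta_1$ (coboundary vector $\beta_2(y)$, by the commutation relation) and has the same energy, as $\pi(y)$ is unitary. Uniqueness of the minimizer then gives $\pi(y)\beta_1 = \beta_1$ for every $y \in G_2$, so $\beta_1$ takes values in the $G_2$-invariant vectors; as there are none in $\mathcal{K}_{12}$, we get $\beta_1 = 0$, and symmetrically $\beta_2 = 0$. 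By the telescoping, $\beta$ vanishes on $\mathcal{K}_{12}$.

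Assembling the pieces gives $\beta = \beta_0 + \beta_1 + \beta_2$ with $\beta_0$ valued in the $G$-fixed vectors and each $\beta_i$ a $(G_i, \pi|_{G_i})$-cocycle valued in the $\check{G}_i$-invariant vectors; the inductive step on the number of factors then yields the general statement. I expect the main obstacle to be precisely the $\mathcal{K}_{12}$ step: a priori the restriction of a globally harmonic cocycle to a factor need not be harmonic, and the argument hinges on the observation that on the totally ergodic part the relation $w = \pi(\mu)w$ together with the absence of invariant vectors forces the factor-centers to vanish, reconciling global harmonicity with the factorwise energy-minimization uniqueness. The remaining bookkeeping---verifying that the product measure is reasonable, that the invariant-vector projections are equivariant and commute, and that the fixed-vector homomorphisms collect correctly into $\beta_0$---is routine.
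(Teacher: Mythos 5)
Your proposal is correct, and it shares the paper's overall strategy---pass to the unique $\mu$-harmonic representative (Theorem \ref{T:harmoniccocycles}), compute the factor-centers, and invoke the uniqueness machinery of Section \ref{S:harmonic}---but two pieces of the execution are genuinely different. You split $\mathcal{H}$ into four invariant summands $\mathcal{H}^{G}\oplus\mathcal{K}_{1}\oplus\mathcal{K}_{2}\oplus\mathcal{K}_{12}$ via the commuting projections onto factor-fixed vectors, whereas the paper splits off only the $G$-fixed part and proves that the restrictions $\beta\big|_{G_{i}}$ of the harmonic representative land in the $\check{G}_{i}$-invariant vectors; your refinement makes explicit a fact the paper leaves implicit (the harmonic representative vanishes outright on the totally ergodic piece $\mathcal{K}_{12}$) and isolates the parts of the argument that are pure algebra: on $\mathcal{K}_{1}$ and $\mathcal{K}_{2}$ the commutation identity $(\pi(y)-I)\beta_{i}(x)=(\pi(x)-I)\beta_{j}(y)$ does all the work and no harmonicity is needed, so harmonicity enters only on $\mathcal{K}_{12}$. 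Second, your key invariance step runs through a different mechanism: the paper integrates $\beta(g_{1}g_{2})$ over $g_{1}$ in two ways to obtain stationarity $\pi(\mu_{1})\beta(g_{2})=\beta(g_{2})$ of each individual vector and upgrades this to $G_{1}$-fixedness, while you observe that the translated cocycle $x\mapsto\pi(y)\beta_{1}(x)$ is cohomologous to $\beta_{1}$ (coboundary vector $\beta_{2}(y)$) with the same energy and conclude $\pi(y)\beta_{1}=\beta_{1}$ from uniqueness of the energy minimizer (Propositions \ref{P:minharmonic} and \ref{P:uniquemin}); both mechanisms rest on the same underlying uniqueness theorem, yours operating on whole cocycles and the paper's on individual vectors. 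Two small remarks: your $\mathcal{K}_{12}$ step can be shortened, since the translate $\pi(y)\beta_{1}$ has $\mu_{1}$-center $\pi(y)w=0$ and is therefore itself harmonic, so uniqueness of harmonic representatives applies directly without the energy detour; and in the induction on $k$ one should note (a verification the paper also omits) that the $\mathcal{K}_{1}$-component, viewed as a cocycle of $G_{2}\times\cdots\times G_{k}$, inherits harmonicity from $\beta$ precisely because its $G_{1}$-restriction vanishes.
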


At the level of reduced cohomology:
\[
\overline{H^{1}}(\prod_{j=1}^{k} G_{j}, \pi) = \overline{H^{1}}(\prod_{j=1}^{k} G_{j}, \pi\big{|}_{fixed}) \oplus \bigoplus_{j=1}^{k} \overline{H^{1}}(G_{j},\pi\big{|}_{G_{j}}) 
\]

The rest of this section will occupy the proof; as the general case follows immediately from the $G_{1} \times G_{2}$ case, we prove it for that case.

\subsection{Cocycles on Product Groups}

For the remainder of this section, let $G_{1}, G_{2}$ be locally compact, second countable, compactly generated groups and set $G = G_{1} \times G_{2}$.  Let $\mu = \mu_{1} \times \mu_{2}$ where $\mu_{i}$ be reasonable probability measures on $G_{i}$.

For a unitary representation $\pi : G_{1} \times G_{2} \to \mathcal{H}$, one may consider the representation restricted to $G_{1}$ (or $G_{2}$) in the obvious manner: $\pi\big{|}_{G_{1}} : G_{1} \to \mathcal{U}(\mathcal{H})$ by $\pi\big{|}_{G_{1}}(g_{1}) = \pi(g_{1},e)$.

For a cocycle $\beta \in Z^{1}(G_{1} \times G_{2},\pi)$, we may likewise consider the restriction of $\beta$ to $G_{1}$ (or $G_{2})$ by
\[
\beta\big{|}_{G_{1}}(g_{1}) = \beta(g_{1},e)
\]
The reader may verify easily that $\beta\big{|}_{G_{1}}$ is indeed a cocycle for $\pi\big{|}_{G_{1}}$.

\subsection{Harmonic Functions and Fixed Points}

\begin{proposition}
Let $\beta$ be a $\mu$-harmonic cocycle for $(G, \pi)$ where $\pi$ is a unitary representation of $G_{1} \times G_{2}$.  Set
\[
v_{i} = \int_{G_{i}} \beta\big{|}_{G_{i}}(g_{i})~d\mu_{i}(g_{i})
\]
for $i=1,2$.  Then $v_{1}$ and $v_{2}$ (which need not be distinct) are fixed points for $(G,\pi)$.
\end{proposition}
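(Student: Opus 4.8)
The plan is to convert the single harmonicity condition into a pair of linear relations between $v_1$ and $v_2$ coming from the product structure, and then to combine these two relations so as to exhibit each $v_i$ as a genuine $\pi(\mu)$-invariant vector, from which full $G$-invariance follows.

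First I would record the two ways of splitting $\beta$ across the factors. Writing a group element as $(g_1,g_2)$ and using $(g_1,g_2) = (g_1,e)(e,g_2) = (e,g_2)(g_1,e)$ together with the cocycle identity gives
\[
\beta(g_1,g_2) = \beta|_{G_1}(g_1) + \pi(g_1,e)\,\beta|_{G_2}(g_2) = \beta|_{G_2}(g_2) + \pi(e,g_2)\,\beta|_{G_1}(g_1).
\]
Since $\mu = \mu_1 \times \mu_2$, integrating the first expression over $G$ and pulling the $G_2$-integral inside $\pi(g_1,e)$ (Fubini, legitimate as $\beta \in L^2(\mu)$) yields $\mu(\beta) = v_1 + \pi(\mu_1)v_2$, where I write $\pi(\mu_1)w = \int_{G_1}\pi(g_1,e)w\,d\mu_1(g_1)$; integrating the second expression gives $\mu(\beta) = v_2 + \pi(\mu_2)v_1$. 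Because $\beta$ is $\mu$-harmonic, $\mu(\beta)=0$ by Lemma \ref{L:hare}, so
\[
v_1 = -\pi(\mu_1)v_2, \qquad v_2 = -\pi(\mu_2)v_1.
\]

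The key step is to substitute one relation into the other. Since $G_1$ and $G_2$ commute, the operators $\pi(\mu_1)$ and $\pi(\mu_2)$ commute and $\pi(\mu_1)\pi(\mu_2) = \pi(\mu)$ (again Fubini, together with $\pi(g_1,e)\pi(e,g_2) = \pi(g_1,g_2)$). Hence
\[
v_1 = -\pi(\mu_1)v_2 = \pi(\mu_1)\pi(\mu_2)v_1 = \pi(\mu)v_1,
\]
and symmetrically $v_2 = \pi(\mu)v_2$. I expect this to be the one genuinely delicate point, and it is really a matter of sign bookkeeping: the two minus signs must cancel. This is exactly where a naive argument stalls, since reading invariance directly off $v_1 = -\pi(\mu_1)v_2$ only uses the norm identity $\|\pi(\mu_1)v_2\| = \|v_2\|$, which merely places $v_2$ in the $(\pm 1)$-eigenspace of the self-adjoint contraction $\pi(\mu_1)$, and a $(-1)$-eigenvector is not $G_1$-invariant. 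Composing the two relations promotes the problem to an honest $(+1)$-eigenvalue equation for the whole-group average $\pi(\mu)$, where no such ambiguity arises.

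Finally, from $\pi(\mu)v_i = v_i$ I would conclude $G$-invariance by the standard averaging computation: expanding
\[
\int_G \|\pi(g)v_i - v_i\|^2\,d\mu(g) = 2\|v_i\|^2 - 2\,\mathrm{Re}\langle \pi(\mu)v_i, v_i\rangle = 0
\]
shows $\pi(g)v_i = v_i$ for $\mu$-almost every $g$. As $g \mapsto \pi(g)v_i$ is continuous, the stabilizer of $v_i$ is a closed subgroup carrying the support of $\mu$, and since that support generates $G$ the stabilizer is all of $G$. Thus $v_1$ and $v_2$ are fixed points for $(G,\pi)$. The remaining manipulations — the Fubini steps and this closing averaging argument — are routine given the standing reasonableness hypotheses on $\mu$.
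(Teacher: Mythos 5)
Your proof is correct, and its core coincides exactly with the paper's: both derive the two relations $\pi(\mu_1)v_2 = -v_1$ and $\pi(\mu_2)v_1 = -v_2$ from harmonicity ($\mu(\beta)=0$, Lemma \ref{L:hare}) via Fubini and the cocycle identity, and both compose them using commutativity of the two factors to obtain $\pi(\mu)v_i = v_i$. Where you diverge is the final step, passing from $\pi(\mu)v_i = v_i$ to genuine $G$-invariance. The paper observes that $\varphi_i(g) = \pi(g)v_i - v_i$ is then a $\mu$-harmonic coboundary; since it is almost cohomologous to the zero cocycle, which is also $\mu$-harmonic, uniqueness of harmonic representatives (Theorem \ref{T:harmoniccocycles}) forces $\varphi_i = 0$. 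You instead run the elementary averaging argument: $\int_G \|\pi(g)v_i - v_i\|^2\,d\mu(g) = 2\|v_i\|^2 - 2\,\mathrm{Re}\langle \pi(\mu)v_i, v_i\rangle = 0$, so $\pi(g)v_i = v_i$ for $\mu$-almost every $g$, and the stabilizer of $v_i$ --- a closed subgroup containing $\mathrm{supp}\,\mu$ --- is all of $G$ by admissibility. Your closing is more self-contained, needing only unitarity, strong continuity, and the generating property of the support, whereas the paper's is a one-line appeal to machinery it has already built; both are sound. Your side remark about why the composition of the two relations is necessary --- that $v_1 = -\pi(\mu_1)v_2$ alone only places the vectors in a $(\pm 1)$-eigenspace of a self-adjoint contraction --- is a fair diagnosis of the one delicate point, which the paper handles identically but without comment.
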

\begin{proof}
Observe that
\begin{align*}
0 &= \int_{G} \beta(g)~d\mu(g)
= \int_{G_{2}}\int_{G_{1}} \beta(g_{1}g_{2})~d\mu_{1}(g_{1})~d\mu_{2}(g_{2}) \\
&= \int_{G_{2}} \int_{G_{1}} \beta(g_{1}) + \pi(g_{1})\beta(g_{2})~d\mu_{1}(g_{1})~d\mu_{2}(g_{2})
= v_{1} + \int_{G_{1}} \pi(g_{1}) v_{2}~d\mu_{1}(g_{1}) = v_{1} + \pi(\mu_{1})v_{2}
\end{align*}
and so $\pi(\mu_{1})v_{2} = -v_{1}$ (recall the convolution is defined as $\pi(\mu)v = \int \pi(g)v~d\mu(g)$).

Likewise, since the $G_{1}$ and $G_{2}$ actions commute, $\pi(\mu_{2})v_{1} = - v_{2}$.  Therefore
\[
\pi(\mu_{1} \times \mu_{2})v_{1} = \pi(\mu_{1})\pi(\mu_{2})v_{1} = \pi(\mu_{1})(-v_{2}) = v_{1}
\]
and so $\pi(\mu)v_{i} = v_{i}$ for $i = 1,2$.
Then $\varphi_{i}(g) = \pi(g)v_{i} - v_{i}$ are both $\mu$-harmonic coboundaries.  Since they are both almost cohomologous to $0$ they are both the unique harmonic representative of $[0]$ hence $\varphi_{1} = \varphi_{2} = 0$.  So $v_{1}$ and $v_{2}$ are fixed points.
\end{proof}

\subsection{Harmonic Functions on Products}

\begin{proposition}
Let $\beta$ be a $\mu$-harmonic cocycle for $(G_{1} \times G_{2},\pi)$.
Assume there are no nontrivial fixed points for $(G,\pi)$.  Then $\beta\big{|}_{G_{1}}$ takes values in the space of $\pi\big{|}_{G_{2}}$-invariant vectors and likewise for $\beta\big{|}_{G_{2}}$.
\end{proposition}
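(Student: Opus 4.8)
The plan is to show that each value $\beta|_{G_1}(g_1) = \beta(g_1,e)$ is fixed by $\pi(e,g_2)$ for every $g_2 \in G_2$; the assertion for $\beta|_{G_2}$ then follows verbatim with the roles of the two factors exchanged. First I would record what the preceding proposition gives: since $(G,\pi)$ has no nontrivial fixed points, the vectors $v_1,v_2$ there must both vanish. By definition $v_i$ is precisely the $\mu_i$-center of $\beta|_{G_i}$, so $\int_{G_i}\beta|_{G_i}(g_i)\,d\mu_i(g_i)=0$, which by Lemma \ref{L:hare} (applied on the factor $G_i$) says that each restriction $\beta|_{G_i}$ is $\mu_i$-harmonic.

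The structural input I would use next is the commutation of the two factors. Writing $b_i = \beta|_{G_i}$ and applying the cocycle identity to the two orderings of $(g_1,e)(e,g_2)=(e,g_2)(g_1,e)$ produces the single relation
\[
\pi(e,g_2)\,b_1(g_1) - b_1(g_1) = \pi(g_1,e)\,b_2(g_2) - b_2(g_2)
\]
valid for all $g_1\in G_1$ and $g_2\in G_2$. Integrating this against $d\mu_2(g_2)$ and using that $\int_{G_2} b_2(g_2)\,d\mu_2(g_2) = v_2 = 0$ makes the right-hand side integrate to $\pi(g_1,e)\cdot 0 - 0 = 0$, while the left-hand side collapses to $\pi(\mu_2)\,b_1(g_1) - b_1(g_1)$. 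Hence $\pi(\mu_2)\,b_1(g_1) = b_1(g_1)$: every value of $b_1$ is fixed by the convolution operator associated to the $G_2$-action.

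The remaining and genuinely substantive step is to upgrade invariance under $\pi(\mu_2)$ to invariance under all of $\pi|_{G_2}$. Here I would exploit that $\mu_2$ is symmetric, so $\pi(\mu_2)$ is self-adjoint: for any $w$ with $\pi(\mu_2)w = w$ one computes
\[
\int_{G_2}\|\pi(e,g_2)w - w\|^2\,d\mu_2(g_2) = 2\|w\|^2 - 2\langle \pi(\mu_2)w, w\rangle = 0,
\]
so $\pi(e,g_2)w = w$ for $\mu_2$-almost every $g_2$. Since $g_2\mapsto \pi(e,g_2)w - w$ is continuous, the stabilizer of $w$ is a closed subgroup of $G_2$ containing the support of $\mu_2$; as that support generates $G_2$ by admissibility, the stabilizer is all of $G_2$. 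Taking $w = b_1(g_1)$ yields that $b_1(g_1)$ is $\pi|_{G_2}$-invariant for every $g_1$, as claimed. I expect this last upgrade to be the only real obstacle, and it is precisely where symmetry and the generating support of $\mu_2$ enter; everything before it is a direct consequence of the cocycle identity together with the vanishing of the centers already furnished by the previous proposition.
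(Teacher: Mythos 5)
Your proof is correct, and its first two steps coincide with the paper's own argument: the paper likewise starts from $v_{1} = v_{2} = 0$ (supplied by the preceding proposition) and integrates the cocycle identity for $\beta(g_{1}g_{2})$ over one factor in two ways --- once as $\pi(g_{1})\beta(g_{2}) + \beta(g_{1})$, once (using that the factors commute) as $\pi(g_{2})\beta(g_{1}) + \beta(g_{2})$ --- which is algebraically the same as your ``subtract the two orderings, then integrate'' and yields the same conclusion, that every value of one restriction is stationary under the convolution operator of the other factor. Where you genuinely diverge is the upgrade from $\pi(\mu_{2})$-stationarity to honest $\pi\big{|}_{G_{2}}$-invariance. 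The paper dispatches this in a parenthetical ``by stationarity and the uniqueness of harmonic representatives'': unpacked, if $\pi(\mu_{2})w = w$ then the coboundary $g_{2} \mapsto \pi(e,g_{2})w - w$ has vanishing $\mu_{2}$-center, hence is $\mu_{2}$-harmonic by Lemma \ref{L:hare}, hence coincides with the harmonic representative $0$ of its reduced cohomology class by Theorem \ref{T:harmoniccocycles}. You instead give the elementary convexity argument: $\int_{G_{2}} \| \pi(e,g_{2})w - w \|^{2}~d\mu_{2}(g_{2}) = 2\|w\|^{2} - 2\langle \pi(\mu_{2})w, w \rangle = 0$, so $w$ is fixed $\mu_{2}$-almost everywhere, and the stabilizer of $w$, being a closed subgroup containing $\mathrm{supp}~\mu_{2}$, is all of $G_{2}$ by admissibility. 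Your route is self-contained --- it avoids re-invoking the uniqueness machinery of Section \ref{S:harmonic} and makes explicit exactly where admissibility enters --- while the paper's is shorter on the page but leans on the heavy theorem; both are valid. One cosmetic remark: the self-adjointness of $\pi(\mu_{2})$ that you flag is never actually used, since the identity $\int \| \pi(e,g_{2})w - w \|^{2}~d\mu_{2}(g_{2}) = 2\|w\|^{2} - 2\,\mathrm{Re}\langle \pi(\mu_{2})w, w \rangle$ holds for any probability measure and stationarity alone makes it vanish; symmetry of the measures enters your proof only through the previous proposition (the vanishing of the centers), not in this final step.
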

\begin{proof}
As there are no nontrivial $G$-fixed points in $\mathcal{H}$, $v_{1} = v_{2} = 0$ so
\[
\int_{G_{1}} \beta\big{|}_{G_{1}}~d\mu_{1} = v_{1} = 0
\]
meaning $\beta\big{|}_{G_{1}}$ is $\mu_{1}$-harmonic and likewise for $\beta\big{|}_{G_{2}}$.  Now for any $g_{2} \in G_{2}$,
\begin{align*}
\int_{G_{1}} \beta(g_{1} g_{2})~d\mu_{1}(g_{1}) &= \int_{G_{1}} \pi(g_{1}) \beta(g_{2})~d\mu_{1}(g_{1}) + \int_{G_{1}} \beta(g_{1})~d\mu_{1}(g_{1}) = \pi(\mu_{1})\beta(g_{2})
\end{align*}
and on the other hand, since the $G_{1}$ and $G_{2}$ actions commute,
\begin{align*}
\int_{G_{1}} \beta(g_{1} g_{2})~d\mu_{1}(g_{1})
&= \int_{G_{1}} \pi(g_{2}) \beta(g_{1})~d\mu_{1}(g_{1}) + \beta(g_{2})
= \pi(g_{2})v_{1} + \beta(g_{2}) = \beta(g_{2})
\end{align*}
and therefore $\beta(g_{2})$ is $\mu_{1}$-stationary for each $g_{2}$.  So each $\beta(g_{2})$ is a $G_{1}$-fixed point (by stationarity and the uniqueness of harmonic representatives).

Likewise each $\beta(g_{1})$ is a $G_{2}$-fixed point and so
\[
\beta(g_{1}, g_{2}) = \pi(g_{1})\beta(g_{2}) + \beta(g_{1}) = \beta(g_{2}) + \beta(g_{1})
\]
\[
\text{i.e.}\quad\beta = \beta\big{|}_{G_{1}} + \beta\big{|}_{G_{2}}
\]

Moreover, $\beta\big{|}_{G_{1}}$ takes values in the subspace of $G_{2}$-invariant vectors and likewise for $\beta\big{|}_{G_{2}}$.
\end{proof}

\subsection{The Nontrivial Fixed Points Case}

\begin{proposition}
Let $\beta$ be a $\mu$-harmonic cocycle for $(G_{1} \times G_{2},\pi)$.  Let $\mathcal{H}_{fixed}$ be the closed invariant subspace of $G$-invariant vectors.  Write $\beta_{fixed}$ for the projection of $\beta$ to $\mathcal{H}_{fixed}$.  Let $\mathcal{H}_{1}$ be the closed invariant subspace of $G_{2}$-invariant vectors and $\mathcal{H}_{2}$ be the closed invariant subspace of $G_{1}$-invariant vectors.

Then
\[
\beta = \beta_{fixed} + \beta_{1} + \beta_{2}
\]
where $\beta_{i}$ takes values in $\mathcal{H}_{i}$.
\end{proposition}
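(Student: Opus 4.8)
The plan is to reduce the general statement to the no-fixed-points case established in the preceding proposition by splitting off the $G$-invariant part of $\mathcal{H}$. Decompose the Hilbert space as $\mathcal{H} = \mathcal{H}_{fixed} \oplus \mathcal{K}$, where $\mathcal{K} = \mathcal{H}_{fixed}^{\perp}$. Since $\mathcal{H}_{fixed}$ is a closed $G$-invariant subspace, so is $\mathcal{K}$, and the restriction $\pi\big{|}_{\mathcal{K}}$ has no nonzero $G$-invariant vectors. Writing $\beta = \beta_{fixed} + \beta_{\mathcal{K}}$ for the corresponding projections, the first step is to check---exactly as in the proof of Theorem \ref{T:harmoniccocycles2}---that both $\beta_{fixed}$ and $\beta_{\mathcal{K}}$ are again cocycles: projecting the cocycle identity, the left-hand side lands in $\mathcal{H}_{fixed}$ and the right-hand side in $\mathcal{K}$, forcing each to vanish.

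Next I would verify that $\beta_{\mathcal{K}}$ is itself $\mu$-harmonic. Since $\beta$ is $\mu$-harmonic, Lemma \ref{L:hare} gives $\int_{G} \beta~d\mu = 0$; as the orthogonal projection $P_{\mathcal{K}}$ onto $\mathcal{K}$ is a bounded linear operator it commutes with the (Bochner) integral, so $\int_{G} \beta_{\mathcal{K}}~d\mu = P_{\mathcal{K}} \int_{G} \beta~d\mu = 0$, and Lemma \ref{L:hare} applies in reverse to give harmonicity of $\beta_{\mathcal{K}}$.

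With these two facts in hand, $\beta_{\mathcal{K}}$ is a $\mu$-harmonic cocycle for $(G_{1} \times G_{2}, \pi\big{|}_{\mathcal{K}})$, and $\pi\big{|}_{\mathcal{K}}$ has no nonzero fixed points. The previous proposition therefore yields $\beta_{\mathcal{K}} = \beta_{\mathcal{K}}\big{|}_{G_{1}} + \beta_{\mathcal{K}}\big{|}_{G_{2}}$, with $\beta_{\mathcal{K}}\big{|}_{G_{1}}$ taking values in the $G_{2}$-invariant vectors of $\mathcal{K}$ and $\beta_{\mathcal{K}}\big{|}_{G_{2}}$ in the $G_{1}$-invariant vectors of $\mathcal{K}$. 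Setting $\beta_{1} = \beta_{\mathcal{K}}\big{|}_{G_{1}}$ and $\beta_{2} = \beta_{\mathcal{K}}\big{|}_{G_{2}}$, these take values in $\mathcal{H}_{1} \cap \mathcal{K} \subseteq \mathcal{H}_{1}$ and $\mathcal{H}_{2} \cap \mathcal{K} \subseteq \mathcal{H}_{2}$ respectively, so that $\beta = \beta_{fixed} + \beta_{1} + \beta_{2}$ exactly as claimed.

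The main obstacle---really the only point requiring genuine care---is confirming that passing to the $G$-invariant complement $\mathcal{K}$ simultaneously preserves the cocycle property and $\mu$-harmonicity, and that the $G_{i}$-invariant subspaces of $\mathcal{K}$ are precisely $\mathcal{H}_{i} \cap \mathcal{K}$, so that the output of the no-fixed-points proposition truly lands inside the ambient $\mathcal{H}_{i}$. Both are direct verifications modeled on the fixed-point splitting in the proof of Theorem \ref{T:harmoniccocycles2}; once they are in place the decomposition follows immediately.
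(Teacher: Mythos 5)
Your proof is correct and follows essentially the same route as the paper: decompose $\mathcal{H}$ into $\mathcal{H}_{fixed}$ and its orthogonal complement, check that the projections of $\beta$ are again cocycles with the complementary part still $\mu$-harmonic, and then invoke the no-fixed-points proposition. The only cosmetic difference is that you obtain harmonicity of $\beta_{\mathcal{K}}$ by commuting the orthogonal projection with the Bochner integral and applying Lemma \ref{L:hare}, whereas the paper instead deduces harmonicity of $\beta_{fixed}$ from the symmetry of $\mu$ and the relation $\beta_{fixed}(g^{-1}) = -\beta_{fixed}(g)$; both are the same appeal to Lemma \ref{L:hare}.
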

\begin{proof}
Dropping the assumption that there were no $G$-fixed points, and writing $\beta_{fixed}$ for $\beta$ projected onto the fixed points in $\mathcal{H}$, $\beta = \beta_{unfixed} + \beta_{fixed}$ where $\beta_{unfixed}$ is the projection onto the complement of the fixed points.

Since $\beta_{fixed}$ maps into invariant vectors and there are no nontrivial coboundaries for $\pi\big{|}_{fixed}$
 (and since $\beta_{0}(g^{-1}) = - \pi(g^{-1})\beta_{0}(g) = - \beta_{0}(g)$), $\mu$ being symmetric implies $\beta_{fixed}$ is $\mu$-harmonic.
 
 Therefore, every $\mu$-harmonic cocycle $\beta$ for $(G_{1}\times G_{2},\pi)$ is of the form $\beta = \beta_{fixed} + \beta\big{|}_{G_{1}} + \beta\big{|}_{G_{2}}$.  The claim then follows from the previous proposition.
 \end{proof}
 
 \subsection{Reduced Cohomology of Products}
 
By the uniqueness of harmonic cocycles, Theorem \ref{T:harmoniccocycles}, at the level of reduced cohomology this means:
\begin{theorem}
Let $\pi : G_{1} \times G_{2} \to \mathcal{U}(\mathcal{H})$ be a unitary representation of a product of groups.  Then
\[
\overline{H^{1}}(G_{1} \times G_{2}, \pi) = \overline{H^{1}}(G_{1},\pi\big{|}_{G_{1}}) \oplus \overline{H^{1}}(G_{2},\pi\big{|}_{G_{2}}) \oplus \overline{H^{1}}(G_{1} \times G_{2}, \pi\big{|}_{\mathcal{H}_{fixed}})
\]
\end{theorem}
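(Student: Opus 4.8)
The plan is to leverage the correspondence between reduced cohomology classes and $\mu$-harmonic cocycles furnished by Theorem \ref{T:harmoniccocycles}, and then to read off the claimed splitting from the structural decomposition of harmonic cocycles established in the preceding proposition. Concretely, fix the reasonable product measure $\mu = \mu_1 \times \mu_2$. By Theorem \ref{T:harmoniccocycles} each class $[\beta] \in \overline{H^1}(G_1 \times G_2, \pi)$ has a unique $\mu$-harmonic representative, so $\overline{H^1}(G_1 \times G_2, \pi)$ is linearly isomorphic to the space of $\mu$-harmonic cocycles. On that space the preceding proposition provides the splitting $\beta = \beta_{fixed} + \beta_1 + \beta_2$, with $\beta_{fixed}$ valued in $\mathcal{H}_{fixed}$, $\beta_1$ valued in the $G_2$-invariants and $\beta_2$ in the $G_1$-invariants; since the target subspaces are $G$-invariant and the projections onto them are linear, this is a direct-sum decomposition of the space of harmonic cocycles.

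First I would identify each summand with the asserted reduced cohomology group. The map $[\beta] \mapsto [\beta|_{G_1}]$, given by restriction to $G_1$ followed by projection to the orthocomplement of $\mathcal{H}_{fixed}$ inside the $G_2$-invariant vectors, is well defined on reduced cohomology, since restrictions of coboundaries are coboundaries and the defining strong limits are preserved by restriction; the preceding propositions show that on the harmonic representative this restriction is exactly $\beta_1$, which is itself the $\mu_1$-harmonic representative of its $G_1$-class. Thus this map realizes the projection onto the $\beta_1$-summand and takes values in $\overline{H^1}(G_1, \pi|_{G_1})$, and symmetrically for $G_2$. The third summand is handled directly: on $\mathcal{H}_{fixed}$ the representation is trivial, every cocycle is a homomorphism, and there are no nonzero coboundaries, so $\beta_{fixed}$ is already the unique representative of its class in $\overline{H^1}(G_1 \times G_2, \pi|_{\mathcal{H}_{fixed}})$.

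To see that these assemble into an isomorphism I would exhibit the inverse explicitly. Given a $\mu_1$-harmonic $G_1$-cocycle $\gamma_1$ valued in the $G_2$-invariants, the formula $\gamma(g_1, g_2) = \gamma_1(g_1)$ defines a $\mu$-harmonic $(G_1 \times G_2)$-cocycle: the cocycle identity uses that $\gamma_1(g_1)$ is $G_2$-invariant, and harmonicity follows by integrating $\mu_2$ out first. Doing the same for $G_2$ and for the fixed part and adding the three cocycles produces a harmonic cocycle whose components recover the given data, which yields surjectivity. Injectivity is then immediate: if all three components of the harmonic representative of $[\beta]$ vanish in their respective groups, each component is a harmonic cocycle almost cohomologous to zero, hence equal to zero by the uniqueness in Theorem \ref{T:harmoniccocycles}, so $[\beta] = 0$.

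The step I expect to be the main obstacle is the fixed-point bookkeeping needed to make the sum genuinely direct. The subtlety is that the $G_2$-invariant subspace in which $\beta_1$ nominally lives contains $\mathcal{H}_{fixed}$, and a homomorphism $G_1 \to \mathcal{H}_{fixed}$ is simultaneously $\mu_1$-harmonic and valued in the fixed vectors, so one must ensure that it is not counted both in $\overline{H^1}(G_1, \pi|_{G_1})$ and in $\overline{H^1}(G_1 \times G_2, \pi|_{\mathcal{H}_{fixed}})$. This is resolved by using in the first two summands only the sub-representation on the $G_2$- (respectively $G_1$-) invariants orthogonal to $\mathcal{H}_{fixed}$, which is exactly the range in which the preceding proposition places $\beta_1$ and $\beta_2$, as they are the restrictions of $\beta_{unfixed}$. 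The three ranges $\mathcal{H}_{fixed}$, $\mathcal{H}_1 \ominus \mathcal{H}_{fixed}$ and $\mathcal{H}_2 \ominus \mathcal{H}_{fixed}$ are then mutually orthogonal, so the decomposition of harmonic cocycles descends to an honest direct sum of reduced cohomology groups.
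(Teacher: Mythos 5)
Your proposal is correct and follows exactly the paper's route: the paper's entire proof of this theorem is the one-line remark that the decomposition $\beta = \beta_{fixed} + \beta_{1} + \beta_{2}$ of $\mu$-harmonic cocycles from the preceding propositions, combined with the unique harmonic representative furnished by Theorem \ref{T:harmoniccocycles}, passes to reduced cohomology. Your final bookkeeping step is in fact a needed sharpening rather than a digression: read literally (with $\pi\big|_{G_{1}}$ acting on all of $\mathcal{H}$, or even on all of $\mathcal{H}_{1}$), the right-hand side double-counts homomorphisms $G_{1} \to \mathcal{H}_{fixed}$, and your restriction of the first two summands to coefficients in $\mathcal{H}_{i} \ominus \mathcal{H}_{fixed}$ is exactly what makes the sum honestly direct.
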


\section{Quotients of Lattices}

\subsection{Lattices}

\begin{definition}
Let $G$ be a locally compact second countable group and $\Gamma < G$ a discrete subgroup (meaning that $\Gamma$ is a discrete set in the $G$-topology).  A \emph{fundamental domain} for $\Gamma$ is a set $F \subset G$ such that $F\Gamma = G$ and $\gamma~F \cap \gamma^{\prime}~F = \emptyset$ for $\gamma \ne \gamma^{\prime} \in \Gamma$.
\end{definition}

\begin{definition}
A discrete subgroup $\Gamma < G$ is a \emph{lattice} when there exists a fundamental domain $F$ with finite Haar measure: $\mathrm{Haar}_{G}(F) < \infty$.
\end{definition}

For $\Gamma < G$ a lattice, if $g \in G$ and $f \in F$ then $gf \in G$ so there is a unique element $\gamma \in \Gamma$ such that $gf\gamma \in F$ (this follows from the disjointness of the translates of $F$).

\begin{definition}
Let $\Gamma < G$ be a lattice.  The associated cocycle to a fundamental domain $F$ is $\alpha : G \times F \to \Gamma$ given by $\alpha(g,f)$ such that $gf\alpha(g,f) \in F$.
\end{definition}

Observe that for $g,h \in G$ and $f \in F$, $f_{0} = hf\alpha(h,f) \in F$ and so
\[
ghf\alpha(h,f)\alpha(g,hf\alpha(h,f)) = g(hf\alpha(h,f))\alpha(g,hf\alpha(h,f)) = gf_{0}\alpha(g,f_{0}) \in F
\]
meaning that
\[
\alpha(gh,f) = \alpha(h,f)\alpha(g,hf\alpha(h,f))
\]
which is why $\alpha$ is referred to as a cocycle (note: this is related to but not the same as the cocycle definition for representations).

\begin{definition}
A lattice $\Gamma < G$ is \emph{irreducible} when for every $N \normal G$, $N \ne G$, the projection of $\Gamma$ to $G/N$ is dense.

In particular, if $G_{1}$ and $G_{2}$ are simple then $\Gamma < G_{1} \times G_{2}$ is irreducible when $\overline{\mathrm{proj}_{G_{i}}~\Gamma} = G_{i}$.
\end{definition}

\subsection{Induced Representations}

\begin{definition}
A lattice is integrable when $\int_{F} |\alpha(g,f)|^{2}~dm(f) < \infty$.  (The finiteness of the integral does not depend on the choice of fundamental domain).
\end{definition}

Let $\Gamma$ be an integrable lattice in $G$.  Let $\pi : \Gamma \to \mathcal{U}(\mathcal{H})$ be a unitary representation of $\Gamma$ on a Hilbert space.  Fix a fundamental domain $F$ and corresponding cocycle $\alpha : G \times F \to \Gamma$.  Let $\tilde{\mathcal{H}} = L^{2}(F,\mathcal{H})$ be the Hilbert space of square integrable functions on $F$ taking values in $\mathcal{H}$.  Define
\[
\tilde{\pi} : G \to \tilde{\mathcal{H}} \quad\quad\text{by}\quad\quad (\tilde{\pi}(g)q)(f) = \pi(\alpha(g^{-1},f))~q(g^{-1}f\alpha(g^{-1},f))
\]
the reader may verify this is a unitary representation of $G$ (which requires the integrability condition on $\Gamma$).  For a cocycle $\varphi : \Gamma \to \mathcal{H}$, define
\[
\tilde{\varphi} : G \to \tilde{\mathcal{H}} \quad\quad\text{by}\quad\quad \tilde{\varphi}(g)(f) = \varphi(\alpha(g^{-1},f))
\]
the reader may verify that this is a cocycle and that the representation and cocycle are continuous (see Shalom \cite{shalom}).

\begin{definition}
The representation $\tilde{\pi}$ on $G$ is the \emph{induced representation} and the cocycle $\tilde{\varphi}$ is the \emph{induced cocycle}.
\end{definition}

The mapping $\varphi \mapsto \tilde{\varphi}$ induces an isomorphism of the cohomologies $H^{1}(\Gamma,\pi) \to H^{1}(G,\tilde{\pi})$ and of the reduced cohomologies $\overline{H^{1}}(\Gamma,\pi) \to \overline{H^{1}}(G,\tilde{\pi})$ (Proposition 1.11 \cite{shalom}).

\subsection{Harmonic Cocycles on Quotients of Lattices in Products}

We close the paper with a new result, Theorem \ref{T:new}, giving a new proof of a result of Shalom:
\begin{theorem}\label{T:new}
Let $N \normal \Gamma$ be a nontrivial normal subgroup of an irreducible integrable lattice $\Gamma < G = G_{1} \times \cdots \times G_{k}$ in a product of at least $k \geq 2$ simple compactly generated groups.  For every unitary representation of $\Gamma / N$ without invariant vectors, the only harmonic cocycle on $\Gamma / N$ (for any reasonable measure) is $0$.
\end{theorem}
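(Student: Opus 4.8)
The plan is to convert the statement into the vanishing of a harmonic cocycle, lift it from $\Gamma/N$ to $\Gamma$, and then transport it to the ambient product $G$ by induction, where the product decomposition of Theorem~\ref{T:sha} becomes available. First I would fix a reasonable measure $\bar\mu$ on $\Gamma/N$, a representation $\sigma$ of $\Gamma/N$ with no invariant vectors, and a $\bar\mu$-harmonic cocycle $\beta$ for $\sigma$; the goal is $\beta = 0$. Writing $q : \Gamma \to \Gamma/N$ for the quotient map, set $\pi = \sigma \circ q$ and $\bar\beta = \beta \circ q$. Then $\pi$ has no $\Gamma$-invariant vectors (they coincide with those of $\sigma$), and $\bar\beta$ is a $\pi$-cocycle vanishing on $N$. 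Choosing a reasonable $\mu$ on $\Gamma$ with $q_{*}\mu = \bar\mu$ makes $\bar\beta$ a $\mu$-harmonic cocycle, since $\int_{\Gamma}\bar\beta(\gamma\gamma')\,d\mu(\gamma') = \int_{\Gamma/N}\beta(q(\gamma)x)\,d\bar\mu(x) = \beta(q(\gamma)) = \bar\beta(\gamma)$. By the uniqueness half of Theorem~\ref{T:harmoniccocycles}, it suffices to show that $\bar\beta$ is almost cohomologous to $0$: since $0$ is the unique harmonic representative of $[0]$, this forces $\bar\beta = 0$ and hence $\beta = 0$.

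Next I would induce to $G$. The induction isomorphism $\overline{H^{1}}(\Gamma,\pi) \cong \overline{H^{1}}(G,\tilde\pi)$ identifies $[\bar\beta]$ with $[\tilde{\bar\beta}]$, so it is enough to show $[\tilde{\bar\beta}] = 0$. Let $\varphi$ be the unique harmonic representative of $[\tilde{\bar\beta}]$ for a reasonable product measure on $G$ (Theorem~\ref{T:harmoniccocycles}). By Theorem~\ref{T:sha}, $\varphi = \beta_{fixed} + \sum_{j} \beta_{j}\circ p_{j}$, where $p_{j} = \mathrm{proj}_{G_{j}}$, the cocycle $\beta_{fixed}$ takes values in the $G$-invariant vectors, and each $\beta_{j}$ is a $G_{j}$-cocycle taking values in the $\check{G}_{j}$-invariant vectors. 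Because $\pi$ has no $\Gamma$-invariant vectors, $\tilde\pi$ has no $G$-invariant vectors (the $G$-fixed subspace of $L^{2}(F,\mathcal{H})$ is canonically $\mathcal{H}^{\Gamma} = 0$), so $\beta_{fixed} = 0$. It remains to show each $\beta_{j} = 0$, and this is where $N$ enters.

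The structural input is that $N$ projects densely into every factor: $\overline{\mathrm{proj}_{G_{i}}(N)}$ is normalized by $\mathrm{proj}_{G_{i}}(\Gamma)$, which is dense in $G_{i}$ by irreducibility, hence is normal in the simple group $G_{i}$ and so equals $\{e\}$ or $G_{i}$; the case $\{e\}$ places $N$ inside the discrete group $\Gamma \cap \check{G}_{i}$, whose closure is a normal subgroup of $\check{G}_{i}$ and so is trivial, contradicting $N \neq \{e\}$. Transporting the vanishing of $\bar\beta$ on $N$ through the induction isomorphism — by restricting $\varphi = \sum_{j}\beta_{j}\circ p_{j}$ to the diagonal copy of $\Gamma$ and matching it with $\bar\beta$ via uniqueness of harmonic representatives — I would obtain, for every $n \in N$, the exact identity $\sum_{j}\beta_{j}(p_{j}(n)) = 0$. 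Now fix $i$: for $j \neq i$ the vector $\beta_{j}(p_{j}(n))$ lies in $\mathcal{H}^{\check{G}_{j}} \subseteq \mathcal{H}^{G_{i}}$, so $\beta_{i}(p_{i}(n)) = -\sum_{j\neq i}\beta_{j}(p_{j}(n)) \in \mathcal{H}^{G_{i}}$; since also $\beta_{i}(p_{i}(n)) \in \mathcal{H}^{\check{G}_{i}}$ and $\mathcal{H}^{G_{i}} \cap \mathcal{H}^{\check{G}_{i}} = \mathcal{H}^{G} = 0$, we get $\beta_{i}(p_{i}(n)) = 0$ for all $n \in N$. By continuity of $\beta_{i}$ and density of $p_{i}(N)$ in $G_{i}$, this gives $\beta_{i} \equiv 0$. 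Hence $\varphi = 0$, so $[\bar\beta] = 0$, $\bar\beta = 0$, and $\beta = 0$.

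The main obstacle is the transport step, namely deducing the exact relation $\sum_{j}\beta_{j}(p_{j}(n)) = 0$ on $N$ from the vanishing of $\bar\beta$: one must reconcile Shalom's induction-restriction with the fact that $\tilde\pi\big{|}_{\Gamma}$ is not $\pi$, and upgrade ``almost cohomologous'' to a genuine equality, where the uniqueness of harmonic representatives (Theorem~\ref{T:harmoniccocycles}) is essential. By contrast the final separation via $\mathcal{H}^{G_{i}} \cap \mathcal{H}^{\check{G}_{i}} = 0$ and the density argument are routine, and the excluded degenerate projection is a minor point. Finally, running the same argument with the trivial representation to rule out nonzero homomorphisms $\Gamma/N \to \mathbb{R}$, Theorem~\ref{T:Tharmonic} recovers Shalom's conclusion that $\Gamma/N$ has property $(T)$.
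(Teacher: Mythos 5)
Your first two stages (lifting the cocycle and the representation from $\Gamma/N$ to $\Gamma$, choosing a reasonable $\mu$ on $\Gamma$ whose pushforward under the quotient map is $\bar\mu$ so that the lift is $\mu$-harmonic, inducing to $G$, invoking Theorem~\ref{T:sha}, and discarding $\beta_{fixed}$) coincide with the paper's proof, apart from the fact that the paper verifies explicitly (via the product construction with a measure $\sigma$ on $N$ and a second-moment estimate) that such a reasonable $\mu$ exists. Your endgame (density of $\mathrm{proj}_{G_i}(N)$ in $G_i$, plus the separation argument using that the $G_i$-invariant and $\check G_i$-invariant subspaces intersect only in the $G$-invariant vectors, which are $0$) is also fine as far as it goes. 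The genuine gap is exactly the step you yourself flag as the ``main obstacle'': the identity $\sum_j\beta_j(p_j(n))=0$ for $n\in N$ is never established, and the mechanism you sketch for it cannot work as stated. Restricting the harmonic representative $\varphi$ to the copy of $\Gamma$ inside $G$ does not produce a cocycle for $\pi$: the induced representation $\tilde\pi$ restricted to $\Gamma$ acts on all of $L^2(F,\mathcal{H})$ through the fundamental-domain cocycle $\alpha$, so there is nothing to ``match'' against $\bar\beta$; moreover, restriction to a lattice destroys harmonicity (harmonicity of $\varphi$ is with respect to a product measure on $G$), so the uniqueness half of Theorem~\ref{T:harmoniccocycles} cannot be applied on $\Gamma$. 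Most concretely, vanishing of $\bar\beta$ on $N$ does not transport through induction: by definition $\tilde{\bar\beta}(n)(f)=\bar\beta(\alpha(n^{-1},f))$, and $\alpha(n^{-1},f)$ is in general not an element of $N$, so even the induced cocycle itself need not vanish on $N$; the harmonic representative $\varphi$ differs from it by a limit of $\tilde\pi$-coboundaries and has no more reason to vanish there. Since at the end of the proof one learns that $\varphi=0$ identically, your intermediate identity is true only a posteriori, and using it as a stepping stone is circular.

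The paper fills this hole with what is really the technical core of the argument, and it never evaluates any cocycle on $N$. One takes a $(\Gamma,\pi)$-cocycle $b_1$ with $\tilde b_1$ cohomologous to $\beta_1$ (induction is an isomorphism on ordinary cohomology), writes $\beta_1(g)=\tilde b_1(g)+\tilde\pi(g)v-v$ for some $v\in\tilde{\mathcal{H}}$, and defines a measurable function $q_0:G\to\mathcal{H}$ by $q_0(g)=\pi(\alpha(g^{-1},e))\,v(g^{-1}\alpha(g^{-1},e))+b_1(\alpha(g^{-1},e))$, i.e.\ a pointwise evaluation of the induced data along the orbit of the identity coset. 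This function satisfies the equivariance $q_0(\gamma g)=\pi(\gamma)q_0(g)+b_1(\gamma)$, hence is left $N$-invariant because $\pi$ and $b_1$ are trivial on $N$; and because $\beta_1$ depends only on the $G_1$-coordinate, the smoothing $q(g)=\int q_0(gh)\phi(h)\,dh$ by a compactly supported bump $\phi$ is continuous and right $\check G_1$-invariant, so it factors through $\mathrm{proj}_{G_1}$. Density of $\mathrm{proj}_{G_1}(N)$ in $G_1$ together with the left $N$-invariance then forces $q$ to be constant, hence $q_0$ is constant almost everywhere, hence $\beta_1$ is constant almost everywhere, and harmonicity gives $\beta_1=0$; the same argument handles the other $\beta_j$. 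Some version of this equivariant-function argument must be inserted at your transport step; without it the proposal is incomplete at precisely its decisive point.
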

\begin{proof}
Let $\pi_{0} : \Gamma / N \to \mathcal{U}(\mathcal{H})$ be a unitary representation without invariant vectors.  Let $\mu_{0}$ be a reasonable measure on $\Gamma / N$ and $\varphi_{0} : \Gamma / N \to \mathcal{H}$ a $\mu_{0}$-harmonic cocycle.

Define $\pi : \Gamma \to \mathcal{U}(\mathcal{H})$ by $\pi(\gamma) = \pi_{0}(\gamma N)$ and $\varphi : \Gamma \to \mathcal{H}$ by $\varphi(\gamma) = \varphi_{0}(\gamma N)$.

Let $\gamma_{1},\gamma_{2},\ldots$ be a system of representatives for $\Gamma / N$ and let $\sigma \in P(N)$ be a reasonable measure on $N$.  Define $\mu_{1} \in P(\Gamma)$ by
\[
\mu_{1}(\gamma_{i}n) = \mu_{0}(\gamma_{i}N)\sigma(n)
\]
and define $\mu = (1/2)(\mu_{1} + \check{\mu_{1}})$ so $\mu$ is symmetric.  As the support of $\mu_{0}$ generates $\Gamma / N$ and that of $\sigma$ generates $N$, the support of $\mu$ generates $\Gamma$.  Writing $m_{t}$ for the $t^{th}$ moment,
\begin{align*}
m_{2}(\mu_{1}) &= \sum_{i} \sum_{n} |\gamma_{i}n|^{2}\mu_{0}(\gamma_{i}N)\sigma(n) \\
&\leq \Big{(}\sum_{i}|\gamma_{i}|^{2}\mu_{0}(\gamma_{i}N)\Big{)}\Big{(}\sum_{n}\sigma(n)\Big{)} \\ &\quad\quad\quad\quad + 2\Big{(}\sum_{i}|\gamma_{i}|\mu_{0}(\gamma_{i}N)\Big{)}\Big{(}\sum_{n}|n|\sigma(n)\Big{)} + \Big{(}\sum_{i}\mu_{0}(\gamma_{i}N)\Big{)}\Big{(}\sum_{n}|n|^{2}\sigma(n)\Big{)} \\
&= m_{2}(\mu_{0}) + 2m_{1}(\mu_{0})m_{1}(\sigma) + m_{2}(\sigma) < \infty
\end{align*}
so $\mu_{1}$, hence $\mu$, has finite second moment, meaning $\mu$ is a reasonable measure on $\Gamma$.

Now observe that
\begin{align*}
\sum_{\gamma} \varphi(\gamma)\mu_{1}(\gamma) &=
\sum_{i} \sum_{n} \varphi(\gamma_{i}n_{j})\mu_{1}(\gamma_{i}n) \\
&= \sum_{i} \sum_{n} \varphi_{0}(\gamma_{i}N) \mu_{0}(\gamma_{i} N)\sigma(n) \\
&= \sum_{i} \varphi_{0}(\gamma_{i}N) \mu_{0}(\gamma_{i} N) = 0
\end{align*}
and likewise that
\begin{align*}
\sum_{\gamma} \varphi(\gamma)\check{\mu_{1}}(\gamma) &=
\sum_{i} \sum_{n} \varphi((\gamma_{i}n)^{-1})\mu_{1}(\gamma_{i}n) \\
&= \sum_{i} \sum_{n} \varphi(\gamma_{i}^{-1}n^{\prime})\mu_{1}(\gamma_{i}n) \\
&= \sum_{i} \sum_{n} \varphi_{0}(\gamma_{i}^{-1}N) \mu_{0}(\gamma_{i} N)\sigma(n) \\
&= \sum_{i} \varphi_{0}(\gamma_{i}^{-1}N) \mu_{0}(\gamma_{i} N) = 0
\end{align*}
since $\mu_{0}$ is symmetric.  Therefore $\varphi$ is $\mu$-harmonic.

Let $\tilde{\pi}$ be the induced representation.
Let $\tilde{\varphi}$ be the induced cocycle.  By Theorem \ref{T:sha}, $\tilde{\varphi}$ is almost cohomologous to an $\alpha_{1}\times\cdots\times\alpha_{k}$-harmonic $(G,\tilde{\pi})$-coycle $\beta$.  As there are no invariant vectors for $\pi$ there are none for $\tilde{\pi}$ so $\beta = \beta_{1} + \cdots + \beta_{k}$ where $\beta_{j}$ is a $G_{j}$-cocycle taking values in the $\check{G}_{j}$-invariant vectors.  

Since $\beta_{1}$ is a $(G,\tilde{\pi})$-cocycle, there exists a $(\Gamma,\pi)$-cocycle $b_{1}$ such that $\tilde{b_{1}}$ is cohomologous to $\beta_{1}$ as the mapping $b \mapsto \tilde{b}$ on cocycles is an isomorphism of cohomology.  So there exists $v \in \tilde{\mathcal{H}}$ such that for all $g$
\[
\beta_{1}(g) = \tilde{b_{1}}(g) + \tilde{\pi}(g)v - v
\]
meaning that for all $g$, for almost every $f \in F$,
\[
\beta_{1}(g)(f) + v(f) = \tilde{b_{1}}(g)(f) + \tilde{\pi}(g)v(f) = b_{1}(\alpha(g^{-1},f)) + \pi(\alpha(g^{-1},f))v(g^{-1}f\alpha(g^{-1},f))
\]

Define the measurable function $q_{0} : G \to \mathcal{H}$ by
\[
q_{0}(g) = \pi(\alpha(g^{-1},e))v(g^{-1}\alpha(g^{-1},e)) + b_{1}(\alpha(g^{-1},e))
\]
(defined for almost every $g$ by considering $f = g^{-1}\alpha(g^{-1},e)$).

Note that for $\gamma \in \Gamma$ we have $\alpha(g^{-1}\gamma^{-1},e) = \gamma\alpha(g^{-1},e)$ so
\[
q_{0}(\gamma g) = \pi(\gamma)q_{0}(g) + b_{1}(\gamma)
\]
Since
\[
\int \| q_{0}(f^{-1}) \|^{2}~dm(f) = \int \| v(f) \|^{2}~dm(f) = \| v \| < \infty
\]
 it follows that $q \in L^{\infty}(G,\mathcal{H})$.

Since $\pi(N) = \mathrm{id}$ and $b_{1}(N) = 0$ as $\pi$ is trivial on $N$, for $n \in N$ and $g \in G$ we have
\[
q_{0}(ng) = \pi(n)q_{0}(g) + b_{1}(n) = q_{0}(g)
\]

Define $q : G \to \mathcal{H}$ by taking $\phi : G \to [0,1]$ a continuous compactly supported function with $\int \phi~dh = 1$ and setting
\[
q(g) = \int q_{0}(gh)~\phi(h)dh
\]
so that $q$ is continuous.

Observe that for $g,h \in G$,
\begin{align*}
q_{0}(gh) &= \pi(\alpha(g^{-1},e))\pi(\alpha(h^{-1},g^{-1}\alpha(g^{-1},e)))v(h^{-1}g^{-1}\alpha(h^{-1}g^{-1},e)) \\
&\quad\quad\quad + \pi(\alpha(g^{-1},e))b_{1}(\alpha(h^{-1},g^{-1}\alpha(g^{-1},e))) + b_{1}(\alpha(g^{-1},e)) \\
&= \pi(\alpha(g^{-1},e))\big{[}\tilde{\pi}(h)v + \tilde{b_{1}}(h)\big{]}(g\alpha(g^{-1},e)) + b_{1}(\alpha(g^{-1},e))
\end{align*}
so for each $h \in G$, for almost every $g$,
\[
q_{0}(gh) = \pi(\alpha(g^{-1},e))\big{[}v + \beta_{1}(h)\big{]}(g^{-1}\alpha(g^{-1},e)) + b_{1}(\alpha(g^{-1},e))
\]

Since $\beta_{1}(g_{2}h) =  \beta_{1}(h)$ for $g_{2} \in \check{G}_{1}$, this means that for each $g_{2} \in \check{G}_{1}$ and almost every $g,h \in G$,
\begin{align*}
q_{0}(gg_{2}h) &= \pi(\alpha(g^{-1},e))\big{[}v + \beta_{1}(g_{2}h)\big{]}(g^{-1}\alpha(g^{-1},e)) + b_{1}(\alpha(g^{-1},e)) \\
&= \pi(\alpha(g^{-1},e))\big{[}v + \beta_{1}(h)\big{]}(g^{-1}\alpha(g^{-1},e)) + b_{1}(\alpha(g^{-1},e)) = q_{0}(gh)
\end{align*}
and so for almost every $g$,
\[
q(gg_{2}) = \int q_{0}(gg_{2}h)~\phi(h)dh = \int q_{0}(gh)~\phi(h)dh = q(g)
\]
As $q$ is continuous, then $q(gg_{2}) = q(g)$ for all $g \in G$ and $g_{2} \in \check{G}_{1}$ meaning that $q(g) = q(\mathrm{proj}_{1}~g)$.

As $N$ projects densely to $G_{1}$, for any $g \in G$ there exists $n_{\ell} \in N$ such that $\mathrm{proj}_{1}~n_{\ell} \to \mathrm{proj}_{1}~g$.  Since $q(ng) = \int q_{0}(ngh)~\phi(h)dh = \int q_{0}(gh)~\phi(h)dh = q(g)$ for $n \in N$ then, using that $q$ is continuous,
\[
q(g) = q(\mathrm{proj}_{1}~g) = \lim q(\mathrm{proj}_{1}~n_{\ell}) = \lim q(n_{\ell}) = q(e)
\]
meaning that $q$ is constant.  As $\phi$ was arbitrary, then $q_{0}$ must be constant almost everywhere.

Since $q_{0}(gh) = \pi(\alpha(g^{-1},e))\big{(}v + \beta_{1}(h)\big{)}(g\alpha(g^{-1},e)) + b_{1}(\alpha(g^{-1},e))$ for almost all $g,h$, this in turn means that $\beta_{1}(h)$ is constant over almost every $h$.  As it is harmonic, then $\beta_{1} = 0$.

The same holds for $\beta_{2}, \ldots, \beta_{k}$ by the same argument, and therefore $\tilde{\varphi}$ is almost cohomologous to $0$.  Since the map $\varphi \mapsto \tilde{\varphi}$ is an isomorphism of reduced cohomology, this means $\varphi$ is almost cohomologous to $0$.  But $\varphi$ and $0$ are both $\mu$-harmonic hence $\varphi = 0$ so $\varphi_{0} = 0$.
\end{proof}

Combining Theorem \ref{T:new} and Theorem \ref{T:Tharmonic} gives a new proof of:
\begin{corollary}[Shalom \cite{shalom}]
Let $\Gamma < G_{1} \times G_{2}$ be an irreducible lattice in a product of simple locally compact compactly generated groups and let $N \normal \Gamma$.  Then $\Gamma / N$ has property $(T)$.
\end{corollary}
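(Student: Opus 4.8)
The plan is to invoke Theorem \ref{T:Tharmonic} in the direction asserting that if, for some fixed reasonable measure, every harmonic cocycle across all unitary representations is zero, then $\Gamma/N$ has property $(T)$. So I fix a reasonable measure $\mu_{0}$ on $\Gamma/N$, take an arbitrary unitary representation $\pi_{0} : \Gamma/N \to \mathcal{U}(\mathcal{H})$ and a $\mu_{0}$-harmonic cocycle $\varphi_{0}$, and aim to show $\varphi_{0} = 0$. Decomposing $\mathcal{H} = \mathcal{H}_{fixed} \oplus \mathcal{H}_{unfixed}$ into the $\pi_{0}(\Gamma/N)$-invariant vectors and their complement, the splitting argument from the proof of Theorem \ref{T:harmoniccocycles2} writes $\varphi_{0} = \varphi_{fixed} + \varphi_{unfixed}$ with both summands again $\mu_{0}$-harmonic cocycles.

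For the unfixed part, the restriction of $\pi_{0}$ to $\mathcal{H}_{unfixed}$ has no nonzero invariant vectors, so Theorem \ref{T:new} applies directly and gives $\varphi_{unfixed} = 0$.

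The remaining, and genuinely subtle, case is the fixed part: since $\pi_{0}$ is trivial on $\mathcal{H}_{fixed}$, the cocycle $\varphi_{fixed}$ is a continuous homomorphism $\Gamma/N \to (\mathcal{H}_{fixed},+)$, i.e.\ a family of real characters, which is exactly the situation Theorem \ref{T:new} excludes. To kill it I would pull $\varphi_{fixed}$ back to a homomorphism $\varphi : \Gamma \to \mathcal{H}_{fixed}$ trivial on $N$ and induce up to $G$; the induced representation is a multiple of the quasi-regular representation $L^{2}(G/\Gamma)$. Theorem \ref{T:sha} then writes the induced cocycle as $\beta_{0} + \beta_{1} + \cdots + \beta_{k}$ with $\beta_{0}$ valued in the $G$-invariant vectors and each $\beta_{j}$ a $G_{j}$-cocycle valued in the $\check{G}_{j}$-invariant vectors. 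Because $G$ acts transitively on $G/\Gamma$, the only $G$-invariant vectors are the constants, and irreducibility of $\Gamma$ forces each $\check{G}_{j}$ to act ergodically on $G/\Gamma$ (Moore ergodicity), so the only $\check{G}_{j}$-invariant vectors are again the constants. As each factor $G_{j}$ is simple and nonabelian, hence perfect, it carries no nonzero continuous homomorphism to $\mathbb{R}$, so every $\beta_{j}$ and $\beta_{0}$, being valued in the constants where the action is trivial, must vanish. Thus the induced cocycle is almost cohomologous to $0$; by the cohomology isomorphism of the induction map so is $\varphi$, and since $\varphi$ and $0$ are both harmonic, uniqueness of harmonic representatives (Theorem \ref{T:harmoniccocycles}) yields $\varphi = 0$, whence $\varphi_{fixed} = 0$.

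Combining the two cases gives $\varphi_{0} = 0$; as the representation was arbitrary, Theorem \ref{T:Tharmonic} delivers property $(T)$ for $\Gamma/N$. I expect the main obstacle to be precisely the fixed-vector case above: Theorem \ref{T:new} only controls representations without invariant vectors, so one must separately rule out nonzero real characters of $\Gamma/N$, and the cleanest route is to re-run the induction-and-product-decomposition argument for the trivial representation, leaning on perfectness of the simple factors together with the ergodicity of each complementary factor on $G/\Gamma$ coming from irreducibility. (Alternatively, one may simply quote that irreducible lattices in products of simple groups have finite abelianization.)
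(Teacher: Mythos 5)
Your proof is correct and follows the paper's intended route, but it is more complete than what the paper actually records: the paper's entire proof of this corollary is the single phrase ``Combining Theorem \ref{T:new} and Theorem \ref{T:Tharmonic},'' and you have correctly identified that these two results do not combine on their own. Theorem \ref{T:Tharmonic} requires the vanishing of harmonic cocycles for \emph{every} unitary representation of $\Gamma/N$, while Theorem \ref{T:new} only treats representations \emph{without} invariant vectors; the missing piece is exactly the fixed-vector part, i.e.\ ruling out nonzero continuous homomorphisms $\Gamma/N \to (\mathcal{H}_{fixed},+)$, which is a genuinely nontrivial statement (Shalom's finite-abelianization phenomenon) and is precisely what you supply. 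Your argument for it is sound: inducing the trivial representation gives a multiple of $L^{2}(G/\Gamma)$; in the decomposition of Theorem \ref{T:sha} the $G$-invariant and $\check{G}_{j}$-invariant vectors are the constants --- though for the latter you should argue via irreducibility directly (density of $\mathrm{proj}_{j}\,\Gamma$ in $G_{j}$ gives ergodicity of $\check{G}_{j}$ on $G/\Gamma$ by an elementary argument; Moore's ergodicity theorem is neither needed nor available for general simple compactly generated groups) --- and each resulting summand is then a continuous homomorphism from a topologically perfect group into an additive Hilbert space, hence zero. Two minor points: for the trivial representation the coboundaries are identically zero, so $B^{1}$ is closed and ``almost cohomologous to $0$'' already means equal to $0$, making your final appeal to harmonicity and uniqueness of harmonic representatives unnecessary; and note that both your argument and the paper's require $N$ to be nontrivial and $\Gamma$ to be integrable (hypotheses of Theorem \ref{T:new} that the corollary's statement omits) --- for $N=\{e\}$ the conclusion is in fact false, e.g.\ for irreducible lattices in $\mathrm{SL}_{2}(\mathbb{R})\times\mathrm{SL}_{2}(\mathbb{R})$.
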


\dbibliography{ReferenceList}

\end{document}